\documentclass[12pt]{amsart}
\topmargin -1.3cm
\textwidth 160mm
\textheight 230mm
\oddsidemargin 0.70cm
\evensidemargin 0.70cm
\usepackage[utf8]{inputenc}
\usepackage{graphicx}
\usepackage{epstopdf}
\usepackage{inputenc}
\usepackage{enumitem}
\usepackage{amsmath}
\usepackage{xcolor}
\usepackage{amssymb}
\usepackage{dsfont}
\newtheorem{theorem}{Theorem}
\newtheorem*{theorem*}{Theorem}
\newtheorem{definition}{Definition}

\newtheorem{corollary}{Corollary}[section]
\newtheorem{lemma}{Lemma}

\newtheorem*{acknowledgements*}{Acknowledgements}
\newtheorem{remark}{Remark}

\newcommand{\Mod}[1]{\ (\mathrm{mod}\ #1)}
\newcommand{\rad}{\text{rad}}
\makeatletter
\def\blfootnote{\gdef\@thefnmark{}\@footnotetext}
\makeatother

\def\house#1{\setbox1=\hbox{$\,#1\,$}%
\dimen1=\ht1 \advance\dimen1 by 2pt \dimen2=\dp1 \advance\dimen2 by 2pt
\setbox1=\hbox{\vrule height\dimen1 depth\dimen2\box1\vrule}%
\setbox1=\vbox{\hrule\box1}%
\advance\dimen1 by .4pt \ht1=\dimen1
\advance\dimen2 by .4pt \dp1=\dimen2 \box1\relax}

\begin{document}
\title{Correlation of multiplicative functions over function fields}
\author{Pranendu Darbar and Anirban Mukhopadhyay}

\address[Pranendu Darbar]{ISI Kolkata  \\
Baranagar, Kolkata  \\
West Bengal 700108,  India}  

  \email{darbarpranendu100@gmail.com}
  
\address[Anirban Mukhopadhyay]{Institute of Mathematical Sciences, HBNI
\\CIT Campus, Taramani\\
Chennai-600113, India}

\email{anirban@imsc.res.in}

\begin{abstract}
In this article, we study function field analogs of a result of K\'{a}tai \cite{KAT} on asymptotic behaviour of correlation of multiplicative functions. More precisely, if we set  $\mathcal{M}_{n}$ and $\mathcal{P}_{n}$ be the set of all monic polynomials and monic irreducible polynomials of degree $n$ over $\mathbb{F}_q$ respectively then for multiplicative functions $\psi_1, \psi_2 :\mathbb{F}_q[x] \rightarrow \mathbb{U}$, and $A_1, A_2\in \mathbb{F}_q[x]\setminus\{0\}$, and $h_1, h_2\in \mathbb{F}_q[x]$, we obtain asymptotic formula for the following correlation functions 
\begin{align*}
\sum_{f\in \mathcal{M}_{n}}\psi_1(A_1f+h_1) \psi_2(A_2f+h_2), \quad \text{ and } \quad
\sum_{P\in \mathcal{P}_{n}}\psi_1(P+h_1)\psi_2(P+h_2)
\end{align*}
for fixed $q$ and  sufficiently large $n$. We also find an  asymptotic formula of the first correlation function when $\psi_1$ and $\psi_2$ are so called ``Hayes pretentious'' multiplicative functions which lead us to deduce a generalized K\'{a}tai's conjecture over function field. We give a new proof  K\'{a}tai's conjecture over function fields for a multiplicative function (see Klurman et al. \cite{KLR} for different proof). We also prove  K\'{a}tai's conjecture  for pair and triplet of multiplicative functions  whose values lies on the unit circle.
As a consequence towards probabilistic interpretation, we derive the behaviour of the distribution of the sum of additive functions.
\end{abstract}
\maketitle

\section{Introduction}
Consider the polynomial ring $\mathbb{F}_{q}[x]$ over a field $\mathbb{F}_q$ with $q$ 
elements. One of the fruitful analogies in number theory is between the integers 
$\mathbb{Z}$ and the polynomial ring $\mathbb{F}_{q}[x]$.
We will introduce correlation of multiplicative functions over $\mathbb{F}_{q}[x]$ 
after highlighting few well known results of correlation of multiplicative functions 
over the integers. In subsequent
sections of the introduction, we will discuss new results in this paper.

\subsection{Correlation of multiplicative functions over intergers}
Let $f: \mathbb{N}\to \mathbb{C}$ be a multiplicative function. 
Many problem from number theory are connected with asymptotic of the mean 
\[
M_f(x):= \frac{1}{x}\sum_{n\leq x}f(n).
\]

\subsubsection*{The Distance function}
In \cite{GS1}, Granville and Soundararajan defined the ``distance" between two multiplicative functions $f, g:\mathbb{N}\to \mathbb{U}$ as
\[
\mathbb{D}(f, g; y; x):= \bigg(\sum_{y<p\leq x}\frac{1-\Re(f(p)\overline{g(p)})}{p}\bigg)^{1/2}
\]
where $\mathbb{U}=\{z\in \mathbb{C}: |z|\leq 1\}$ and in particular
$\mathbb{D}(f, g;x):= \mathbb{D}(f, g; 1; x)$. In several instances
$\mathbb{D}(f, g; \infty)$ is infinite (for example, $\mathbb{D}(1, \mu; \infty)$ is 
infinite). 
However, if $\mathbb{D}(f, g; \infty)< \infty$ then  $f$ is said to be $g$-{\it pretentious} and the case $\mathbb{D}(f, g; \infty)=\infty$ is known as $g$ {\it non-pretentious}. The most important property of this distance functions is that it satisfied the following triangle inequality:
\[
\mathbb{D}(f, g; x)\leq \mathbb{D}(f, h; x)+\mathbb{D}(h, g; x)
\]
for any functions $f, g, h:\mathbb{N}\to \mathbb{U}$. The theory of multiplicative functions get new direction and have been subsequently developed using this new approach called ``pretentious approach" in last two decades. 

\vspace{2mm}
\noindent
The following theorem of Hal\'{a}sz is one of the important theorems related to 
the asymptotic behaviour of $M_f(x)$ in terms of distance function as $x\to \infty$.

\vspace{2mm}
\noindent
\textbf{Theorem A} (Hal\'{a}sz, 1971)\label{halasz main theorem}
Let $f:\mathbb{N}\to \mathbb{U}$ be multiplicative. Then 
\[
M_f(x)=o(1)
\]
unless there exist $t\in \mathbb{R}$ such that $\mathbb{D}(f, n^{it}, \infty)< \infty$ in which case, as $x\to \infty$ we have
\[
M_f(x)=\frac{x^{it}}{1+it}\prod_{p\leq x}\bigg(1-\frac{1}{p}\bigg)\bigg(\sum_{k\geq 0}\frac{f(p^k)p^{-kit}}{p^k}\bigg)+o(1).
\]
\noindent
Quantitative improvements of Hal\'{a}sz's theorem have been obtained by several authors (for example \cite{GHS1}, \cite{GHS2}, \cite{GS1}). 
As a natural generalization of Hal\'{a}sz's theorem one would like to find asymptotic behaviour of the following  $k$-point correlation function 
\begin{align}\label{general correlation function}
M_x(g_1, \ldots, g_k):= \frac{1}{x}\sum_{n\leq x}g_1(F_1(x))\ldots g_k(F_k(x)), \quad k\geq 2
\end{align}
where $g_j$'s are multiplicative functions with modulus less than or equal to $1$ and $F_j(x)$'s are polynomials with integer coefficients. 

\subsubsection{Non-pretentious world}
 If $g_j=\lambda$,  Liouville's function and $F_j(x)=x+h_j, j=1, 2, \ldots, k$ for distinct natural numbers $h_j$'s then a famous conjecture of Chowla asserts that $M_k(x)=o(1)$ as $x\to \infty$. Chowla’s conjecture remains open for any $h_1, \ldots , h_k$ with $k \geq 2$.
 On the basis of the breakthrough work of Matomaki and Radziwill \cite{MR}, Tao \cite{Tao} proved the following two-point ($ k=2$)  logarithmic averaged Chowla and Elliott conjecture:
 \[
 \sum_{x/w(x)<n\leq x}\frac{\lambda(a_1 n+b_1)\lambda(a_2 n+b_2)}{n}=o(\log w(x)).
 \]
 In general if $g_1$ is “non-pretentious” in the sense that 
 \[
 \inf_{|t|\leq x}\mathbb{D}(g_1(n), \chi(n)n^{it}; x)\to \infty, \text{ as } x\to \infty
 \]
  for all Dirichlet characters $\chi$, then 
 \begin{align}\label{logarithmically chowla}
  \sum_{x/w(x)<n\leq x}\frac{g_1(a_1 n+b_1)g_2(a_2 n+b_2)}{n}=o(\log w(x)),
 \end{align}
where $a_1, a_2$ are natural numbers and  $b_1, b_2$ are distinct non-negative integers such
that $a_1b_2 -a_2 b_1\neq 0$, and  $1\leq w(x)\leq x$ is an arbitrary function of $x$ that goes to infinity as $x\to \infty$.

\vspace{2mm}
\noindent
In recent years, progresses  have been made on various averaged forms of Chowla’s conjecture. For instance,  Matom\"{a}ki, Radziwill and Tao \cite{MRT1} established a version of
Chowla’s conjecture where one performs some averaging in the parameters $h_1, \ldots , h_k$. Also Tao and Ter\"{a}v\"{a}inen \cite{TT} proved a structure theorem  for the logarithmically averaged correlations of multiplicative functions which leads to obtain several new cases of logarithmically averaged Chowla and Elliott conjecture for higher $k$-point correlations and in \cite{TT1}, they extend this result to some cases of the unweighted Elliott conjecture at almost all scales.
 \subsubsection{Pretentious world}
K\'{a}tai \cite{KAT} first studied the asymptotic behaviour of the 
sum \eqref{general correlation function} with some assumptions on $g_j$'s 
and $F_j(x)$'s are special polynomials but did not provide any error term. 
Stepanauskas \cite{ST4} studied the asymptotic formula for sum 
\eqref{general correlation function} with explicit error term when $F_j(x)$'s are 
linear polynomials and $g_j$ are ``close" to $1$ (which is much stronger condition than ``pretend" to $1$). 
In \cite{DAR}, the first author studied the asymptotic behaviour of the sum \eqref{general correlation function} with explicit error term when $F_j$'s are polynomial of degree $\geq 2$ and $g_j$'s  are close to $1$.

\vspace{2mm}
\noindent
In a fine work \cite{KLR1}, Klurman provided an asymptotic formula for the sum \eqref{general correlation function} for two multiplicative functions (also for the same multiplicative function) $f, g : \mathbb{N}\rightarrow \mathbb{U}$  with $\mathbb{D}(f (n), n^{it_1} \chi(n), \infty) < \infty$ and $\mathbb{D}(g(n), n^{it_2} \psi(n), \infty) < \infty$ for some primitive Dirichlet characters $\chi, \psi$. As an application of this result together with Tao's theroem \eqref{logarithmically chowla}, Klurman \cite{KLR1} proved K\'{a}tai conjecture that  if $f : \mathbb{N}\to \mathbb{S}^1$
is completely multiplicative and the consecutive values of f are close to each other in the sense
that  
\[
\sum_{n\leq x}|f(n+1)-f(n)|=o(x)
\]
then $f(n) = n^{it}$ for some real number $t$. In the same article Klurman obtained Erd\"{o}s-Tao discrepancy problem among several other results.

\subsection{Correlation of multiplicative functions over $\mathbb{F}_q[x]$}
Consider the polynomial ring $\mathbb{F}_{q}[x]$ over a field with q elements. Let $\mathcal{M}_{n}$ be the set of all monic polynomials of degree $n$ over $\mathbb{F}_q$, so that $|\mathcal{M}_{n}| =q^n$. Let $\mathcal{P}_{n}$ be the set of all monic irreducible polynomials of degree $n$ over $\mathbb{F}_q$. 

\vspace{2mm}
\noindent
Let $\psi: \mathcal{M}\to \mathbb{C}$ be a multiplicative function. 
A central theme is the asymptotic behaviour of the mean
\begin{align}\label{mean value function over function}
\sigma(n, q; \psi):= \frac{1}{q^n}\sum_{f\in \mathcal{M}_{n}}\psi(f), \quad \text{ as } q^n\to \infty.
\end{align}
\vspace{2mm}
\noindent
There are following three way to study the asymptotic behaviour of sum \eqref{mean value function over function}.
\begin{enumerate}[label=\alph*)]
\item When $n\to \infty$ and $q$ is fixed, which is called ``large degree limit",
\item When $q\to \infty$ and $n$ is fixed, which is called ``large finite field limit",
\item When both $n, q \to \infty$.
\end{enumerate}

 
\vspace{2mm}
\noindent
In \cite{GHS}, Granville et al. initiated the study of mean values of multiplicative functions over $\mathbb{F}_q[x]$ by proving the following quantitative analog of  Theorem \ref{halasz main theorem} in the large degree limit aspect.

\vspace{2mm}
\noindent
\textbf{Theorem B} (Granville et al.)
Let $\psi$ be multiplicative functions on $\mathcal{M}$ with modulus less than or equal to $1$ and $\sigma(n, q; \psi)$ be as defined \eqref{mean value function over function}. Then for all integers $n\geq 2$, we have
\[
|\sigma(n, q; \psi)|\leq 2(2+M)e^{-M},
\]
where 
$
\max_{|z|=\frac{1}{q}}|\Psi^{\perp}(z)|:= 2n e^{-M}
$ and $\Psi^{\perp}$ is corresponding power series which has truncated euler product defined in \cite{GHS} with respect to $n$. 

\vspace{2mm}
\noindent
In the large degree limit,  Klurman \cite{KLR} derived analogs of  Wirsing, Hal\'{a}sz and Hall's theorem on $\mathbb{F}_q[x]$.  Motivated by the study of correlation of multiplicative functions over integers one would also like to find the asymptotic behaviour of the following sums:
\begin{align}\label{correlation function over any polynomial}
S_{k}(n, q):=\sum_{f\in \mathcal{M}_{n}}\psi_1(A_1f+h_1)\ldots \psi_k(A_kf+h_k)
\end{align}
and 
\begin{align}\label{correlation function over prime polynomial}
R_k(n, q):=\sum_{P\in \mathcal{P}_{n}}\psi_1(P+h_1)\ldots \psi_k(P+h_k),
\end{align}
where $\psi_1, \ldots, \psi_k$ are multiplicative functions on $\mathcal{M}$ and $A_j\in \mathbb{F}_q[x]\setminus\{0\}$ and $h_j\in \mathbb{F}_q[x]$ are fixed polynomials for all $j=1, \ldots, k$.

\subsubsection{Large finite field aspect}
In the large finite field limit, one can obtain much better results what can be done in the case of integers.
For example, Carmon and Rudnick \cite{CR} proved function field analog of 
Chowla's conjecture in the large finite field limit.
Also, Bary-Soroker \cite{SORO} proved the  function field analog of the Hardy-Littlewood conjecture  over large finite fields.

\subsubsection{Large degree aspect}  A recent
groundbreaking result of Sawin and Shusterman \cite{SS} established the Chowla conjecture in function fields in the form
\[
\frac{1}{q^n}\sum_{f\in \mathcal{M}_{\leq n}}\mu(f+B_1)\ldots \mu(f+B_k)=o(1)
\]
for any $k\geq  1$ and any distinct $B_1, \ldots , B_k \in \mathbb{F}_q[x]$ in the large field case $q > p^2k^2e^2$, where $p = \text{char}(\mathbb{F}_q)$. They used geometric methods to improve on the function field version of the Burgess bound, and showed that, when restricted to certain
special subspaces, the M\"{o}bius function over $\mathbb{F}_q[x]$ can be mimicked by Dirichlet characters.

\subsubsection*{Non-pretentious world in large degree aspect} See section $2.2.4$ for the notion of ``distance" function over function fields. In contrast with the integer case, a new notion  so called ``short interval character"  (see section $2.2.2$) plays a crucial role to obtain the correlation of multiplicative function over function fields which helps to  classify wider class of multiplicative functions. 

\vspace{2mm}
\noindent
Recently, Klurman et al. \cite{KMT} proved the following two-point logarithmic averaged Chowla and Elliott conjecture over function fields : 
Assume that $\psi_1$ satisfies the {\it non-pretentiousness} assumption
\[
\min_{M\in \mathcal{M}_{\leq W}}\min_{\chi \pmod{M}}\min_{\substack{\xi \text{ short }\\ \text{len}\leq n}}\min_{\theta\in [0, 1]}\mathbb{D}^2\left(\psi_1(P), \chi(P)\xi(P)e^{2\pi i \theta \deg(P)}; N \right)\to \infty,
\]
as $N\to \infty$ for every fixed $W \geq 1$. Then for any fixed $B\in \mathbb{F}_q[x]\setminus \{0\}$,
\begin{align}\label{elliott conjecture function fields}
\frac{1}{N}\sum_{f\in \mathcal{M}_{\leq N}}\frac{\psi_1(f)\psi_2(f+B)}{q^{\deg(f)}}=o(1), \quad \text{ as }  N\to \infty,
\end{align}
 where $\mathbb{D}(\psi_1, \psi_2; N)$ is defined by \eqref{The distance function}.
 
 \vspace{2mm}
 \noindent
In particular, if $\psi_1=\psi_2=\mu$, where $\mu : \mathbb{F}_q[t] \to \{-1, 0, +1\}$ is the M\"{o}bius function, this result is viewed as two-point logarithmically averaged Chowla’s conjecture in function fields. In the same article, Klurman et al. proved K\'{a}tai conjecture over function fields using \eqref{elliott conjecture function fields} and new argument which is different from the proof in the integer setting. Also, very recently, Klurman et al. \cite{KMT2} studied the Erd\"{o}s discrepancy problem over function fields.
\subsection{Pretentious world and main results in large degree aspect}
In this article, we will study the asymptotic behaviour of sums $S_k(n, q)$ and $R_k(n, q)$
in large degree limit and various pretentious aspect.

\subsubsection{Correlation with constant function 1}
Let $\psi_j:\mathcal{M}\to \mathbb{U}$ and $\alpha_j:\mathcal{M}\to \mathbb{C}$ be multiplicative functions such that $\alpha_j=\mu*\psi_j$ for all $j=1, 2$.
For fixed polynomials $A_j\in \mathbb{F}_q[x]\setminus\{0\}$ and $h_j\in \mathbb{F}_q[x]$  for all $j=1, 2$ and $n\geq r$, we define
\begin{align}
\label{main term over monics introduction} &Q(n):=\prod_{\deg P\leq n}\upsilon_P  \quad \text{ and } \quad Q(r, n)=\prod_{r<\deg P\leq n}\upsilon_P,\\
\label{main term over monic irreducibles introduction} &Q^{'}(n):=\prod_{\deg P\leq n}\upsilon_P'  \quad \text{ and } \quad Q'(r, n)=\prod_{r<\deg P\leq n}\upsilon_P'
\end{align}
where
\[
\upsilon_P:=\underset{\substack{\left(P^{m_1},P^{m_2}\right) |(A_1h_2-A_2h_1)}}{\sum_{\substack{m_{1}=0}} ^{\infty}\sum_{\substack{m_{2}=0}}^{\infty}}\frac{\alpha_1(P^{m_1})\alpha_2(P^{m_2})}{q^{\deg ([P^{m_1}, P^{m_2}])}},
\quad \upsilon_P':=\underset{\substack{\left(P^{m_1},P^{m_2}\right) |(h_2-h_1)}}{\sum_{\substack{m_{1}=0}} ^{\infty}\sum_{\substack{m_{2}=0}}^{\infty}}\frac{\alpha_1(P^{m_1})\alpha_2(P^{m_2})}{\Phi[P^{m_1}, P^{m_2}]}
\]
and the Euler phi function over function field is defined by
\[
\Phi(f)=|f|\prod_{P|f}\left(1-\frac{1}{|P|}\right).
\] 
 \noindent
The following theorem gives the asymptotic behaviour of $S_2(n, q)$ with explicit error 
term in large degree limit in most  general situation..
\begin{theorem}\label{main theorem 1}
Let $\psi_1$ and $\psi_2$ be multiplicative functions on $\mathcal{M}$ with modulus less than or equal to $1$. Let $A_1, A_2\in \mathbb{F}_q[x]\setminus \{0\}$ and $h_1, h_2\in \mathbb{F}_q[x]$ with $\deg(h_j)<\deg(A_j)$ such that $(A_1, h_1)=(A_2, h_2)=1$ and $\Delta=A_1h_2-A_2h_1\neq 0$. Suppose that $\gamma:=\deg (\Delta)$ and $A:= \max\{d(A_1), d(A_2))\}$. Then there exists a positive absolute constant $c$ such that for all $n\geq r \geq \gamma$ and for all $\frac{1}{2} <\alpha <1$, we have
\begin{align*}
\frac{S_2(n,q)}{q^n}-Q(n) \ll  q^{d(A)}\left(\mathbb{D}(\psi_1, 1; r, n+ d(A_1))+\mathbb{D}(\psi_2, 1; r, n+d(A_2))\right)\\
+q^{(1-2\alpha)n+2(1-\alpha)A}\exp\Big(\frac{cq^{\alpha r}}{r}\Big)+(rq^r)^{-\frac{1}{2}},
\end{align*}
where $Q(n)$  and $\mathbb{D}(\psi_j, 1; r, n+d(A_j))$ are defined as in \eqref{main term over monics introduction} and \eqref{The distance function} respectively.
\end{theorem}
\noindent
The next theorem gives the asymptotic behaviour of $R_2(n, q)$ with explicit error term in large degree aspect.
\begin{theorem}\label{main theorem 2}
Let $\psi_1$ and $\psi_2$ be multiplicative functions on $\mathcal{M}$ with modulus less than or equal to $1$. Let  $h_1, h_2\in \mathbb{F}_q[x]$ such that $\Delta=h_2-h_1\neq 0$. Suppose that $\gamma:=\deg (\Delta)$. Then there exists a positive absolute constant $c$ such that for all $n\geq r \geq \gamma$ and for all $\frac{1}{2} <\alpha <1$, we have
\begin{align*}
\frac{R_2(n,q)}{|\mathcal{P}_{n}|}-Q'(n) \ll  \mathbb{D}(\psi_1, 1; r, n)+\mathbb{D}(\psi_2, 1; r, n) +n^{-A}\exp\Big(\frac{cq^{\alpha r}}{r}\Big)+(rq^r)^{-\frac{1}{2}}
\end{align*}
where $A>0$ is arbitrary constant and $Q'(n)$ and $\mathbb{D}(\psi_j, 1; r, n)$ are defined as in \eqref{main term over monic irreducibles introduction} and \eqref{The distance function} respectively. .
\end{theorem}
\begin{remark}
Notice that
\begin{align*}
 \upsilon_P= \left\{
	\begin{array}
	[c]{ll}
	\left(1-\frac{1}{|P|}\right)\left(\sum_{m=0}^{\infty}\frac{\psi_1(P^m)+\psi_2(P^m)}{|P|^m}\right)-1 & \text{if}\; \, P\nmid \Delta AC\\
	1+\sum_{m=1}^{\infty}\frac{\psi_1(P^m)-\psi_1(P^{m-1})}{|P|^m} & \text{ if }\, \, P\nmid \Delta, P\nmid A, P|C\\
	1+\sum_{m=1}^{\infty}\frac{\psi_2(P^m)-\psi_2(P^{m-1})}{|P|^m} & \text{ if }\,\, P\nmid \Delta, P| A, P\nmid C\\
	1 & \text { if }\, \, P\nmid \Delta, P|A, P|C,
	\end{array}
	\right.
\end{align*}
 and if $P| \Delta$ but $P\nmid A$ and $P\nmid C$ then 
 \[
 \upsilon_P=1+\sum_{\substack{i=1\\ P^{\Delta_P}\Vert \Delta}}^{\Delta_P}\frac{\alpha_1(P^i)\alpha_2(P^i)}{|P|^i}+\delta_A\sum_{j>i}\frac{\alpha_1(P^i)\alpha_2(P^j)}{|P|^j}+\delta_C\sum_{j>i}\frac{\alpha_1(P^j)\alpha_2(P^i)}{|P|^j},
 \]
 where $\delta_f=0$ when $P|f$ and $\delta_f=1$ otherwise, and $\Delta_P$ is an integer such that $P^{\Delta_P}\Vert \Delta$.\\
 Similar expression can be deduce for $\upsilon_P'$. 
\end{remark}
\begin{remark}
Theorem \ref{main theorem 1} and Theorem \ref{main theorem 2} can be extended for $S_k(n, q)$ and $R_k(n, q), k\geq 3$.
\end{remark}

\subsubsection{Correlation with Dirichlet character}

\begin{theorem}\label{theorem for hayes pretentious}
  Let $\psi_1, \psi_2: \mathcal{M}\to \mathbb{U}$ be multiplicative functions such that $\mathbb{D}(\psi_j, \chi_j e_{\theta_j}; \infty)<\infty$ for some primitive characters $\chi_j$ of conductor $Q_j$ and an angle $\theta_j\in [0, 1]$. Assume that $h_1$ and $h_2$  are fixed polynomials. 
   Then as $n\to \infty$,
  \begin{align*}
  &\frac{1}{|\mathcal{M}_n|} \sum_{f\in \mathcal{M}_n}\psi_1(f+h_1)\psi_2(f+h_2) \\
  &=e^{2\pi i (\theta_1+\theta_2)n}\frac{1}{|[Q_1, Q_2]|} \sideset{}{'}\sum_{\rad(f_j)\mid\, Q_j}\frac{\psi_1(f_1)\psi_2(f_2)}{|[f_1, f_2]|}e_{\theta_1}\left(\frac{f_2}{(f_1, f_2)}\right)e_{\theta_2}\left(\frac{f_1}{(f_1, f_2)}\right)e_{-(\theta_1+\theta_2)}([f_1, f_2])\\
  &\times
    \sum_{h([Q_1, Q_2])}\chi_1\left(h\frac{f_2}{(f_1, f_2)}+D_1)\right)\chi_2\left(h\frac{f_1}{(f_1, f_2)}+ D_2\right) \prod_{\substack{P\in \mathcal{P}}}v_P+ o(1), \, \text{ if } \rad(Q_1)=\rad(Q_2),
  \end{align*}
  otherwise, the sum in left hand side vanishes,
  where 
 $v_P$ are defined as in \eqref{main term over monics introduction}  with $\alpha_j= \psi_j \overline{\chi}_je_{-\theta_j}$, $\Delta=(h_1-h_2)$, $D_1, D_2$ are polynomials such that $D_1f_1-D_2f_2=\Delta$, and $\sideset{}{'}\sum$ runs over all $f_1, f_2$ such that $(f_1, f_2)\mid \, \Delta$, $\frac{Q_2}{(Q_1, Q_2)}\mid  \frac{f_1}{(f_1, f_2)}$ and $\frac{Q_1}{(Q_1, Q_2)}\mid  \frac{f_2}{(f_1, f_2)}$.
  \end{theorem}

\begin{corollary}\label{corollary on different functions}
Let $\psi_1, \psi_2: \mathcal{M}\to \mathbb{U}$ be multiplicative functions such that $\mathbb{D}(\psi_j, \chi_j e_{\theta_j}; \infty)<\infty$ for some primitive characters $\chi_j$ of conductor $Q_j$ and an angle $\theta_j\in [0, 1]$. Let $U=\frac{Q_2}{(Q_1, Q_2)}$ and $V=\frac{Q_1}{(Q_1, Q_2)}$. Then as $n\to \infty$,
\begin{align*}
  &\frac{1}{|\mathcal{M}_n|} \sum_{f\in \mathcal{M}_n}\psi_1(f)\psi_2(f+1) \\
  &=e^{2\pi i (\theta_1+\theta_2)n}e^{2\pi i (V\theta_1+U\theta_2+(\theta_1+\theta_2)UV)}\frac{1}{|[Q_1, Q_2]|\, |UV|}\psi_1(U)\psi_2(V) \\
  &\times
    \sum_{h([Q_1, Q_2])}\chi_1\left(hV+D_1)\right)\chi_2\left(hU+ D_2\right) \prod_{\substack{P\nmid \, [Q_1, Q_2]}}v_P+ o(1), \, \text{ if } \rad(Q_1)=\rad(Q_2),
  \end{align*}
  otherwise, the sum in left hand side vanishes,
  where 
  \[
v_P  =\left(1-\frac{1}{|P|}\right)\left(\sum_{m=0}^{\infty}\frac{(\psi_1 \overline{\chi}_1 e_{-\theta_1})(P^m)}{|P|^m}+\sum_{m=0}^{\infty}\frac{(\psi_2 \overline{\chi}_2 e_{-\theta_2})(P^m)}{|P|^m}\right)-1,
\]
and $D_1, D_2$ are polynomials such that $UD_1-VD_2=1$.
\end{corollary}
\begin{theorem}\label{theorem on Dirichlet character}
  Let $\psi: \mathcal{M}\to \mathbb{U}$ be multiplicative functions such that
   \[
   \mathbb{D}(\psi, \chi e_{\theta}; \infty)<\infty
   \]
    for some primitive character $\chi$ of conductor $Q$ and an angle $\theta\in [0, 1]$.
   Then as $n\to \infty$,
    \begin{align*}
   \frac{1}{|\mathcal{M}_n|} \sum_{f\in \mathcal{M}_n}\psi(f+h_1)\overline{\psi(f+h_2)}=\frac{1}{|Q|}\sum_{\substack{f\mid \, \Delta\\ \rad(f)\mid \, Q}}\frac{|\psi(f)|^2}{|f|}
    \sum_{h(Q)}\chi\left(h+D_1\right)\overline{\chi\left(h+ D_2\right)}\\
    \times \prod_{\substack{P\in \mathcal{P}}}v_P +o(1),
 \end{align*}
 where $v_P$ are defined as in \eqref{main term over monics introduction}  with $\alpha_1=\alpha_2= \psi \overline{\chi}e_{-\theta}$, $\Delta=(h_1-h_2)$, and $D_1, D_2$ are polynomials such that $(D_1-D_2)f=\Delta$.
  \end{theorem}
 
 \subsubsection{Correlation with Hayes character}
 \begin{theorem}\label{main theorem on hayes pretentiousness}
 Assume that $1\leq l\leq \lceil \frac{n-1}{2}\rceil$.  Let $\psi: \mathcal{M}\to \mathbb{U}$ be multiplicative functions such that
   \[
   \mathbb{D}(\psi, \chi \xi e_{\theta}; \infty)<\infty,
   \]
    where $\chi$ is a primitive Dirichlet character of conductor $Q$, $\xi$ is a short interval character of length $l$, and an angle $\theta\in [0, 1]$. Then for fixed polynomials $h_1, h_2\in \mathbb{F}_q[x]$ with $\deg(h_j)\leq l$, as $n\to \infty$,
    \begin{align*}
   \frac{1}{|\mathcal{M}_n|} \sum_{f\in \mathcal{M}_n}\psi(f+h_1)\overline{\psi(f+h_2)}=\frac{1}{|Q|}\sum_{\substack{f\mid \, \Delta\\ \rad(f)\mid \, Q}}\frac{|\psi(f)|^2}{|f|}
    \sum_{h(Q)}\chi\left(h+D_1\right)\overline{\chi\left(h+ D_2\right)}\\
    \times \prod_{\substack{P\in \mathcal{P}}}v_P +o(1),
    \end{align*}
    where $v_P$ are defined as in \eqref{main term over monics introduction}  with $\alpha_1=\alpha_2= \psi \overline{\chi}\overline{\xi}e_{-\theta}$, $\Delta=(h_1-h_2)$ and $D_1, D_2$ are polynomials such that $(D_1-D_2)f=\Delta$.
  \end{theorem}
\subsection{Applications}
\subsubsection{Outcomes of Theorem \ref{main theorem 1} and Theorem \ref{main theorem 2}}
The following corollary is a direct application of Theorem \ref{main theorem 1} and Theorem \ref{main theorem 2}.
\begin{corollary}
 Assume the hypothesis of Theorem \ref{main theorem 1} and Theorem \ref{main theorem 2}. Suppose that $\psi_1$ and $\psi_2$ are {\it pretend} to $1$. Then as $n\to \infty$,
\[
\frac{S_2(n, q)}{q^n}\to \prod_{P\in \mathcal{P}}v_P \quad \text{ and } \quad \frac{R_2(n, q)}{|\mathcal{P}_{n}|}\to \prod_{P\in \mathcal{P}} v_P'
\]
where $v_P$ and $v_P'$ are defined as in \eqref{main term over monics introduction} and \eqref{main term over monic irreducibles introduction} respectively.
\end{corollary}
\noindent
We define the truncated Liouville function over function field by
\[
\lambda_{y}(P^{t})=
\begin{cases}
(-1)^{t} \quad (=\lambda(P^t)) \ & \text{if } \deg P\leq y \\
1 \ & \text{if } \deg P>y .
\end{cases} 
\]
It is very interesting to establish
\[
\sum_{f \in \mathcal{M}_{n}}\lambda_y(f)\lambda_y(f+h)=o(q^n), \quad \text{ as } n\to \infty.
\]
Note that, if $y=n$ then it is Chowla's conjecture over function fields in large degree limit. 

\noindent
For very small choice of $y$ the following theorem gives a truncated variant of Chowla's conjecture in large degree limit which is an application of Theorem \ref{main theorem 1}.
\begin{corollary}\label{trancated liouville function}
There is a positive absolute constant $C$ such that if $n\geq 2$, $2\leq y\leq \log n$ and fixed $h\in \mathbb{F}_q[x]$ with $\deg h\leq y$,
then
\[
\bigg|\sum_{f\in \mathcal{M}_{n}}\lambda_y(f)\lambda_y(f+h)\bigg|<C\frac{\log ^4y}{y^4}q^n.
\]
\end{corollary}
\begin{remark}
Mangerel \cite{Mangerel} proved a number field analog of the above theorem with wide range. Following this it may be possible to extend the range of $y$ satisfying  $\frac{n}{y}\to \infty$. 
\end{remark}
\vspace{2mm}
\noindent
Let $\mathbb{F}_q^{*}$ be the group of units in $A:=\mathbb{F}_q[x]$.
Let $\mathcal F_k$ be the set of monic polynomials in $\mathbb{F}_q[x]$ which are
$k$-th power free. 
As a direct application of Theorem \ref{main theorem 1}, we get an asymptotic formula for two simultaneously $k$-free
monic polynomials.
\begin{corollary}
Let $a \in \mathbb{F}_q^{*}$ . Then we have
\[
\frac{1}{q^n}\sum_{\substack{f\in \mathcal{M}_{n} \\ f, \, f+a\in \mathcal{F}_k}}1 =\prod_{P}\bigg(1-\frac{2}{q^{k\deg P}}\bigg)+ O\left(\frac{1}{n^B}\right)
\]
for any $0<B<1$. 
\end{corollary}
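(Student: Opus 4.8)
The plan is to realize both factors as the indicator of the $k$-free property and then to invoke Theorem \ref{main theorem 1}. Take $\psi_1=\psi_2=\psi$, where $\psi(f)=1$ if $f$ is $k$-free and $\psi(f)=0$ otherwise; equivalently $\psi=1\ast\alpha$ with $\alpha$ multiplicative and $\alpha(P^m)=\psi(P^m)-\psi(P^{m-1})$, so that $\alpha(P^k)=-1$ and $\alpha(P^m)=0$ for $m\notin\{0,k\}$. Choosing $h_1=0$ and $h_2=a$ (so $h_2-h_1=a\in\mathbb{F}_q^{*}$), the sum $S_2(n,q)=\sum_{f\in\mathcal{M}_{n,q}}\psi(f)\psi(f+a)$ counts exactly the $f$ for which both $f$ and $f+a$ are $k$-free, which is the quantity in the corollary (interpreting the condition $f\in\mathcal{F}_k$ as requiring both $f$ and $f+a$ to be $k$-free). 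Here $\psi$ has modulus $\le 1$, and since every irreducible $P$ is $k$-free for $k\ge 2$ we have $\psi(P)=1$; hence $\psi$ is close to $1$ and, decisively, $\mathbb{D}(\psi,1;r,n)=0$ for every $r\le n$, so the two distance terms in Theorem \ref{main theorem 1} vanish identically.

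First I would evaluate the main term $P(n)=P_1(\gamma)P_2(\gamma,n)$. In $P_2(\gamma,n)$ the only surviving inner index is $m=k$, giving $\sum_{j=1}^{2}\sum_{m\ge 1}\frac{\psi_j(P^m)-\psi_j(P^{m-1})}{q^{m\deg P}}=-2/q^{k\deg P}$, whence $P_2(\gamma,n)=\prod_{\gamma<\deg P\le n}\bigl(1-2/q^{k\deg P}\bigr)$. For $P_1(\gamma)$ the key simplification is that $a$ is a unit, so the restriction $(f,g)\mid(h_2-h_1)=a$ collapses to $(f,g)=1$; then $\deg[f,g]=\deg f+\deg g$, and since $\alpha$ is supported on $k$-th powers of squarefree polynomials I may write $f=u^k,\ g=v^k$ with $u,v$ squarefree and coprime, $\alpha(f)=\mu(u),\ \alpha(g)=\mu(v)$. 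The resulting double sum factors as an Euler product over $\deg P\le\gamma$ whose local factor is $1-2/q^{k\deg P}$ (each $P$ may lie in $u$, in $v$, or in neither, but not both), so $P_1(\gamma)=\prod_{\deg P\le\gamma}\bigl(1-2/q^{k\deg P}\bigr)$. Multiplying,
\[
P(n)=\prod_{\deg P\le n}\Bigl(1-\frac{2}{q^{k\deg P}}\Bigr),
\]
and extending the product to all $P$ costs only the tail $\prod_{\deg P>n}(1-2/q^{k\deg P})=1+O(q^{-n})$, using $|\mathcal{P}_{d,q}|\asymp q^d/d$ together with $k\ge 2$; thus $P(n)=\prod_{P}(1-2/q^{k\deg P})+O(q^{-n})$.

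It remains to optimize the error. With the distance terms gone, Theorem \ref{main theorem 1} yields
\[
\frac{S_2(n,q)}{q^n}-P(n)\ \ll\ q^{(1-2\alpha)n}\exp\Bigl(\frac{cq^{\alpha r}}{r}\Bigr)+(rq^r)^{-1/2}.
\]
Given $B\in(0,1)$, choose $r$ with $q^r\asymp n^{c'}$ for $c'=2B$ and put $\alpha=\tfrac12+\epsilon$. Then $(rq^r)^{-1/2}\ll n^{-c'/2}=n^{-B}$, while $q^{\alpha r}\asymp n^{\alpha c'}$; taking $\epsilon<(1-B)/(2B)$ forces $\alpha c'<1$, so that the middle term equals $\exp\bigl(-(2\epsilon\log q)\,n+O(n^{\alpha c'}/\log n)\bigr)$, which decays faster than any power of $n$. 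Combined with the $O(q^{-n})$ from the main term this gives the stated $O(n^{-B})$. The main obstacle is precisely this joint choice of parameters: one must drive $r$ up like $\log_q n$ to tame $(rq^r)^{-1/2}$, yet keep $\alpha c'<1$ so that the genuinely small factor $q^{(1-2\alpha)n}$ continues to beat the doubly-exponential-looking $\exp(cq^{\alpha r}/r)$; the remaining computational heart is the Euler-product evaluation of $P_1(\gamma)$, which hinges on the gcd restriction collapsing to coprimality because $a$ is a unit.
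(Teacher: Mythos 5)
Your argument is correct and is exactly the ``direct application of Theorem \ref{main theorem 1}'' that the paper asserts without writing out: the same choice $\psi_1=\psi_2=$ the indicator of $k$-free polynomials (so both distance terms vanish), the same Euler-product evaluation of $P_1(\gamma)$ and $P_2(\gamma,n)$ using that $(f,g)\mid a$ forces $(f,g)=1$, and the same parameter choice $q^r\asymp n^{2B}$ with $\alpha$ slightly above $\tfrac12$ to make the remaining error $O(n^{-B})$. Nothing further is needed.
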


\noindent
We can also apply Theorem \ref{main theorem 1} and Theorem \ref{main theorem 2} to $ \frac{\Phi(f)}{|f|}$.
\begin{corollary}
For a fixed $a\in \mathbb{F}_q^{*}$ we have
\[
\frac{1}{q^n}\sum_{f\in \mathcal{M}_{n}}\frac{\Phi(f)\Phi(f+a)}{|f||f+a|}=\prod_{P}\left(1-\frac{2}{q^{2\deg P}}\right)+O\left(\frac{1}{n^B}\right)
\]
and
\[
\frac{1}{|\mathcal{P}_{n}|}\sum_{P\in \mathcal{P}_{n}}\frac{\Phi(P)\Phi(P+a)}{|P||P+a|}= \prod_{P}\bigg(1-\frac{2}{q^{\deg P}(q^{\deg P}-1)}\bigg)+O\bigg(\frac{1}{(\log n)^B}\bigg)
\]
for any $0<B<1$.
\end{corollary}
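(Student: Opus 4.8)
The plan is to apply Theorem~\ref{main theorem 1} and Theorem~\ref{main theorem 2} to the single multiplicative function
\[
\psi(f):=\frac{\Phi(f)}{|f|}=\prod_{P\mid f}\Bigl(1-\frac1{|P|}\Bigr),
\]
used in both slots, with $h_1=0$ and $h_2=a$. First I would record the local values of $\psi$. Since $\Phi(P^m)=|P|^m-|P|^{m-1}$, one gets $\psi(P^m)=1-q^{-\deg P}$ for every $m\ge 1$; hence $|\psi|\le 1$ on $\mathcal M_q$, and $\psi$ is close to $1$ because $\psi(P)-1=-q^{-\deg P}$ gives $\sum_P(\psi(P)-1)q^{-\deg P}=-\sum_P q^{-2\deg P}<\infty$ by the prime polynomial theorem. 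As $\frac{\Phi(f)\Phi(f+a)}{|f||f+a|}=\psi(f)\psi(f+a)$, the two displayed sums are precisely $S_2(n,q)$ and $R_2(n,q)$ for this data, so both theorems apply verbatim.

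Next I would evaluate the main terms $P(n)$ and $P'(n)$. With $\alpha:=\mu\ast\psi$ (so that $\alpha_1=\alpha_2=\alpha$), the identity $\alpha(P^m)=\psi(P^m)-\psi(P^{m-1})$ gives $\alpha(P)=-q^{-\deg P}$ and $\alpha(P^m)=0$ for $m\ge 2$, so $\alpha$ is supported on squarefree polynomials. The decisive simplification is that $h_2-h_1=a\in\mathbb F_q^{*}$ is a unit, so the constraint $(f,g)\mid(h_2-h_1)$ appearing in $P_1(\gamma)$ and $P_1'(\gamma)$ collapses to $(f,g)=1$; then $[f,g]=fg$, $\Phi([f,g])=\Phi(f)\Phi(g)$, and both $P_1,P_1'$ factor as Euler products over the primes of degree $\le\gamma$, with local factors $1+2\alpha(P)/|P|=1-2q^{-2\deg P}$ and $1+2\alpha(P)/\Phi(P)=1-\tfrac{2}{q^{\deg P}(q^{\deg P}-1)}$ respectively. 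Evaluating \eqref{definition of 2nd part of main term over monics} and \eqref{definition of 2nd part of main term over irreducibles} with $\psi(P^m)=1-q^{-\deg P}$ produces exactly the same local factors over $\gamma<\deg P\le n$. Thus $P(n)=\prod_{\deg P\le n}\bigl(1-2q^{-2\deg P}\bigr)$ and $P'(n)=\prod_{\deg P\le n}\bigl(1-\tfrac{2}{q^{\deg P}(q^{\deg P}-1)}\bigr)$, both independent of $\gamma$, and each converges to the stated infinite product as $n\to\infty$.

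Finally I would control the error terms. Since $1-\psi(P)=q^{-\deg P}$, the distance obeys $\mathbb D(\psi,1;r,n)^2=\sum_{r\le\deg P\le n}q^{-2\deg P}\ll (rq^{r})^{-1}$, so $\mathbb D(\psi,1;r,n)\ll (rq^{r})^{-1/2}$, of the same size as the explicit tail term $(rq^{r})^{-1/2}$. For the monic sum I would choose $r$ with $q^{r}\asymp n^{2B}$ and $\alpha>\tfrac12$ close enough to $\tfrac12$ that $2B\alpha<1$ (possible for every $0<B<1$); then $(rq^{r})^{-1/2}\ll n^{-B}$, while $q^{(1-2\alpha)n}\exp(cq^{\alpha r}/r)$ is negligible since $q^{\alpha r}\asymp n^{2B\alpha}=o(n)$ is overwhelmed by the genuine decay $q^{(1-2\alpha)n}$. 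For the prime sum the middle factor $n^{-A}$ decays only polynomially, so I can afford only $r$ with $q^{r}\asymp(\log n)^{2B}$, i.e.\ $r\asymp\log\log n$; then $n^{-A}\exp(cq^{\alpha r}/r)=n^{-A+o(1)}$ is negligible and the dominant error is $(rq^{r})^{-1/2}\asymp(\log n)^{-B}$. Both choices satisfy $n\ge r\ge\gamma$ for large $n$, since $\gamma$ is fixed.

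The Euler-product bookkeeping for the main terms is routine; the delicate point, and the main obstacle, is the error optimization, where the super-exponential factor $\exp(cq^{\alpha r}/r)$ must be dominated by $q^{(1-2\alpha)n}$ in the monic case and by $n^{-A}$ in the prime case. Keeping $q^{\alpha r}=o(n)$ (respectively $q^{\alpha r}=o(\log n)$) is the binding constraint: it caps the admissible size of $r$, and hence how small the distance and tail terms $(rq^{r})^{-1/2}$ can be made, which is exactly the mechanism producing the two different error rates $n^{-B}$ and $(\log n)^{-B}$.
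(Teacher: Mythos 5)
Your proposal is correct and follows exactly the route the paper intends (the paper states this corollary as a direct application of Theorems \ref{main theorem 1} and \ref{main theorem 2} and omits the details): taking $\psi_1=\psi_2=\Phi(\cdot)/|\cdot|$, $h_1=0$, $h_2=a$, your local computations of $P_1,P_1',P_2,P_2'$ and the resulting Euler products all check out, as does the verification that $\psi$ is close to $1$ with $\mathbb D(\psi,1;r,n)\ll(rq^r)^{-1/2}$. Your optimization of $r$ (namely $q^r\asymp n^{2B}$ for the monic sum and $q^r\asymp(\log n)^{2B}$ for the prime sum, with $\alpha>1/2$ chosen so that $2B\alpha<1$) correctly produces the two stated error rates $n^{-B}$ and $(\log n)^{-B}$.
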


\subsubsection{Outcomes of Theorem \ref{main theorem on hayes pretentiousness}}
\begin{corollary}\label{main theorem on hayes character}
  Let $\psi: \mathcal{M}\to \mathbb{U}$ be multiplicative functions such that
   \[
   \mathbb{D}(\psi, \chi \xi e_{\theta}; \infty)<\infty,
   \]
    where $\chi$ is a primitive Dirichlet character of conductor $Q$, $\xi$ is a short interval character of length $l$, and an angle $\theta\in [0, 1]$. Then as $n\to \infty$,
    \[
   \frac{1}{|\mathcal{M}_n|} \sum_{f\in \mathcal{M}_n}\psi(f)\overline{\psi(f+1)}=\frac{\mu(Q)}{|Q|}\prod_{P\, \nmid \, Q}\bigg(2\left(1-\frac{1}{|P|}\right)\bigg(\sum_{m=0}^{\infty}\frac{\Re\left((\psi \overline{\chi} \overline{\xi} e_{-\theta})(P^m)\right)}{|P|^m}\bigg)-1\bigg) +o(1).
    \]
  \end{corollary}

\begin{theorem}[K\'{a}tai Conjecture]\label{Katai conjecture}
     Let $\psi: \mathcal{M}\to \mathbb{S}^1$ be a completely multiplicative function. Suppose that as $n \to \infty$,
     \[
     \sum_{f\in \mathcal{M}_n}|\psi(f+1)-\psi(f)|=o(q^n).
     \]
      Then there exists an angle $\theta\in [0, 1)$ and a short interval character $\xi : \mathcal{M} \to \mathbb{S}^1$ such that
$\psi(f) = \xi(f)e^{2\pi i \theta \deg(f)}$.
     \end{theorem}
     
      \begin{corollary}[K\'{a}tai conjecture for pairs]\label{Katai conjectiure for pair}
      Let $\psi, \eta: \mathcal{M}\to \mathbb{S}^1$ be a completely multiplicative function. Suppose that as $n \to \infty$,
     \[
     \sum_{f\in \mathcal{M}_n}|\psi(f+1)-\eta(f)|=o(q^n).
     \]
      Then there exists an angle $\theta\in [0, 1)$ and a short interval character $\xi : \mathcal{M} \to \mathbb{S}^1$ such that 
$\psi(f)=\eta(f) = \xi(f)e^{2\pi i \theta \deg(f)}$.
     \end{corollary}
     \noindent
    As a direct application of Corollary \ref{Katai conjectiure for pair} we have the following.
    \begin{corollary}[K\'{a}tai conjecture for triplets]
     Let $\psi, \eta, \kappa : \mathcal{M}\to \mathbb{S}^1$ be a completely multiplicative function. Suppose that as $n \to \infty$,
     \[
     \sum_{f\in \mathcal{M}_n}|\psi(f+2)-2\eta(f+1)-\kappa(f)|=o(q^n).
     \]
      Then there exists an angle $\theta\in [0, 1)$ and a short interval character $\xi : \mathcal{M} \to \mathbb{S}^1$ such that 
$\psi(f)=\eta(f) = \kappa(f)= \xi(f)e^{2\pi i \theta \deg(f)}$.
    \end{corollary}
    
\subsubsection{Probabilistic viewpoint over $\mathbb{F}_q[x]$}
The asymptotic behaviour of the $k$-point correlation functions $S_k(n, q)$ and $R_k(n, q)$ over $\mathbb{F}_q[x]$ are used to get the behaviour of the distribution of the sum
\begin{align*}\label{general sum of additive functions over integers}
\eta_1(A_1f+h_1)+\ldots +\eta_k(A_2f+h_2),
\end{align*}
and \[
\eta(P+h_1)+\ldots+\eta(P+h_k)
\]
where $\eta_1, \ldots, \eta_k$ are real-valued additive functions, $A_j\in \mathbb{F}_q[x]\setminus \{0\}$ and $h_j\in \mathbb{F}_q[x]$.

\begin{theorem}\label{distribution of additive function over function field introduction}
Let $t, x \in \mathbb{R}$.   Let $A_1, A_2\in \mathbb{F}_q[x]\setminus \{0\}$ and $h_1, h_2\in \mathbb{F}_q[x]$ such that $(A_1, h_1)=(A_2, h_2)=1$ and $\Delta=A_1h_2-A_2h_1\neq 0$.  Assume that $\eta_1$ and $\eta_2$  be real-valued additive functions on $\mathcal{M}$ such that following series converge:
\begin{align*}
\sum_{\substack{|\eta_i(P)|\leq 1}}\frac{\eta_i(P)}{q^{\deg P}},
\quad \quad 
\sum_{\substack{|\eta_1(P)|\leq 1\\ |\eta_2(P)|\leq 1}}
\frac{\eta_1(P)+\eta_2(P)}{q^{\deg P}}, \quad
\sum_{\substack{|\eta_i(P)|> 1}}q^{-\deg P}
\quad \forall i=1, 2.
\end{align*}
Then the distribution functions 
\[
\frac{1}{|\mathcal{M}_{n}|}\Big|\left\{f\in \mathcal{M}_{n}: \, \eta_1(A_1f+h_1)+\eta_2(A_2f+h_2)\leq x\right\}\Big|
\] 
and 
\[
\frac{1}{|\mathcal{P}_{n}|}\Big|\left\{P\in \mathcal{P}_{n}: \, \eta_1(P+h_1)+\eta_2(P+h_2)\leq x \right\}\Big|
\]
converges weakly towards a limit distribution whose characteristic functions  are equal to $ \prod_{P\in \mathcal{P}}v_P$ and $\prod_{P\in \mathcal{P}}v'_P$
where $v_P$ and $v_P'$ are defined as in \eqref{main term over monics introduction} and \eqref{main term over monic irreducibles introduction} respectively with $\psi_j$ replaced by $\exp(it\eta_j), \forall j=1, 2$.
\end{theorem}

\noindent
As a direct consequence of Theorem \ref{distribution of additive function over function field introduction}, we get the following corollary.
\begin{corollary}
Let $z, t \in \mathbb{R}$ and $a \in \mathbb{F}_q^{*}$.
The distribution functions
\[
\frac{1}{|\mathcal{M}_{n}|}\Big|\{f \in \mathcal{M}_{n}: \frac{\Phi(f)\Phi(f+a)}{|f||f+a|}\leq e^{z}\Big| \] 
and 
 \[ \frac{1}{|\mathcal{P}_{n}|}\Big|\{P \in \mathcal{P}_{n}: \frac{\Phi(P)\Phi(P+a)}{|P||P+a|}\leq e^{z}\Big|
\]
converge weakly towards limit distributions. The characteristic functions of these limit distributions are
\[
\prod_{\deg P}\bigg(1+\frac{2\left(\left(1-q^{-\deg P}\right)^{it}-1\right)}{q^{\deg P}}\bigg) \quad \text{ and } \quad \prod_{\deg P}\bigg(1+\frac{2}{q^{\deg(P)}-1}\bigg(\left(1-q^{-\deg P}\right)^{it}-1\bigg)\bigg)
\]
respectively.
\end{corollary}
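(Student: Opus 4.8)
The plan is to recognize $\Phi(f)/|f|$ as the exponential of a real-valued additive function and then invoke Theorems~\ref{distribution of additive function} and~\ref{distribution of additive function over primes} directly. Define
\[
\tilde\psi(f):=\log\big(\Phi(f)/|f|\big)=\sum_{P\mid f}\log\big(1-q^{-\deg P}\big);
\]
since coprime polynomials have disjoint sets of prime factors, $\tilde\psi$ is additive on $\mathcal M_q$. Taking $\tilde\psi_1=\tilde\psi_2=\tilde\psi$, $h_1=0$ and $h_2=a$ (so that $h_2-h_1=a\in\mathbb F_q^{*}$ is a unit), we have
\[
\tilde\psi_1(f+h_1)+\tilde\psi_2(f+h_2)=\log\frac{\Phi(f)\,\Phi(f+a)}{|f|\,|f+a|},
\]
so the event $\tfrac{\Phi(f)\Phi(f+a)}{|f||f+a|}\le e^{z}$ is exactly $\tilde\psi_1(f+h_1)+\tilde\psi_2(f+h_2)\le z$, and likewise over $\mathcal P_{n,q}$. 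Thus the weak convergence is immediate from the two theorems, and it remains only to evaluate the characteristic functions $\phi(t)=P_1(\gamma)P_2(\gamma)$ and $\phi'(t)=P_1'(\gamma)P_2'(\gamma)$ for this particular $\tilde\psi$.

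Before that I would verify the hypotheses. The crucial elementary observation is that $\Phi(P^m)/|P^m|=1-q^{-\deg P}$ for every $m\ge1$, independent of the exponent, so that
\[
\tilde\psi(P^m)=\log\big(1-q^{-\deg P}\big)\qquad(m\ge1).
\]
In particular $\big|\tilde\psi(P)\big|=\big|\log(1-q^{-\deg P})\big|\le\log 2<1$ for all $P$ (as $q\ge2$, $\deg P\ge1$), so the third series in the hypothesis of Theorem~\ref{distribution of additive function}, ranging over $|\tilde\psi_i(P)|>1$, is empty. For the remaining two series one uses $\log(1-q^{-\deg P})=-q^{-\deg P}+O(q^{-2\deg P})$ together with the prime polynomial count $|\mathcal P_{d,q}|=q^{d}/d+O(q^{d/2}/d)$; this reduces convergence to that of $\sum_d d^{-1}q^{-d}$, which holds. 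The same estimates verify the hypotheses of Theorem~\ref{distribution of additive function over primes}.

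For the characteristic functions I would compute the Euler factors of $P_1,P_2$ (resp.\ $P_1',P_2'$) with $\psi_j$ replaced by $\exp(it\tilde\psi)$. Writing $u_P:=(1-q^{-\deg P})^{it}=\exp(it\tilde\psi(P))$, the key simplification is that $\psi_j(P^m)=u_P$ for all $m\ge1$, so the telescoping differences collapse:
\[
\psi_j(P^m)-\psi_j(P^{m-1})=
\begin{cases}u_P-1,&m=1,\\ 0,&m\ge2.\end{cases}
\]
Hence each factor of $P_2(\gamma)$ becomes $1+\tfrac{2(u_P-1)}{q^{\deg P}}$, the product being over $\deg P>\gamma$. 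In $P_1(\gamma)$ the condition $(f,g)\mid(h_2-h_1)=a$ forces $(f,g)=1$, so $[f,g]=fg$ and the sum factors as an Euler product over $\deg P\le\gamma$; with the difference coefficients $\alpha_j(P^m)=\psi_j(P^m)-\psi_j(P^{m-1})$ vanishing for $m\ge2$, each local factor is again $1+\tfrac{2(u_P-1)}{q^{\deg P}}$. Multiplying the two ranges gives $\phi(t)=\prod_P\big(1+\tfrac{2(u_P-1)}{q^{\deg P}}\big)$, as claimed.

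The prime case is analogous but uses $\Phi$ in place of $|\cdot|$. For $P_2'(\gamma)$ one has $\sum_{k\ge1}\psi_j(P^k)q^{-k\deg P}=u_P\cdot\frac{q^{-\deg P}}{1-q^{-\deg P}}=u_P/\Phi(P)$ since $\Phi(P)=q^{\deg P}-1$, so each factor becomes $1-\tfrac{2}{\Phi(P)}+\tfrac{2u_P}{\Phi(P)}=1+\tfrac{2}{\Phi(P)}(u_P-1)$. In $P_1'(\gamma)$ coprimality again gives $\Phi([f,g])=\Phi(f)\Phi(g)$, and the same vanishing of $\alpha_j(P^m)$ for $m\ge2$ together with $\alpha_j(P)/\Phi(P)=(u_P-1)/\Phi(P)$ yields the local factor $1+\tfrac{2}{\Phi(P)}(u_P-1)$. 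Hence $\phi'(t)=\prod_P\big(1+\tfrac{2}{\Phi(P)}(u_P-1)\big)$. I expect the only genuine subtlety to be bookkeeping: confirming that the $\deg P\le\gamma$ factors supplied by $P_1$ (resp.\ $P_1'$) match exactly the $\deg P>\gamma$ factors supplied by $P_2$ (resp.\ $P_2'$), so that the dependence on the artificial cutoff $\gamma$ cancels and one obtains a clean product over all $P$; the convergence of that infinite product is guaranteed by the hypothesis-verification step.
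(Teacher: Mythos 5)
Your proposal is correct and follows exactly the route the paper intends: the paper states this corollary as a direct consequence of Theorems \ref{distribution of additive function} and \ref{distribution of additive function over primes} with the strongly additive function $\tilde\psi=\log(\Phi/|\cdot|)$, and gives no further details. Your verification of the hypotheses (using $|\log(1-q^{-\deg P})|\le\log 2$ so the third series is empty) and your computation of the Euler factors via $\psi_j(P^m)=(1-q^{-\deg P})^{it}$ for all $m\ge 1$, together with the observation that $(f,g)\mid a$ forces coprimality so that $P_1,P_1'$ factor as Euler products matching $P_2,P_2'$ on the complementary range of degrees, correctly supplies the details the paper omits.
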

\section{Preliminaries}
\subsection{Notation} We start by fixing a finite field $\mathbb{F}_{q}$ of odd cardinality $q=p^r$, $r\geq 1$ with a prime $p$. We denote by $\mathbb{A}=\mathbb{F}_{q}[x]$ the polynomial ring over $\mathbb{F}_{q}$.
 For a polynomial $f$ in $\mathbb{F}_{q}[x]$, it's degree will be denoted by either $\deg(f)$ or $d(f)$. 
 
 \vspace{2mm}
\noindent 
 The set of all monic polynomials and monic irreducible polynomials of degree $n$ are denoted by $\mathcal{M}_{n,q}$ (or simply $\mathcal{M}_{n}$ as we fix $q$) and $\mathcal{P}_{n, q}$ (or simply $\mathcal{P}_{n}$) respectively. Let $\mathcal{M}=\cup_{n\geq 1} \mathcal{M}_{n}$ and $\mathcal{P}=\cup_{n\geq 1} \mathcal{P}_{n}$. we also denote the set of all monic polynomials and monic irreducible polynomials of degree less or equal to $n$ by $\mathcal{M}_{\leq n,q}$ (or simply $\mathcal{M}_{\leq n}$) and $\mathcal{P}_{\leq n,q}$ (or simply $\mathcal{P}_{\leq n}$) respectively. 
Let $\mathcal{H}_{n}$ denotes the set of monic square-free polynomials of degree $n$. Observe that for $n\geq 1$, $|\mathcal{M}_{n}|=q^{n}$.  If $f$ is a non-zero polynomial $\mathbb{F}_{q}[t]$, we define the norm of $f$ to be $|f|=q^{d(f)}$. If $f=0$, we set $|f|=0$.

\vspace{2mm}
\noindent
Given polynomials $f, g\in \mathbb{F}_q[x]\setminus \{0\}$, their greatest common divisor is denoted by $(f, g)$ and least common multiple is denoted by $[f, g]$ and defined by $[f, g]=\frac{fg}{(f, g)}$. For a polynomial $f\in \mathbb{F}_q[x]$, we write $e_{\theta}(f):=e(\theta \deg(f))=e^{2\pi i \theta \deg(f)}$. Further, we use $\mathbb{U}:= \{z\in \mathbb{C}: \, |z|\leq 1\}$ and $\mathbb{S}^1:=\{z\in \mathbb{U}:\, |z|=1\}$.

\subsection{Background on Function fields}
We say that $\psi: \mathcal{M}\to \mathbb{C}$ is {\it multiplicative} if $\psi(fg)=\psi(f)\psi(g)$ whenever $(f, g)=1$ and {\it additive} if $\psi(fg)=\psi(f)+\psi(g)$ whenever $(f, g)=1$.

 \subsubsection{Short intervals over function fields}
 Let $B\in \mathbb{F}_q[x]$. For $l\geq 1$, define
 \[
 I(B; l):= \{f\in \mathcal{M}: \, \deg(f-B)< l\}.
 \]
 In other words, 
 \[
 I(B; l)=B+ \widetilde{\mathcal{P}}_{\leq l-1},
 \]
 where $\widetilde{\mathcal{P}}_{\leq l}=\{g\in \mathbb{F}_q[x]: d(g)\leq l\}$. Hence $\#I(B; l)=q^{l}$.
 
 \vspace{2mm}
 \noindent
 Note that for $B$ monic, the interval $I(B; l)$ consists of only monic polynomials. Also, all monic polynomials of degree $n$ are contained in one of the intervals $I(B; l)$ with $B$ monic of degree $n$. Moreover, for $B_1, B_2\in \mathcal{M}_n$ and $l<n$,
 \[
 I(B_1; l)\cap I(B_2; l)\neq \emptyset \Leftrightarrow d(B_1-B_2)< l \Leftrightarrow I(B_1; l)=I(B_2; l).
 \]
 Therefore, we get a partition of $\mathcal{M}_n$ into disjoint intervals parameterized by $B\in \mathcal{M}_n$:
 \begin{align}\label{disjoint intervals}
 \mathcal{M}_n=\bigsqcup_{B\in \mathcal{B}}I(B; l),
 \end{align}
 where $\mathcal{B}=\{B=t^n+b_{n-1}t^{n-1}+\ldots + b_{l}t^{l}: \, b_j\in \mathbb{F}_q\}$.
 
 \subsubsection{Hayes characters}  Let $l\geq 1$ and $Q\in \mathcal{M}$. Define a relation $\mathcal{R}_{Q, l}$ on $\mathcal{M}$ as follows: if $A, B\in \mathcal{M}$ then 
 \begin{align*}
 A\equiv B \pmod{\mathcal{R}_{Q, l}} \text{ if and only if } A\equiv B \pmod{Q} \text{ and }\\
 \text{ the leading } l+1 \text{ coefficients of } A \text{ and }
 B \text{ are the same.}
  \end{align*}
  If $A, B\in \mathcal{M}_n$ then the  later condition  is equivalent to $\deg(A-B)< N-l$.
  
  \vspace{2mm}
\noindent
  An element of $\mathcal{M}$ is invertible $\Mod{\mathcal{R}_{Q, l}}$ if and only if it is co-prime to $Q$. The units of $\mathcal{M}/ \mathcal{R}_{Q, l}$ form an abelian group, denoted by $\left(\mathcal{M}/ \mathcal{R}_{Q, l}\right)^{\times}$. Thus the characters of $\left(\mathcal{M}/ \mathcal{R}_{Q, l}\right)^{\times}$ can be extended to $\mathcal{M}$ be defining them to be zero on non-unit elements. These extentions are Hayes characters (for more details, see \cite{KMT}). 
  Define,
  \[
  G( \mathcal{R}_{Q, l})=\Big\{\widetilde{\chi}: \widetilde{\chi}\in  \widehat{\left(\mathcal{M}/\mathcal{R}_{Q, l}\right)^{\times}}\Big\}.
  \]
  Any Hayes character $\widetilde{\chi} \in G(\mathcal{R}_{Q, l})$ can be decomposed as a product $\chi_Q \xi_l$, where $\chi_Q$ is a Dirichlet character modulo $Q$, and $\xi_l$ is a short interval character of length $l$ (where $l=\max\{v:\, \deg(A-B)<N-v\}$). Notice that the group has size $\phi(Q)q^{l}$.  Note that taking $Q=1$, for any $f\in I(B; l)$ and any short interval character $\xi$ of length $l$, we have $\xi(f)=\xi(B)$. Also $\xi\in G(\mathcal{R}_{1, l})$ means a Hayes character with trivial Dirichlet part.
  
  \vspace{2mm}
  \noindent
    A short interval character $\xi_l$ is called primitive if it is not equal to a short interval character of length strictly larger than $l$. A Hayes character $\widetilde{\chi} \in G(\mathcal{R}_{Q, l})$ is called primitive if both $\chi_Q$ and $\xi_l$ are primitive. 
   Also a Hayes character is called non-principal if either is non-principal in the Dirichlet character or if the length of its short interval character is non-zero. The Hayes conductor of $\widetilde{\chi} \in G(\mathcal{R}_{Q, l})$ is defined by $\text{Cond}(\widetilde{\chi}):= \text{Cond}(\chi_Q)+ \text{len}(\xi_l)=\deg(Q)+l.$
   
   \subsubsection{Orthogonality of Hayes characters}
   The orthogonality relation are given by (see \cite{GORO})
   \[
   \frac{1}{\Phi(Q)q^l}\sum_{A (\mathcal{R}_{Q, l})}\widetilde{\chi_1}(A)\overline{\widetilde{\chi_2}(A)}=\mathds{1}_{\widetilde{\chi_1}=\widetilde{\chi_2}}
   \]
and
\[
\frac{1}{\Phi(Q)q^l}\sum_{\widetilde{\chi} \in G(\mathcal{R}_{Q, l})}\widetilde{\chi}(A)\overline{\widetilde{\chi}(B)}= \mathds{1}_{A\equiv B(\mathcal{R}_{Q, l})}.
\]
\noindent
Let $\xi_1$ and $\xi_2$ be short interval characters of length $l$. The orthogonality (\cite{KMT}, equation $20$) relation with $Q=1$ implies that
  \[
  \frac{1}{q^l}\sum_{A\in \mathcal{R}_{1, l}}\xi_1(A)\overline{\xi_2(A)}=\mathds{1}_{\xi_1=\xi_2}.
  \]
  If $A_1, A_2\in \mathcal{M}_n$ then $A_1\equiv A_2 \pmod{\mathcal{R}_{1, l}}$ if and only if $\deg(A_1-A_2)<n-l$ (first $(l+1)$ coefficients of $A_1$ and $A_2$ coincide).\\
  Note that polynomials of the form 
  \[
  t^n+a_{n-1}t^{n-1}+\ldots + a_{n-l}t^{n-l}
  \]
  represent classes modulo $\mathcal{R}_{1, l}$ and so $\mathcal{B}$ defined earlier, comprises of exactly these polynomials. Each class modulo $\mathcal{R}_{1, l}$ can be written as 
  \[
  I(B; n-l)=B+ \widetilde{\mathcal{P}}_{\leq n-l-1}
  \]
  with $B\in \mathcal{B}$.

\subsubsection{Pretentiousness in function fields} 
Following Klurman \cite{KLR}, we define the ``distance" between two multiplicative 
 functions $\psi_1, \psi_2 : \mathcal{M} \rightarrow \mathbb{U}$ by
 \begin{align}\label{The distance function}
 \mathbb{D}(\psi_1, \psi_2; m, n)=\bigg(\sum_{\substack{P\in \mathcal{P}\\ m\leq \deg P\leq n}} \frac{1-\Re(\psi_1(P)\overline{\psi_2(P)})}{q^{\deg P}}\bigg)^{\frac12}
 \end{align}
 and $\mathbb{D}(\psi_1, \psi_2; n):= \mathbb{D}(\psi_1, \psi_2; 1, n).$
If $\mathbb{D}(\psi_1, \psi_2; \infty)< \infty$ then $\psi_1$ is said to be $\psi_2$-{\it pretentious}, otherwise it is called {\it non-pretentious}. In this ways, if for some $\theta\in [0, 1]$, $\mathbb{D}(\psi, \chi_Q\, \xi_l\, e_{\theta}; \infty)<\infty$ then $\psi$ is called {\it Hayes pretentious}, where $\chi_Q$ is a Dirichlet character modulo $Q$, and $\xi_l$ is a short interval character of length $l$ and $e_{\theta}(P)=e^{2\pi i\theta d(P)}$. More precisely,
  \begin{align*}
  \mathbb{D}(f, \chi_Q\, \xi_l\, e_{\theta}; \infty)=\bigg(\sum_{P\in \mathcal{P}}\frac{(1-\Re(f(P)\overline{\chi}_Q(P)\overline{\xi}_l(P)e^{-2\pi i\theta d(P)}))}{q^{d(P)}}\bigg)^{\frac{1}{2}}\quad \text{ is finite}.
  \end{align*}
  Observe that for any Hayes character $ \chi_Q \xi_l\in G(\mathcal{R}_{Q, l})$, and for some $\theta\in [0, 1]$, 
  \[
  \mathbb{D}(f; \chi_Q\, \xi_l\, e_{\theta}; \infty)<\infty \quad \text{ if and only if } \quad  \mathbb{D}(f \overline{\chi}_Q\overline{\xi}_l e_{-\theta}, 1; \infty)< \infty.
  \]

\subsection{Basic lemmas}
\noindent 
 The following lemma follows from Chinese remainder theorem over function fields.
\begin{lemma}\label{Chinese remainder theorem}
Let $A_1, A_2, g_1, g_2 \in \mathbb{F}_q[x]\setminus \{0\}$ and $h_1, h_2\in \mathbb{F}_q[x]$ such that $(A_1, h_1)=(A_2, h_2)=1$. The congruence system 
\[
A_j f+h_j \equiv 0 \pmod {g_j} \quad j=1, 2
\]
has a solution if and only if $(g_1, g_2)|(A_1h_2-A_2h_1)$. If the solution exists, it is unique modulo $[g_1, g_2]$.
\end{lemma}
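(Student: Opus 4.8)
The plan is to treat this as the standard generalized Chinese Remainder Theorem, exploiting that $\mathbb{F}_q[x]$ is a Euclidean domain and hence a principal ideal domain in which B\'{e}zout's identity holds. First I would rewrite the system in the equivalent form $f \equiv -h_1 \pmod{g_1}$ and $f \equiv -h_2 \pmod{g_2}$, so that the question becomes the solvability of a pair of simultaneous linear congruences with possibly non-coprime moduli. Writing $d := (g_1, g_2)$, the whole matter reduces to understanding when the single linear congruence $g_1 t \equiv h_1 - h_2 \pmod{g_2}$ admits a solution $t \in \mathbb{F}_q[x]$.

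For the necessity of the stated condition, I would argue directly: if $f$ satisfies both congruences then $d \mid g_1 \mid (f + h_1)$ and $d \mid g_2 \mid (f + h_2)$, and subtracting these two divisibilities gives $d \mid (h_2 - h_1)$. For sufficiency, assuming $d \mid (h_2 - h_1)$, I would parametrise the solutions of the first congruence as $f = -h_1 + g_1 t$ and substitute into the second to obtain $g_1 t \equiv h_1 - h_2 \pmod{g_2}$. Since $(g_1, g_2) = d$ divides $h_1 - h_2$, B\'{e}zout's identity $u g_1 + v g_2 = d$ produces an explicit $t$ solving this linear congruence, and the corresponding $f$ then solves the original system.

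Finally, for the uniqueness claim I would show that the solution set is a single residue class modulo $[g_1, g_2]$. If $f$ and $f'$ both solve the system then $g_1 \mid (f - f')$ and $g_2 \mid (f - f')$, whence $[g_1, g_2] \mid (f - f')$; conversely, any $f'$ congruent to a solution $f$ modulo $[g_1, g_2]$ again satisfies both congruences, because $g_j \mid [g_1, g_2] \mid (f' - f)$ forces $f' + h_j \equiv f + h_j \equiv 0 \pmod{g_j}$. This pins the solution down to exactly one class modulo $[g_1, g_2]$.

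The argument is entirely routine over a principal ideal domain, so I do not anticipate a genuine obstacle; the only point demanding a little care is the passage from the gcd condition to an actual solution of the linear congruence, which rests on B\'{e}zout together with the identity relating $(g_1, g_2)$ and $[g_1, g_2]$ (namely that their product agrees with $g_1 g_2$ up to a unit). I would state these standard facts for $\mathbb{F}_q[x]$ explicitly so as to keep the function-field proof self-contained.
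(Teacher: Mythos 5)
Your argument is correct and complete: necessity from $d=(g_1,g_2)$ dividing both $f+h_1$ and $f+h_2$, sufficiency by parametrising $f=-h_1+g_1t$ and solving the linear congruence via B\'{e}zout in the Euclidean domain $\mathbb{F}_q[x]$, and uniqueness modulo $[g_1,g_2]$ from both moduli dividing the difference of two solutions. The paper states this lemma without proof as a standard fact, so there is nothing to compare against; your write-up is exactly the routine generalized Chinese remainder argument one would supply.
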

\noindent
We now present the analogue of classical prime number  theorem for polynomials over finite fields (see \cite{ROS}, Theorem $2.2$).
\begin{lemma}[Prime Polynomial Theorem]\label{prime polynomial theorem}
Let $\mathcal{P}_{n}$ denote the number of monic irreducible polynomials in $A$ of degree $n$. Then we have
\begin{align*}
|\mathcal{P}_{n}|=\frac{q^n}{n}+O\left(\frac{q^{\frac{n}{2}}}{n}\right).
\end{align*}
\end{lemma}  
\noindent
The following lemma collects some useful estimates over function field.

\begin{lemma}\label{some useful sums}
\begin{enumerate}[label=\alph*)]
\item\label{item 1} Let, $q>1$ and $\gamma>0$. Then 
\[
\sum_{m\leq n}q^m m^{-\gamma}= O(q^n n^{-\gamma}).
\]

\item We have
\begin{align*}
\sum_{\substack{P \in \mathcal{P} \\ \deg P\leq n}}q^{-\deg P}=\log n +c_1 +O(1/n)
\end{align*}
where  $c_1$ is an absolute constant.
\item Also we have
\begin{align*}
\sum_{\substack{m\deg P\leq n/2 \\ m\geq 1}}q^{m\deg P} = O\left(\frac{q^n}{n}\right) \quad \text{ and } \quad \sum_{\substack{m\deg P\leq n \\ m\geq 1}} q^{-(m+1)\deg P} = O(1).
\end{align*}
\end{enumerate}
\end{lemma}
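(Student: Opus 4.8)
All three estimates rest on two ingredients: the geometric-domination principle that is itself the content of part (a), and the Prime Polynomial Theorem $|\mathcal{P}_{d,q}| = \frac{1}{d}\sum_{e\mid d}\mu(e)q^{d/e} = q^d/d + O(q^{d/2}/d)$. The plan is therefore to prove part (a) first and then feed it into parts (b) and (c).

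For part (a), I would observe that the summand $a_m := q^m m^{-\gamma}$ is eventually increasing like a geometric progression, so that the whole sum is comparable to its final term $a_n = q^n n^{-\gamma}$. Since $q>1$ and $\gamma>0$, I choose $m_0 = m_0(q,\gamma)$ with $(1+1/m_0)^\gamma \le q^{1/2}$; then for $m_0\le m\le n$ one has $a_{m+1}/a_m = q(1+1/m)^{-\gamma}\ge q^{1/2}$, whence $a_m\le q^{-(n-m)/2}a_n$. Summing the resulting geometric series gives $\sum_{m_0\le m\le n}a_m\le (1-q^{-1/2})^{-1}a_n$, and the finitely many terms with $m<m_0$ contribute $O(1)=O(q^n n^{-\gamma})$. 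This establishes part (a).

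For part (b), I would substitute the Prime Polynomial Theorem to obtain $|\mathcal{P}_{d,q}|q^{-d} = 1/d + O(q^{-d/2}/d)$ and split $\sum_{\deg P\le n}q^{-\deg P} = \sum_{d=1}^n 1/d + \sum_{d=1}^n O(q^{-d/2}/d)$. The first sum equals $\log n + c_0 + O(1/n)$, where $c_0$ is the Euler--Mascheroni constant, while the second converges absolutely, its tail past $n$ being $O(q^{-n/2}) = O(1/n)$, so it equals a constant $c_2$ up to $O(1/n)$. Setting $c_1 = c_0 + c_2$ gives the claim. For the first sum in part (c), I would group the pairs $(P,m)$ by the degree $D = m\deg P$ of the prime power $P^m$, so that $\sum_{m\deg P\le n/2}q^{m\deg P} = \sum_{D\le n/2}q^D N(D)$ with $N(D) = \#\{(P,m): m\deg P = D,\, m\ge 1\} = \sum_{d\mid D}|\mathcal{P}_{d,q}|$. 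The crude bound $N(D)\le\sum_{d=1}^D|\mathcal{P}_{d,q}|\ll\sum_{d=1}^D q^d/d\ll q^D/D$ (the last step being part (a) with $\gamma=1$) reduces the sum to $\sum_{D\le n/2}q^{2D}/D$, and applying part (a) once more, now with base $q^2$, exponent $\gamma=1$, and range $D\le n/2$, yields $O(q^n/n)$. For the second sum I would drop the constraint $m\deg P\le n$ and sum the geometric series in $m$: $\sum_P\sum_{m\ge1}q^{-(m+1)\deg P} = \sum_P\frac{q^{-2\deg P}}{1-q^{-\deg P}}\ll\sum_P q^{-2\deg P} = \sum_{d\ge1}|\mathcal{P}_{d,q}|q^{-2d}\ll\sum_{d\ge1}q^{-d}/d<\infty$.

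The computations are routine; the only points needing genuine care are, in part (b), tracking the Prime Polynomial Theorem error and the harmonic-series error together so as to land on the clean $O(1/n)$ rather than a weaker power, and, in part (c), the observation that grouping the pairs $(P,m)$ by $\deg(P^m)$ turns the first sum into precisely the geometric-domination setting of part (a).
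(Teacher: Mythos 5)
Your proof is correct and follows essentially the same route as the paper: all three parts rest on the Prime Polynomial Theorem $|\mathcal{P}_{d,q}| = q^d/d + O(q^{d/2}/d)$ together with the geometric-domination estimate of part (a). The remaining differences are cosmetic --- the paper proves (a) by splitting the sum at $m = n/2$ rather than by your ratio comparison, and handles the sums in (c) by separating the $m=1$ terms from the $m\ge 2$ terms rather than by grouping the pairs $(P,m)$ according to $\deg(P^m)$ or summing the geometric series in $m$ --- but both versions reduce to the same two ingredients.
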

\begin{proof} The estimates are collected from Section $3.3$ of \cite{KZ}. 
\end{proof}

\begin{lemma}[\cite{AND2}, Lemma $2.2$]\label{main lemma for TLF}
Let $f\in \mathbb{F}_q[x]$. Then 
\[
\prod_{P\, |\, f}\bigg(1+\frac{1}{|P|}\bigg)=O(\log (\deg (f))).
\]
\end{lemma}

\begin{lemma}\label{lower bound for certain sum}
Let $\Delta$ be a polynomial in $\mathbb{F}_q[x]$. Then 
\[
\sum_{\substack{M\in \mathcal{M}_{\leq n} \\ (M,\, \Delta)=1}}\frac{\mu^2(M)3^{\omega(M)}}{|M|}\geq c n^3 \prod_{P\, |\, \Delta}\bigg(1+\frac{3}{|P|}\bigg)^{-1},
\]
where $c$ is an absolute constant.
\end{lemma}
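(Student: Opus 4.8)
The plan is to isolate the dependence on $\Delta$ exactly into the factor $\prod_{P\mid\Delta}(1+3/|P|)^{-1}$, thereby reducing the statement to a clean, $\Delta$-free lower bound. Write $T_\Delta(n):=\sum_{M\in\mathcal{M}_{\le n,q},\,(M,\Delta)=1}\mu^2(M)3^{\omega(M)}/|M|$ for the sum in question, and let $T_1(n)$ denote the same sum with the coprimality condition removed. Since $\mu^2(M)$ restricts attention to squarefree $M$, each such $M$ factors uniquely as $M=AB$ with $A=\prod_{P\mid M,\,P\mid\Delta}P$ and $(B,\Delta)=1$; here $A$ ranges over squarefree divisors of $\mathrm{rad}(\Delta)=\prod_{P\mid\Delta}P$ and $B$ over squarefree polynomials coprime to $\Delta$. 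Because $A$ and $B$ are coprime and squarefree, $\omega$, $|\cdot|$ and $\mu^2$ all factor over this decomposition, which yields the identity
\[
T_1(n)=\sum_{A\mid\mathrm{rad}(\Delta)}\frac{3^{\omega(A)}}{|A|}\,T_\Delta(n-\deg A),
\]
with the convention $T_\Delta(m)=0$ for $m<0$.

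Next I would exploit that $T_\Delta$ is nonnegative and nondecreasing in its argument, so $T_\Delta(n-\deg A)\le T_\Delta(n)$ for every $A$. Substituting this into the identity and summing the nonnegative coefficients gives
\[
T_1(n)\le T_\Delta(n)\sum_{A\mid\mathrm{rad}(\Delta)}\frac{3^{\omega(A)}}{|A|}=T_\Delta(n)\prod_{P\mid\Delta}\Big(1+\frac{3}{|P|}\Big),
\]
which rearranges to $T_\Delta(n)\ge T_1(n)\prod_{P\mid\Delta}(1+3/|P|)^{-1}$. Thus the entire lemma follows once the $\Delta$-free bound $T_1(n)\gg n^3$ is established with an absolute implied constant.

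For the latter I would pass to generating functions. With $u=q^{-s}$ and by multiplicativity, $\sum_{M}\mu^2(M)3^{\omega(M)}u^{\deg M}=\prod_{P}(1+3u^{\deg P})$. Comparing with $\zeta_A(s)^3=\prod_P(1-u^{\deg P})^{-3}=(1-qu)^{-3}$ and using $(1+3w)(1-w)^3=1-6w^2+8w^3-3w^4$, this product equals $(1-qu)^{-3}H_0(u)$, where $H_0(u)=\prod_P(1+3u^{\deg P})(1-u^{\deg P})^3$ converges and is positive for $|u|<q^{-1/2}$; in particular $C_0:=H_0(q^{-1})$ is a positive constant depending only on $q$. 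Hence the generating function of the partial sums is $\sum_n T_1(n)t^n=H_0(t/q)(1-t)^{-4}$, which is meromorphic in $|t|<q^{1/2}$ with a single pole, of order $4$, at $t=1$ and positive leading coefficient $C_0$. Since the coefficients $T_1(n)$ are nonnegative, singularity analysis gives $T_1(n)\sim\frac{C_0}{6}n^3$, so $T_1(n)\ge c_0 n^3$ for an absolute $c_0>0$ (shrinking $c_0$ to absorb the finitely many small $n$, where $T_1(n)>0$ trivially); the lemma then holds with $c=c_0$.

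The genuinely substantive step is the combinatorial identity for $T_1(n)$: it converts the arithmetic coprimality condition into the exact Euler factor $\prod_{P\mid\Delta}(1+3/|P|)$, and it is essential that it be an identity rather than an inequality, so that the monotonicity of $T_\Delta$ can subsequently be applied in the correct direction. By contrast, the analytic input $T_1(n)\gg n^3$ is routine precisely because it involves no $\Delta$, so that no uniformity in $\Delta$ is required; the only point needing care is that $H_0$ be analytic slightly beyond $|u|=q^{-1}$, which holds since the relevant Euler product converges on $|u|<q^{-1/2}$, ensuring the pole at $t=1$ is isolated.
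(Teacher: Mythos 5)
Your proposal is correct and follows essentially the same route as the paper: the coprimality condition is removed at the cost of the factor $\prod_{P\mid\Delta}(1+3/|P|)$ via the squarefree decomposition $M=AB$, and the $\Delta$-free bound $\gg n^3$ comes from the Euler-product factorization against $(1-u)^{-3}$ (the paper extracts $H(n)\ge c_1 n^2$ from the triple pole and sums over $m\le n$, while you read off the fourth-order pole of the partial-sum generating function directly — a cosmetic difference). Your version is in fact slightly more careful, since the paper merely asserts the key inequality that your identity-plus-monotonicity argument justifies.
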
 
\begin{proof}
Let us consider 
\[
F(s)=\sum_{M}\frac{\mu^{2}(M)3^{\omega(M)}}{|M|^{s+1}}, \quad \Re(s)>0.
\]
We can write 
\[
F(s)=\sum_{n=1}^{\infty}\frac{H(n)}{q^{ns}}, \quad \text{ where } \quad H(n)=\sum_{M\in \mathcal{M}_{n}}\frac{\mu^{2}(M)3^{\omega(M)}}{|M|}.
\]
Substituting $u=q^{-s}$, we define
\[
\widetilde{F}(u)=\sum_{n=1}^{\infty}H(n)u^n, \quad |u|<1.
\]
On the other hand, from Euler product we get
\begin{align*}
\widetilde{F}(u)=\frac{\widetilde{G}(u)}{(1-u)^3}, \quad \text{ where } \quad \widetilde{G}(u)=G(s)=\prod_{P}\bigg(1+\frac{3}{|P|^{s+1}}\bigg)\bigg(1-\frac{1}{|P|^{s+1}}\bigg)^{3}.
\end{align*}
It is easy to see that $G(u)$ converges for $|u|<1$ and therefore bounded.
Comparing the coefficient of $\widetilde{F}(u)$ there exist constants $c_1, c_2>0$ such that 
 \begin{align}\label{upper and lower bound}
 c_1 n^2\leq H(n)\leq c_2 n^2.
 \end{align}
 Using this, we conclude that 
\[
\prod_{P\, |\, \Delta}\bigg(1+\frac{3}{|P|}\bigg)\sum_{\substack{M\in \mathcal{M}_{\leq n} \\ (M,\, \Delta)=1}}\frac{\mu^2(M)3^{\omega(M)}}{|M|}\geq \sum_{M\in \mathcal{M}_{\leq n}}\frac{\mu^2(M)3^{\omega(M)}}{|M|}=\sum_{m\leq n}H(m)\geq cn^3,
\]
which completes the proof of the lemma.
\end{proof}

\subsection{Lemmas on Dirichelet characters}

We start with the following standard property of a primitive Dirichlet character over function fields.
\begin{lemma}\label{Primitive property}
Let $\chi$ be a primitive Dirichlet character of modulus $Q$ on 
$\mathbb F_q[x]$. Then for any non-constant polynomial $D$
dividing $Q$ satisfying $deg(D)<deg(Q)$, there exists 
$C\equiv 1\mod D$, $(C,Q)=1$ such that $\chi(C)\neq 1$.
\end{lemma}

\begin{lemma}\label{Basic lemma}
Let $\chi$ be a primitive Dirchlet character modulo $P^k$ on 
$\mathbb F_q[x]$, where $P$ is an irreducible polynomial. 
For any $1\le i <k$, we have
\[
\sum_{L \Mod{P^i}} \chi(A+P^{k-i}L)=0.
\]
\proof
Let 
\[
S=\sum_{L(P^i)} \chi(A+P^{k-i}L).
\]
Using Lemma \ref{Primitive property}, we get 
$C\equiv 1 \Mod{P^{k-i}}$ such that $\chi(C)\neq 1$.
Write $C$ as $C=1+BP^{k-i}$ and consequently
\begin{align*}
\chi(C)S = & \chi(1+BP^{k-i})\sum_{L(P^i)} \chi(A+P^{k-i}L)\\
         = & \sum_{L(P^i)}\chi(A+(B+L)P^{k-i}+BLP^{2(k-i)})
\end{align*}
If $k\ge 2i$ then $2(k-i)\ge k$, therefore
\[
\chi(C)S =\sum_{L(P^i)}\chi(A+(B+L)P^{k-i})=S,
\]
which implies $S=0$ as $\chi(C)\neq 1$.
\noindent
The other possibility is $k<2i$ and in that case,
by Lemma \ref{Primitive property}, we can find 
$C\equiv 1 \Mod{P^i}$ such that $\chi(C)\neq 1$.
Writing $C=1+DP^i$ and multiplying $\chi(C)$ with $S$, we obtain
\begin{align*}
\chi(C)S=& \sum_{L(P^i)}\chi(A+P^{k-i}L+ADP^i)\\
        =& \sum_{L(P^i)}\chi(A+P^{k-i}(L+ADP^{2i-k}))\\
        =& \sum_{L(P^i)}\chi(A+P^{k-i}L)=S
\end{align*}
since $2i-k>0$.
But this is not possible unless $S=0$, which completes the proof.
\end{lemma}

\begin{lemma}\label{Single character sum}
Let $\chi$ be a primitive Dirchlet character of conductor $P^m$, 
where $P$ is an irreducible polynomial. Then for any $F\in \mathbb{F}_q[x]\setminus\{0\}$ and $D\in \mathbb{F}_q[x]$, we get
\[
\sum_{h(P^m)}\chi\left(hF+D\right)
=\begin{cases}
  |P|^m\chi(D) & \text{ if } P^m|F, \\
   0           & \text{ else }.
 \end{cases}
\]
\end{lemma}

\proof
If $P$ does not divide then we have a full character sum which vanishes. Therefore we suppose that $P^c||F$ and write $F=P^c L$ with
$L$ coprime to $P$. The polynomial $L$ being invertible, after a change of variable, the left hand side becomes
$$
\sum_{h(P^m)}\chi(hP^c+D).
$$
Note that it is enough to prove that the sum is nonzero only if $c\ge m$. Suppose $c<m$. We can write the variable $h$ modulo $P^c$ as 
$h=x+P^{m-c}y$ where $x$ varies over residues modulo $P^{m-c}$ 
and $y$ over residues modulo $P^c$. Thus
\begin{align*}
\sum_{h(P^m)}\chi(hP^c+D)
&=\sum_{\substack{ x(P^{m-c}) \\ y(P^c)}}
                 \chi((x+P^{m-c}y)P^c+D)\\
&=\sum_{\substack{ x(P^{m-c}) \\ y(P^c)}}
                \chi(xP^c+D)\\
&=|P|^c\sum_{x(P^{m-c})}\chi(xP^c+D)\\
&=0,
\end{align*}
where we use Lemma \ref{Basic lemma} in the last step.

\begin{lemma}\label{Double character sum}
Let $P$ be an irreducible polynomial, $\chi_1$ and $\chi_2$
be primitive Dirchlet characters modulo $P^a$ and $P^b$ respectively. For any polynomials $F_1,F_2\in \mathbb{F}_q[x]\setminus\{0\}$ and $D_1, D_2\in \mathbb{F}_q[x]$,
let
\[
I(\chi_1,\chi_2)=\sum_{h(P^{\max(a,b)})}
\chi_1(hF_1+D_1)\chi_2(hF_2+D_2).
\]
If $a<b$ then $I(\chi_1,\chi_2)\neq 0$ only if $P^{b-a}|F_2$.
\end{lemma}

\begin{proof}
Suppose that $a<b$ and write the variable $h$ modulo $P^b$ as
$h=x+P^a y$ with $x$ varying modulo $P^a$ and $y$ modulo 
$P^{b-a}$. The sum on the left hand side becomes
\[
\sum_{x(P^a)}\chi_1(xF_1+D_1)
\sum_{y(P^{b-a})}\chi_2(xF_2+D_2+P^a yF_2).
\]
Putting $A=xF_2+D_2$, the inner sums
\[
S=\sum_{y(P^{b-a})}\chi_2(A+P^a yF_2).
\]
By Lemma \ref{Basic lemma}, $S$ vanishes if $P$ does not divide $F_2$. Let $P^l|| F_2$ with $l\ge 1$.

\noindent
If $l\ge b-a$, then
$$ S=|P|^{b-a}\chi_2(xF_2+D_2).$$

\noindent
Next suppose $1\le l < b-a$ and write $F_2=P^lt$ with $t$ coprime to $P$. Therefore
$$S=\sum_{y(P^{b-a})}\chi_2(A+P^c y),$$
where $c=l+a$.
Clearly $a<c<b$, so we can write the variable $y$ modulo 
$P^{b-a}$ as $y=u+P^{b-c}v$ with $u$ varying modulo $P^{b-c}$
and $v$ modulo $P^{c-a}$. This gives
$$S=|P|^{c-a}\sum_{u(P^{b-c})}\chi_2(A+P^c u)=0$$
from Lemma \ref{Basic lemma}. 
\end{proof}

\subsection{Brun-Titchmarsh inequality over function fields}
Given a non-constant polynomial $M\in \mathbb{F}_q[x]$ and a polynomial $B$ coprime to $M$, let  $\pi_A(n; M, B)$ denotes the number of primes $P\in \mathcal{P}_{n}$ such that $P\equiv B \pmod M$. The prime polynomial theorem for arithmetic progression (\cite{ROS}, Theorem $4.8$) says that
\begin{align}\label{primes in AP over function field}
\pi_A(n; M, B)= \frac{q^n}{n\Phi(M)}+O\left(\frac{q^{\frac{n}{2}}}{n}\right).
\end{align}
As in classical case, we want to allow $\deg(M)$ to grow with $n$. The interesting range of parameter is $\deg (M)< n$ because if $\deg (M)\geq n$ there is at most one prime polynomial of degree $n$ in arithmetic progression $h\equiv B \pmod M$. From \eqref{primes in AP over function field}, we see that if $\frac{n}{2}\leq \deg (M)< n$ then error term becomes larger than main term. Therefore, we must assume that $\deg (M)<\frac{n}{2}$. 

\vspace{2mm}
\noindent
The following lemma, Brun-Titchmarsh inequality over function field which is a special case of a theorem of Chin-Nung Hsu \cite{HSU} gives an upper bound when $\deg(M)<n$.
\begin{lemma}[\cite{HSU}, Theorem $4.3$]\label{Brun Titchmarsh inequality}
Let $\pi_A(n; M, B)$ be defined as above and $\Phi(M)$ denotes the number of coprime residues modulo $M$. Then for $\deg(M)<n$, we have
\[
\pi_A(n; M, B)\leq \frac{2q^n}{\Phi(M)(n-deg (M)+1)}.
\] 
\end{lemma}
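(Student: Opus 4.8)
The plan is to realise the left-hand side as a sieve problem and apply Selberg's upper bound sieve, the main point being that over $\mathbb{F}_q[x]$ the local counts are \emph{exact}, so the remainder term that causes all the trouble over $\mathbb{Z}$ disappears. I may assume $(B,M)=1$, since otherwise $\pi_A(n;M,B)\le 1$ and there is nothing to prove. Set
\[
\mathcal{A}=\{f\in\mathcal{M}_{n,q}: f\equiv B \pmod M\}.
\]
Because $\deg M<n$, exactly $q^{n-\deg M}$ monic polynomials of degree $n$ lie in each residue class modulo $M$, so $|\mathcal{A}|=q^{n-\deg M}=q^n/|M|$. Every $P\in\mathcal{P}_{n,q}$ with $P\equiv B\pmod M$ is coprime to $M$ and, being irreducible of degree $n$, has no monic irreducible divisor $Q\nmid M$ with $\deg Q\le z$ for any $z<n$. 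Hence, writing $S(\mathcal{A},z)$ for the number of $f\in\mathcal{A}$ having no irreducible factor $Q\nmid M$ of degree $\le z$, we obtain the sieve majorant $\pi_A(n;M,B)\le S(\mathcal{A},z)$ for every $z<n$.

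Next I would compute the local densities. For squarefree monic $d$ with $(d,M)=1$, the two conditions $f\equiv B\pmod M$ and $f\equiv 0\pmod d$ determine, by the Chinese remainder theorem (Lemma~\ref{Chinese remainder theorem}), a single residue class modulo $Md$; when $\deg(Md)\le n$ this class contains exactly $q^{n-\deg M-\deg d}=|\mathcal{A}|/|d|$ elements of $\mathcal{A}$, so the remainder $R_d$ \emph{vanishes identically}. Consequently, choosing the sieve level $z$ subject to $2z+\deg M\le n$ forces $R_d=0$ for every $d$ arising in Selberg's weights $\lambda_{d_1}\lambda_{d_2}$ (where $\deg[d_1,d_2]\le 2z$), and Selberg's sieve yields, with no error term,
\[
\pi_A(n;M,B)\le S(\mathcal{A},z)\le \frac{|\mathcal{A}|}{G(z)},\qquad G(z)=\sum_{\substack{\deg d\le z\\(d,M)=1}}\frac{\mu^2(d)}{\Phi(d)}.
\]

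To bound $G(z)$ from below I would use the multiplicative identity $\mu^2(d)/\Phi(d)=\mu^2(d)\sum_{\mathrm{rad}(e)=d}|e|^{-1}$, valid since for squarefree $d=\prod_i Q_i$ one has $\prod_i\sum_{a\ge1}|Q_i|^{-a}=\prod_i(|Q_i|-1)^{-1}=\Phi(d)^{-1}$. Summing over $\deg d\le z$ collapses the double sum to
\[
G(z)\ \ge\ \sum_{\substack{\deg e\le z\\(e,M)=1}}\frac{1}{|e|}\ \ge\ \frac{\Phi(M)}{|M|}\,(z-\deg M+1),
\]
the last step using that for $m\ge\deg M$ exactly $q^m\Phi(M)/|M|$ monic polynomials of degree $m$ are coprime to $M$. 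Combining the two displays produces a Brun--Titchmarsh bound of precisely the shape $\pi_A(n;M,B)\le \dfrac{c\,q^n}{\Phi(M)\,(\text{linear in }z)}$.

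The hard part is the sharp constant. With the admissible level $z=\lfloor(n-\deg M)/2\rfloor$ forced by the vanishing-remainder condition $2z+\deg M\le n$, the elementary estimate above yields the linear factor $\tfrac12(n-3\deg M+2)$ rather than the claimed $(n-\deg M+1)$, and it degrades as $\deg M$ approaches $n$. Pushing the constant down to exactly $2$ and extending validity to the whole range $\deg M<n$ requires either a finer weighting inside Selberg's sieve or, more cleanly, the function field large sieve in the style of Montgomery--Vaughan, which exploits the exact orthogonality $\sum_{f\in\mathcal{M}_{n,q}}\chi(f)=0$ for non-principal $\chi\bmod M$ and $n\ge\deg M$ to run the sieve with length up to $\sim n$. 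This sharp optimisation is exactly the content of Theorem~4.3 of \cite{HSU}, which is why we invoke it directly rather than reprove it.
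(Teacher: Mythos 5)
The paper does not prove this lemma at all: it is imported verbatim as a special case of Theorem 4.3 of \cite{HSU}, so your final move --- invoking that theorem for the sharp constant and the full range $\deg(M)<n$ --- coincides exactly with what the paper does. The Selberg-sieve derivation you prepend is a genuinely different (and correct, as far as it goes) partial route: the observation that the congruence counts over $\mathbb{F}_q[x]$ are exact so the remainder terms vanish for $2z+\deg M\le n$, and the lower bound $G(z)\ge \Phi(M)|M|^{-1}(z-\deg M+1)$, are both right, and they buy a self-contained Brun--Titchmarsh inequality of the correct shape with no error term. But, as you correctly flag, the admissible level forces the weaker denominator $\tfrac12(n-3\deg M+2)$, so your elementary bound is vacuous once $\deg M\ge n/3$. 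That shortfall is not cosmetic here: the paper applies the lemma precisely for moduli of large degree (the remark after the lemma stresses that it beats \eqref{primes in AP} exactly when $n/2<\deg(M)<n$, and it is used in that range in the estimate of $L_2$ in Lemma \ref{useful turan-kubilius over primes} and of $E_{16}$, $E_{17}$ in the proof of Theorem \ref{main theorem 2}), where your sieve gives nothing. So the citation of \cite{HSU} is a necessity rather than an optimisation, and your honest acknowledgement of that is the right conclusion; just be aware that the elementary portion of your argument could not replace the cited theorem in any of the places this lemma is actually used.
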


\subsection{Application of Selberg sieve over $\mathbb{F}_q[x]$}
The following lemma is an application of Selberg sieve method for polynomials over finite field to estimate $\pi_A(n,M,B)$ on an average when 
$\frac{n}{2}<\deg(M)<n$.
\begin{lemma}\label{useful lemma for Turan kubilius}
Using the above notations, we have
\begin{align*}
\Theta(n):=\sum_{\frac{n}{2}<\deg Q \leq n}\Phi(Q)\pi_A^2(n; Q, -h)\ll |\mathcal{P}_{n}|^2
\end{align*}
where the summation varies over all monic irreducible polynomial $Q$ and $h$ is a fixed polynomial with $\deg h<n$.
\end{lemma}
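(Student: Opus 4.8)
The plan is to expand the square, reparametrise each prime $P$ counted by $\pi_A(n;Q,-h)$ through the factorisation $P+h=QS$, and thereby reduce $\Theta(n)$ to a family of three--dimensional sieve problems indexed by the cofactors $S$; the lower bound of Lemma~\ref{lower bound for certain sum}, with its $3^{\omega(M)}$--weight, is precisely the quantity that appears in the denominator of the corresponding Selberg sieve and forces out the correct power of $n$.

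First I would record an exact identity. If $Q\in\mathcal{P}_q$ has degree $d$ and $P\in\mathcal{M}_{n,q}$ satisfies $Q\mid P+h$, then $P+h$ is monic of degree $n$, so $P+h=QS$ for a unique monic $S$ of degree $n-d$; hence
\[
\pi_A(n;Q,-h)=\#\{S\in\mathcal{M}_{n-d,q}:QS-h\in\mathcal{P}_{n,q}\},\quad \pi_A^2(n;Q,-h)=\#\{(S_1,S_2)\in\mathcal{M}_{n-d,q}^2:QS_1-h,\,QS_2-h\in\mathcal{P}_{n,q}\}.
\]
Writing $e=n-d$ (so $0\le e<n/2$ over the range of the sum) and using $\Phi(Q)=q^{d}-1<q^{\,n-e}$, I would regroup the sum as
\[
\Theta(n)\ll\sum_{0\le e<n/2}q^{\,n-e}\sum_{S_1,S_2\in\mathcal{M}_{e,q}}\#\bigl\{Q\in\mathcal{P}_{n-e,q}:\ Q,\ S_1Q-h,\ S_2Q-h\ \text{are all irreducible}\bigr\}.
\]
This is the decisive step: for fixed $S_1,S_2$ the inner quantity counts the $Q$ of degree $n-e$ at which the three linear forms $Q,\ S_1Q-h,\ S_2Q-h$ are simultaneously prime.

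Next I would run the Selberg sieve over $\mathbb{F}_q[x]$ on these three forms. Since $\deg Q=n-e>n/2$, the admissible sieve level has size $\asymp n$, and the local solvability of the system of congruences defining the sieve weights is exactly the content of Lemma~\ref{Chinese remainder theorem}. The main term of the sieve is governed by a sum of the shape treated in Lemma~\ref{lower bound for certain sum}, whose lower bound $G(z)\gg z^{3}\prod_{P\mid\Delta}(1+3/|P|)^{-1}$ yields
\[
\#\bigl\{Q\in\mathcal{P}_{n-e,q}:\ Q,\,S_1Q-h,\,S_2Q-h\ \text{prime}\bigr\}\ \ll\ \frac{q^{\,n-e}}{(n-e)^{3}}\,\mathfrak{S}(S_1,S_2),
\]
where $\mathfrak{S}(S_1,S_2)$ is the singular series, dominated by $\prod_{P\mid\Delta}(1+3/|P|)$ over the primes $P$ where the forms degenerate (these are determined by $h$ and by the differences $S_1-S_2$, $S_1$, $S_2$). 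The degenerate loci, foremost the diagonal $S_1=S_2$ where two of the forms coincide and the problem becomes only two--dimensional, I would peel off and bound by the corresponding two--dimensional (twin--prime type) sieve estimate $\ll q^{\,n-e}/(n-e)^{2}$.

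Finally I would sum. Controlling the averaged singular series by $\sum_{S_1,S_2\in\mathcal{M}_{e,q}}\mathfrak{S}(S_1,S_2)\ll q^{2e}$ and inserting the sieve bound, the generic (off--diagonal) part collapses to
\[
\sum_{0\le e<n/2}q^{\,n-e}\cdot\frac{q^{\,n-e}}{(n-e)^{3}}\cdot q^{2e}=q^{2n}\sum_{n/2<d\le n}\frac{1}{d^{3}}\ll\frac{q^{2n}}{n^{2}}\asymp|\mathcal{P}_{n,q}|^{2},
\]
by the convergence of $\sum d^{-3}$ and the prime polynomial theorem \eqref{Riemann Hypothesis}; the diagonal contribution is $\ll\sum_{0\le e<n/2}q^{\,2n-e}/(n-e)^{2}\ll q^{2n}/n^{2}$ as well, so both pieces match the claimed bound. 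I expect the sieve step and the uniform control of $\mathfrak{S}$ to be the main obstacle: one must verify that the three--dimensional Selberg sieve applies with level $\asymp n$ for every $e<n/2$, that Lemma~\ref{lower bound for certain sum} supplies the denominator with the correct dependence on the discriminant $\Delta$, and that averaging $\mathfrak{S}(S_1,S_2)$ over $S_1,S_2\in\mathcal{M}_{e,q}$ — including the contribution of the degenerate loci — really costs only $O(q^{2e})$. The Brun--Titchmarsh inequality of Lemma~\ref{Brun Titchmarsh inequality} together with Lemma~\ref{some useful sums} can be kept in reserve to dispose of boundary ranges and of the terms with $\deg Q$ closest to $n$.
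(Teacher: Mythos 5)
Your proposal is correct and follows essentially the same route as the paper: expand the square, reparametrise $P+h=QS$, reduce to a three-dimensional Selberg sieve on the forms $Q,\ S_1Q-h,\ S_2Q-h$ with Lemma~\ref{lower bound for certain sum} supplying the $n^{-3}$ in the denominator, and then average the resulting singular series over $S_1,S_2$ to recover $q^{2n}/n^2\asymp|\mathcal{P}_{n,q}|^2$. The only (welcome) addition is your explicit peeling-off of the diagonal $S_1=S_2$, a degenerate case on which the paper's discriminant $\Delta=AB(Ah-Bh)$ vanishes and which its argument passes over in silence.
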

\begin{proof}
Expanding square, we obtain
\begin{align*}
\Theta(n)=\sum_{\frac{n}{2}<\deg Q \leq n}\Phi(Q)\bigg(\sum_{\substack{P\in \mathcal{P}_{n}\\ P\equiv h(Q)}}1\bigg)
\bigg(\sum_{\substack{P'\in \mathcal{P}_{n}\\ P'\equiv h(Q)}}1\bigg)=\sum_{\substack{A, B\in \mathcal{M}_{\leq \frac{n}{2}}\\ \deg A=\deg B}}S(A, B),
\end{align*}
where 
\[
S(A, B):=\sum_{\substack{\frac{n}{2}<\deg Q\leq n\\ AQ+h\in \mathcal{P}_{n}\\ BQ+h\in \mathcal{P}_{n}}}\Phi(Q).
\]
\noindent
Now we have to find upper bound of the set $S(A, B)$. We define the following sets.
\begin{align*}
&\mathcal{A}:=\Big\{a_M:=M(AM+h)(BM+h):\deg (M)=n-\deg (A)\Big\},\\
&\mathcal{P}_{\Delta}:=\Big\{P\in \mathcal{P}: \deg (P)< [n/2], P\nmid \Delta\Big\},
\end{align*}
where $\Delta= AB(Ah-Bh).$
For a monic polynomial $D\in\mathbb{F}_q$, let us define
\[
\varrho (D)= \# \Big\{M \Mod D: a_M\equiv 0\Mod D\Big\}.
\]
Also let 
\[
\widetilde{Q}=\prod_{\substack{P\in \mathcal{P}_{\Delta}\\ \deg P\leq \frac{n}{2}}}P \quad \text{ and } \quad \mathcal{D} = \left\{D\in \mathcal{M}: D\, |\, \widetilde{Q}, \deg (D)< \frac{n}{5}\right\}.
\]
Observe that 
\begin{align}\label{selberg sieve inequality}
S(A, B)\leq \sum_{\substack{M\in \mathcal{M}_{n-\deg (A)}\\ a_M\in \mathcal{A}\\ (a_M,\, \widetilde{Q})=1}}|M|=q^{n-\deg(A)}\sum_{\substack{M\in \mathcal{M}_{n-\deg (A)}\\ a_M\in \mathcal{A}\\ (a_M, \, \widetilde{Q})=1}}1.
\end{align}

\noindent
Let $X_D$ be real numbers corresponding to each $D$ with $D\in \mathcal{D}$ and  $X_1=1$. 
 We use Theorem $1$ of Webb \cite{WEBB} to obtain 
 \[
 S(A, B)\leq \frac{q^{2n-2\deg (A)}}{Q}+O\left(q^{n-\deg (A)}\sum_{D_1, D_2\in \mathcal{D}}|X_{D_1}X_{D_2}|\varrho_{[D_1, D_2]}\right),
 \]
 where
 \[
 Q=\sum_{D\in \mathcal{D}}\frac{1}{g(D)}, \quad g(D)=f(D)\prod_{P\mid D}\left(1-\frac{1}{f(P)}\right)
 \]
 with $f(D)=\frac{|D|}{\varrho(D)}$.
\noindent
Since $g(D)$ is a multiplicative function on the divisors of $\widetilde{Q}$, then we have
\[
\sum_{M|D}\frac{1}{g(M)}=\frac{f(D)}{g(D)}, \quad D\in \mathcal{D}.
\]
We see that $X_D\ll 1$ for all $D\in \mathcal{D}$. The above $O$-term is bounded above by 
\[
\ll q^{n-\deg(A)}\sum_{D_1, D_2\in \mathcal{D}}3^{[D_1, D_2]}\ll q^{\frac{7n}{5}-\deg(A)}\prod_{\deg (P)\leq \frac{n}{5}}\left(1-\frac{3}{|P|}\right)^{-2}\ll n^6q^{\frac{7n}{5}-\deg(A)}.
\]
Therefore, contribution of $O$-term to $\Theta(n)$ is bounded above by
\[
\ll n^6 q^{\frac{7n}{5}}\sum_{\substack{A, B\in \mathcal{M}_{\leq \frac{n}{2}}\\ \deg A=\deg B}}\frac{1}{|A|}\ll n^6 q^{\frac{19n}{10}},
\]
which is quite small.
Using Lemma \ref{lower bound for certain sum}, we have
\begin{align*}
Q&=\sum_{M\in \mathcal{D}}\frac{1}{g(M)}\geq \sum_{M\in \mathcal{D}}\frac{1}{f(D)}=\sum_{\substack{M\in \mathcal{M}_{\leq\frac{n}{5}} \\ (M,\, \Delta)=1}}\frac{\mu^2(M)3^{\omega(M)}}{|M|}\geq cn^3 \prod_{\substack{P\, \mid \, \Delta}}\bigg(1+\frac{3}{|P|}\bigg)^{-1}
\end{align*}
where $c>0$ is an absolute constant.
Combining above results we get
\[
S(A, B)\ll \frac{q^{2n-2\deg (A)}}{n^3}\prod_{P\, \mid \, \Delta}\bigg(1+\frac{3}{|P|}\bigg). 
\]
Therefore, 
\[
\Theta(n)\ll \frac{q^{2n}}{n^3}\sum_{\substack{A, B\in \mathcal{M}_{\leq \frac{n}{2}}\\\deg A=\deg B}}q^{-2\deg A}\prod_{P|AB(A-B)h}\left(1+\frac{3}{|P|}\right).
\]
We write
\begin{align*}
&\sum_{\substack{A, B\in \mathcal{M}_{\leq \frac{n}{2}}\\\deg A=\deg B}}q^{-2\deg A}\prod_{P|AB(A-B)h}\left(1+\frac{3}{|P|}\right)\\
&=\sum_{\substack{A, B\in \mathcal{M}_{\leq \frac{n}{2}}\\\deg A=\deg B}}q^{-2\deg A}\prod_{P|A}\left(1+\frac{3}{|P|}\right)\prod_{P|B(A-B)}\left(1+\frac{3}{|P|}\right)\prod_{P|h}\left(1+\frac{3}{|P|}\right).
\end{align*}
We use that $\prod_{P|h}\left(1+\frac{3}{|P|}\right)\ll 1$
with constant depending on $q$ and $h$.
Also we find that
\begin{align*}
&\sum_{\substack{A, B\in \mathcal{M}_{\leq \frac{n}{2}}\\\deg A=\deg B}}q^{-2\deg A}\prod_{P|A}\left(1+\frac{3}{|P|}\right)\prod_{P|B(A-B)}\left(1+\frac{3}{|P|}\right)\\
&=\sum_{A\in \mathcal{M}_{\leq \frac{n}{2}}}q^{-2\deg A}\prod_{P|A}\left(1+\frac{3}{|P|}\right)\sum_{\substack{B\\ \deg B=\deg A}}\prod_{P|B(A-B)}\left(1+\frac{3}{|P|}\right).
\end{align*}
 The inner sum becomes
 \begin{align*}
& \sum_{\substack{B\\ \deg B=\deg A}}\prod_{P|B(A-B)}\left(1+\frac{3}{|P|}\right)\leq \sum_{\substack{B\\ \deg B=\deg A}}\prod_{P|B}\left(1+\frac{3}{|P|}\right)\prod_{P|(A-B)}\left(1+\frac{3}{|P|}\right)\\
 &=\sum_{\substack{B\\ \deg B=\deg A}}\sum_{D_1|B}\frac{\mu^2(D_1)3^{\omega(D_1)}}{|D_1|}\sum_{D_2|A-B}\frac{\mu^2(D_2)3^{\omega(D_2)}}{|D_2|}\\
 &= \sum_{D_1, D_2}\frac{\mu^2(D_1)\mu^2(D_2)3^{\omega(D_1)}3^{\omega(D_2)}}{|D_1D_2|}\sum_{\substack{B\in \mathcal{M}_{\deg A, q}\\ B \equiv 0 (D_1)\\ B\equiv A (D_2)}}1.
 \end{align*}
 We observe that $(D_1, D_2)|A$. Let $D=(D_1, D_2)$ and writing $D_i= DF_i$ we have $(F_i, D)=1, (F_1, F_2)=1$ and $\omega(D_i)=\omega(D)+\omega(F_i)$ for all $i=1, 2$. 

\vspace{2mm}
\noindent 
Using \eqref{upper and lower bound}, we have 
 \begin{align*}
 &\sum_{\substack{B\\ \deg B=\deg A}}\prod_{P|B(A-B)}\left(1+\frac{3}{|P|}\right)\\
 &\leq \sum_{D|A}\frac{\mu^2(D)3^{2\omega(D)}}{|D|^2}\sum_{\substack{F_i\in \mathcal{M}_{\leq \deg A-\deg D}\\ (F_1, F_2)=1\\ (F_i, D)=1}}\frac{\mu^2(F_1)\mu^2(F_2)3^{\omega(F_1)+\omega(F_2)}}{|F_1F_2|}\sum_{\substack{B'\in \mathcal{M}_{\deg A-\deg D}\\ B'\equiv 0(F_1)\\ B'\equiv \frac{A}{D}(F_2)}}1\\
 & = \sum_{D|A}\frac{\mu^2(D)3^{2\omega(D)}}{|D|^2}\sum_{\substack{F_i\in \mathcal{M}_{\leq \deg A-\deg D}\\ (F_1, F_2)=1\\ (F_i, D)=1}}\frac{\mu^2(F_1)\mu^2(F_2)3^{\omega(F_1)+\omega(F_2)}}{|F_1F_2|}\bigg(\frac{q^{\deg A-\deg D}}{|F_1F_2|}+O(1)\bigg)\\
 & = q^{\deg A}\sum_{D|A}\frac{\mu^2(D)3^{2\omega(D)}}{|D|^3}\sum_{\substack{F_i\in \mathcal{M}_{\leq \deg A-\deg D}\\ (F_1, F_2)=1\\ (F_i, D)=1}}\frac{\mu^2(F_1)\mu^2(F_2)3^{\omega(F_1)+\omega(F_2)}}{|F_1F_2|^2}\\
 &+O\bigg(\sum_{D|A}\frac{\mu^2(D)3^{2\omega(D)}}{|D|^2}\sum_{\substack{F_i\in \mathcal{M}_{\leq \deg A-\deg D}\\ (F_1, F_2)=1\\ (F_i, D)=1}}\frac{\mu^2(F_1)\mu^2(F_2)3^{\omega(F_1)+\omega(F_2)}}{|F_1F_2|}\bigg)\\
 &\ll q^{\deg A}.
\end{align*}
Hence, we conclude that
\begin{align*}
&\Theta(n)\ll \frac{q^{2n}}{n^3}\sum_{A\in \mathcal{M}_{\leq \frac{n}{2}}}q^{-deg A}\prod_{P|A}\left(1+\frac{3}{|P|}\right)\\
&=\frac{q^{2n}}{n^3}\sum_{A\in \mathcal{M}_{\leq \frac{n}{2}}}\sum_{D|A}\frac{\mu^2(D)3^{\omega(D)}}{|D|}\ll \frac{q^{2n}}{n^2}
\end{align*}
which completes proof of the lemma.
\end{proof}

\subsection{Certain estimates for large prime polynomials}
We introduce some sets which will be used to prove Theorem
\ref{main theorem 1} and Theorem \ref{main theorem 2}.
Let $h_1$, $h_2\in \mathbb{F}_q[x]$ and $A_1, A_2\in \mathbb{F}_q[x]\setminus\{0\}$ be fixed such that $\deg(h_k)<\deg(A_k)$.
For any $f\in \mathcal{M}$ and $k=1, 2$, we define
\[
\mathcal{P}_f(k):= \Big\{P\in\mathcal{P}: P^m\| A_k f+h_k \text{ and } |1-\psi_k(P^m)|>\frac{1}{2}\Big\}.
\] 
For $r<n$, we consider the following sets:
\[
\mathcal{N}_r = \Big\{f\in \mathcal{M}_{n}: \exists k\in \{1, 2\} \text{ and }  \exists  P\in \mathcal{P}_f(k) \text{ with } \deg P>r  \Big\}
\]
and taking $A_k=1$,
\[
\mathcal{Q}_r = \Big\{P\in \mathcal{P}_{n}: \exists k\in \{1, 2\} \text{ and }  \exists  Q\in \mathcal{P}_P(k) \text{ with } \deg Q>r  \Big\}.
\]
\begin{lemma}\label{cardinality-N_r-Q_r}
With notations as above, upper bound for cardinalities of the sets $\mathcal{N}_r$ and $\mathcal{Q}_r$ are as follows.
\[
|\mathcal{N}_r|\ll
q^n\sum_{j=1}^{2}\mathbb{D}(\psi_j, 1; r, n+d(A_j)) +\frac{q^{n-r}}{r}
\]
and
 \[
  |\mathcal{Q}_r|\ll 
  |\mathcal{P}_{n}|\sum_{j=1}^{2}\mathbb{D}(\psi_j, 1; r, n)
  +\frac{|\mathcal{P}_{n}|}{rq^r}+\frac{|\mathcal{P}_{n}|} {q^{\frac{n}{4}}}.
 \]
\end{lemma}

\begin{proof}

Observe that
\begin{align*}
|\mathcal{N}_r| & \ll \sum_{j=1}^{2}\sum_{\substack{f\in \mathcal{M}_{n} \\ P^m \| A_jf+h_j \\ |1-\psi_j(P^m)|>1/2 \\ \deg P >r}}1 \ll q^n\sum_{j=1}^{2}\sum_{\substack{m\deg P\leq n+d(A_j) \\ |1-\psi_j(P^m)|>1/2 \\ \deg P>r}}\frac{1}{q^{m\deg P}}\\
& \ll q^n \sum_{j=1}^{2}\sum_{r<\deg P\leq n+d(A_j)} \frac{|1-\psi_j(P)|}{q^{\deg P}}+q^n \sum_{\deg P>r}q^{-2\deg P}\\
& \ll q^n \sum_{j=1}^{2}\mathbb{D}(\psi_j, 1; r, n+d(A_j))+\frac{q^{n-r}}{r}.
\end{align*}
\noindent
Interchanging summation we get
\begin{align*}
|\mathcal{Q}_r|\ll  &\sum_{j=1}^{2}\sideset{}{^*}\sum_{\substack{k\deg Q\leq n\\ k\geq 1}}\pi_A(n; Q^k, -h_j)=\sum_{j=1}^{2}\sum_{\deg Q\leq n}^{*}\pi_A(n; Q, -h_j)\\
&+\sum_{j=1}^{2}\sideset{}{^*}\sum_{\substack{k\deg Q\leq n\\ k\geq 2}}\pi_A(n; Q^k, -h_j)
=:M_1+M_2,
\end{align*}
where $\sum^{*}$ denotes  sum over $Q\in \mathcal{P}$ satisfying $\deg (Q)>r$ and $|1-\psi_j(Q^k)|>\frac{1}{2}$.

\vspace{2mm}
\noindent
Using Lemma \ref{Brun Titchmarsh inequality} and Cauchy-Schwarz inequality, we have
\begin{align*}
M_1 &\ll \frac{q^n}{n} \sum_{j=1}^{2}\sideset{}{^*}
\sum_{r<\deg Q\leq \frac{n}{2}}\frac{1}{\Phi(Q)}
+\sum_{j=1}^{2}\bigg(\sideset{}{^*}\sum_{\frac{n}{2}<\deg Q\leq n}
\frac{1}{\Phi(Q)}\bigg)^{\frac{1}{2}}\bigg(
\sideset{}{^*}\sum_{\frac{n}{2}<\deg Q\leq n}\Phi(Q)\pi_A^2(n; Q, -h_j)\bigg)^{\frac{1}{2}}\\
&\ll \frac{q^n}{n}\sum_{j=1}^{2}
\sum_{r<\deg Q\leq \frac{n}{2}}\frac{|1-\psi_j(Q)|^2}{\Phi(Q)}
+\sum_{j=1}^{2}\bigg(
\sum_{\frac{n}{2}<\deg Q\leq n}\frac{|1-\psi_j(Q)|^2}{\Phi(Q)}\bigg)^{\frac{1}{2}} 
(\Theta(n))^{\frac{1}{2}}\\
&\ll |\mathcal{P}_{n}| 
\sum_{j=1}^{2}\mathbb{D}^2\left(\psi_j, 1; r, \frac{n}{2}\right)
+|\mathcal{P}_{n}|\sum_{j=1}^{2}\mathbb{D}\left(\psi_j, 1; \frac{n}{2}, n\right),
\end{align*}
where we used Lemma \ref{useful lemma for Turan kubilius} in the second term.  Also
\begin{align*}
M_2 \ll 
\frac{q^n}{n}\sum_{\substack{k\deg Q\leq \frac{n}{2}\\\deg Q>r;\, k\geq 2}}
\frac{1}{\Phi(Q^k)}+q^n
\sum_{\substack{\frac{n}{2}<k\deg Q\leq n\\ k\geq 2}}
\frac{1}{\Phi(Q^k)}\ll \frac{|\mathcal{P}_{n}|}{rq^r}
+\frac{|\mathcal{P}_{n}|}{q^{\frac{n}{4}}}.
\end{align*}
Combining these estimates, we have
\[
|\mathcal{Q}_r|\ll |\mathcal{P}_{n}|\sum_{j=1}^{2}\mathbb{D}(\psi_j, 1; r, n)+\frac{|\mathcal{P}_{n}|}{rq^r}+\frac{|\mathcal{P}_{n}|} {q^{\frac{n}{4}}}.
\]
\end{proof}

\subsection{Variants of Tur\'{a}n-Kubilius inequality over function field}
The following lemma is a shifted version of Tur\'{a}n-Kubilius inequality over function field in large degree limit.

\begin{lemma}\label{Turan-Kubilius inequality}
For a sequences of complex numbers $\{\psi(P^m), P\in \mathcal{P}, m\geq 1\}$, we have
\begin{align*}
\widetilde{S}:=\sum_{f \in \mathcal{M}_{n}}\bigg|\sum_{P^m\| f+h} \psi(P^m)- \sum_{m\deg P \leq n}\frac{\psi(P^m)}{q^{m\deg P}}\left(1-q^{-\deg P}\right)\bigg|^2 \ll q^n\sum_{m\deg P \leq n}\frac{|\psi(P^m)|^2}{q^{m\deg P}}
\end{align*}

\vspace{2mm}
\noindent
where $h$ is some fixed polynomial with $\deg h<n$.
\end{lemma}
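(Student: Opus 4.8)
\emph{Proof proposal.} The plan is to read $\tilde S$ as a second--moment (variance) computation for the additive--type statistic $Z(f):=\sum_{P^m\|(f+h)}\psi(P^m)$ over a uniformly chosen $f\in\mathcal M_{n,q}$. Writing $\pi(P^m):=\frac1{q^n}\#\{f\in\mathcal M_{n,q}:P^m\|(f+h)\}$ for the frequency with which $P^m$ exactly divides $f+h$, the inclusion--exclusion $\mathbf 1[P^m\|(f+h)]=\mathbf 1[P^m\mid f+h]-\mathbf 1[P^{m+1}\mid f+h]$, together with the exact count $\#\{f\in\mathcal M_{n,q}:D\mid f+h\}=q^{\,n-\deg D}$ (valid for $\deg D\le n$ since $f+h$ is monic of degree $n$), gives $\pi(P^m)=q^{-m\deg P}-q^{-(m+1)\deg P}\mathbf 1[(m+1)\deg P\le n]$. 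Thus the centering constant in the Lemma equals $\sum_{m\deg P\le n}\psi(P^m)\pi(P^m)=\mathbb E[Z]$ up to an explicit discrepancy supported on the top prime powers $(m+1)\deg P>n\ge m\deg P$, and I would first record $\tilde S=\sum_f|Z(f)-E|^2\le 2\sum_f|Z(f)-\mathbb E[Z]|^2+2q^n|\mathbb E[Z]-E|^2$, the last term being controlled by the summation estimates of Lemma~\ref{some useful sums}.

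For the principal term I would expand the square, obtaining $\sum_f|Z(f)-\mathbb E[Z]|^2=q^n\sum_{P,m}\sum_{Q,l}\psi(P^m)\overline{\psi(Q^l)}\,\mathrm{Cov}\big(\mathbf 1[P^m\|(f+h)],\mathbf 1[Q^l\|(f+h)]\big)$, and evaluate each covariance exactly by the Chinese Remainder Theorem (Lemma~\ref{Chinese remainder theorem}): for $P\ne Q$ the congruences $f\equiv-h\ (P^a)$ and $f\equiv-h\ (Q^b)$ are simultaneously solvable and determine a single class modulo $P^aQ^b$, so the joint count factors as the product of the individual counts whenever $a\deg P+b\deg Q\le n$. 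The decisive structural fact is therefore that the covariance vanishes \emph{identically} in the bulk range $(m+1)\deg P+(l+1)\deg Q\le n$; in particular every off--diagonal ($P\ne Q$) bulk term cancels exactly, with no error. The surviving diagonal terms $P=Q,\ m=l$ contribute $\sum_{P,m}|\psi(P^m)|^2\pi(P^m)\le\sum_{m\deg P\le n}|\psi(P^m)|^2q^{-m\deg P}$, which after the factor $q^n$ is precisely the right--hand side.

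Two families of lower--order terms then remain. The same--prime terms ($P=Q$, $m\ne l$) are bounded by $\sum_P\big(\sum_m|\psi(P^m)|q^{-m\deg P}\big)^2$, and Cauchy--Schwarz together with $\sum_{m\ge1}q^{-m\deg P}\ll q^{-\deg P}\le 1$ absorbs them into the right--hand side. The genuinely delicate contribution is the boundary off--diagonal: the covariances with $P\ne Q$, $m\deg P,\,l\deg Q\le n$, but $(m+1)\deg P+(l+1)\deg Q>n$. Each such covariance is $O(q^{-m\deg P-l\deg Q})$ and is supported on this narrow boundary strip, and handling it is the step I expect to be the main obstacle. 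A crude estimate replacing each covariance by its absolute value and symmetrising via $|\psi(P^m)\psi(Q^l)|\le\frac12(|\psi(P^m)|^2+|\psi(Q^l)|^2)$ produces the inner sum $\sum_{l\deg Q>n-m\deg P}q^{-l\deg Q}$, which by Lemma~\ref{some useful sums}(b) is of size $\log\frac{n}{\,n-m\deg P\,}$ and so threatens a spurious logarithmic loss for prime powers of degree close to $n$. To reach the clean bound with an absolute constant one must instead retain the sign structure of the (at most four) inclusion--exclusion pieces that make up each boundary covariance and exploit the extra decay they carry, so that summation against $q^{-l\deg Q/2}$--type weights converges geometrically and kills the logarithm; equivalently, one passes to the dual (large--sieve) form of the inequality, where the clean constant appears automatically. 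Carrying out this boundary bookkeeping, with the residual sums over prime powers controlled by Lemma~\ref{some useful sums}(c), completes the proof.
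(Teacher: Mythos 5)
Your setup---reading $\tilde S$ as a variance, evaluating the joint counts by the Chinese Remainder Theorem, observing that the bulk off-diagonal covariances cancel exactly, and extracting the right-hand side from the diagonal---matches the skeleton of the paper's argument. But the proof is not complete: the boundary off-diagonal terms, which you yourself flag as ``the main obstacle,'' are precisely where the content of the lemma lies, and neither of your two suggested remedies (retaining the sign structure of the four inclusion--exclusion pieces, or passing to a dual / large-sieve form) is carried out. As written, the symmetrised bound you do give produces exactly the logarithmic loss you anticipate, so the argument stops short of the stated inequality with an absolute constant.

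The device you are missing is the classical Tur\'an--Kubilius splitting, which is how the paper avoids the boundary strip altogether rather than estimating it. Write $\psi=\psi_1+\psi_2$, where $\psi_1$ is supported on prime powers with $m\deg P\le n/2$ and $\psi_2$ on those with $m\deg P>n/2$, and bound the two contributions separately at the cost of an absolute factor. For $\psi_1$, both $m\deg P$ and $l\deg Q$ are at most $n/2$, so $m\deg P+l\deg Q\le n$ and the joint divisor counts entering the covariance are exact and factor; the paper checks that the coefficient of each off-diagonal product $\psi(P^m)\overline{\psi(Q^l)}$ then vanishes identically, leaving only the diagonal, which is absorbed into the right-hand side via Lemma~\ref{some useful sums} and Cauchy--Schwarz. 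For $\psi_2$ no covariance computation is needed at all: a monic polynomial $f+h$ of degree $n$ admits at most one exact prime-power divisor $P^m$ with $m\deg P>n/2$ (two distinct ones would force $\deg(f+h)>n$), so the inner sum $\sum_{P^m\|f+h}\psi_2(P^m)$ consists of at most one term and the whole contribution is handled directly by Cauchy--Schwarz and Lemma~\ref{some useful sums}. This is the step that delivers the clean constant, and you should adopt it in place of the head-on attack on the boundary covariances.
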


\begin{proof}
First we assume that $\psi(P^m)=0$ for all irreducible polynomials $P$ with $m\deg (P)>\frac{n}{2}$.
By opening square of modulus on the left hand side, 
the coefficient of $\psi(P^m)\overline{\psi(Q^r)}$ for distinct irreducible polynomials
$P$ and $Q$, is
\begin{align*}
&\sum_{\substack{ f\in \mathcal{M}_{n} \\ P^m, Q^r \| f+h}} 1
- \sum_{\substack{ f\in \mathcal{M}_{n} \\ P^m \| f+h }}\frac{1-q^{-\deg Q}}{q^{r\deg Q}}
- \sum_{\substack{ f\in \mathcal{M}_{n} \\ Q^r \| f+h }}\frac{1-q^{-\deg P}}{q^{m\deg P}}\\
&+ \sum_{ f\in\mathcal{M}_{n}}
\frac{\left(1-q^{-\deg P}\right)\left(1-q^{-\deg Q}\right)}{q^{m\deg P+r\deg Q}}.
\end{align*}
Observe that
\[
\sum_{f \in \mathcal{M}_{n}}\sum_{P^m, Q^r \| f+h} 1 
= q^n\frac{\left(1-q^{-\deg P}\right)\left(1-q^{-\deg Q}\right)}{q^{m\deg P+r\deg Q}}.
\]
By treating three other sums analogously, we  find that the coefficient of 
$\psi(P^m)\overline{\psi(Q^r)}$ is zero. Therefore only diagonal terms have non-zero coefficients. 
It is easy to see that coefficient of $|\psi(P^m)|^2$ is $\le q^{n-m\deg(P)}$.
Thus the diagonal term is bounded above by
\[
 \le q^n\sum_{m\deg P \leq n}\frac{|\psi(P^m)|^2}{q^{m\deg P}}.
\]
\noindent
If we assume that $\psi(P^m)=0$ for all monic irreducible polynomials 
$P$ with $m\deg P\leq n/2$. Therefore, if $f \in \mathcal{M}_{n}$, 
there exist at most one prime polynomial power $P^m \| f+h$ such that $\psi(P^m)\neq 0$. 
So we have
\begin{align*}
\widetilde{S}\ll q^n\sum_{m\deg P \leq n}\frac{|\psi(P^m)|^2}{q^{m\deg P}}.
\end{align*}

\noindent
Finally, we write a general $\psi$ as $\psi_1+\psi_2$, where $\psi_1(P^m)=0$ for all monic irreducible  polynomials with $m\deg P>n/2$ and $\psi_2(P^m)=0$ with $m\deg P\leq n/2$ and combining above calculation we get the required result.
\end{proof}

\vspace{2mm}
\noindent
As a direct consequence of Lemma \ref{Turan-Kubilius inequality}, using Lemma \ref{some useful sums} and Cauchy-Schwarz inequality twice we get the following version of Tur\'{a}n-Kubilius inequality over function field.

\begin{lemma}\label{useful turan-kubilius inequality}
For a sequences of complex numbers $\{a(P^m), P\in \mathcal{P}, m\geq 1\}$, we have
\[
\sum_{f \in \mathcal{M}_{n}}\bigg|
\sum_{P^m\|f+h}a(P^m)- \sum_{m\deg P \leq n}\frac{a(P^m)}{q^{m\deg P}}\bigg| 
\ll q^n\bigg(\sum_{m\deg P \leq n}\frac{|a(P^m)|^2}{q^{m\deg P}}\bigg)^{1/2}
\]
where $h$ is some fixed polynomial of $\deg h<n$.
\end{lemma}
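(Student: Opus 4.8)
The plan is to deduce the stated $L^1$ estimate from the $L^2$ estimate of Lemma \ref{Turan-Kubilius inequality} by two applications of Cauchy--Schwarz, after first correcting for the mismatch between the two centering terms appearing in the two lemmas. Write
\[
Y(f):=\sum_{P^m\|f+h}\psi(P^m)-\sum_{m\deg P\leq n}\frac{\psi(P^m)}{q^{m\deg P}},
\]
\[
X(f):=\sum_{P^m\|f+h}\psi(P^m)-\sum_{m\deg P\leq n}\frac{\psi(P^m)}{q^{m\deg P}}\big(1-q^{-\deg P}\big),
\]
so that $Y(f)$ is the quantity whose first absolute moment must be bounded, while $X(f)$ is exactly the quantity whose second moment $\tilde S$ is controlled by Lemma \ref{Turan-Kubilius inequality}. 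Their difference is the $f$-independent quantity
\[
Y(f)-X(f)=-\sum_{m\deg P\leq n}\frac{\psi(P^m)}{q^{(m+1)\deg P}}=:-E.
\]

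First I would pass from the second moment to the first moment. Since $|\mathcal{M}_{n,q}|=q^n$, Cauchy--Schwarz over $f\in\mathcal{M}_{n,q}$ gives
\[
\sum_{f\in\mathcal{M}_{n,q}}|X(f)|\leq\Big(\sum_{f\in\mathcal{M}_{n,q}}1\Big)^{1/2}\Big(\sum_{f\in\mathcal{M}_{n,q}}|X(f)|^2\Big)^{1/2}=q^{n/2}\,\tilde S^{1/2}.
\]
Inserting the bound $\tilde S\ll q^n\sum_{m\deg P\leq n}|\psi(P^m)|^2 q^{-m\deg P}$ from Lemma \ref{Turan-Kubilius inequality} yields
\[
\sum_{f\in\mathcal{M}_{n,q}}|X(f)|\ll q^n\Big(\sum_{m\deg P\leq n}\frac{|\psi(P^m)|^2}{q^{m\deg P}}\Big)^{1/2}.
\]

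It remains to bound the contribution of the correction $E$, since by the triangle inequality $\sum_f|Y(f)|\leq\sum_f|X(f)|+q^n|E|$. Here the second Cauchy--Schwarz enters: splitting $q^{-(m+1)\deg P}=q^{-m\deg P/2}\cdot q^{-(m+2)\deg P/2}$ and applying Cauchy--Schwarz to the sum over prime powers,
\[
|E|\leq\Big(\sum_{m\deg P\leq n}\frac{|\psi(P^m)|^2}{q^{m\deg P}}\Big)^{1/2}\Big(\sum_{m\deg P\leq n}\frac{1}{q^{(m+2)\deg P}}\Big)^{1/2}.
\]
By part (c) of Lemma \ref{some useful sums} the last sum is $O(1)$, being dominated by $\sum_{m\deg P\leq n}q^{-(m+1)\deg P}=O(1)$, so $|E|\ll\big(\sum_{m\deg P\leq n}|\psi(P^m)|^2 q^{-m\deg P}\big)^{1/2}$. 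Combining the two displayed bounds then gives the claimed estimate.

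The argument is largely bookkeeping once Lemma \ref{Turan-Kubilius inequality} is available; the one point that needs a little care — which I regard as the main (if minor) obstacle — is that the two lemmas are normalised with different mean terms, the factor $(1-q^{-\deg P})$ being present in Lemma \ref{Turan-Kubilius inequality} but absent in the statement to be proved. One must verify that removing it costs only the harmless deterministic correction $E$, whose size is again controlled by the same $L^2$ norm of $\psi$ via Lemma \ref{some useful sums}; without this check one would be comparing incompatible centerings and the deduction would not be \emph{direct}.
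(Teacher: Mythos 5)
Your proposal is correct and matches the route the paper intends: the paper gives no written proof of this lemma but states it follows from Lemma \ref{Turan-Kubilius inequality} ``using Lemma \ref{some useful sums} and Cauchy--Schwartz inequality twice,'' which is precisely your argument (one Cauchy--Schwarz over $f\in\mathcal{M}_{n,q}$ to pass from the second moment to the first, and one over prime powers, together with $\sum_{m\deg P\leq n}q^{-(m+1)\deg P}=O(1)$, to absorb the deterministic correction coming from the factor $1-q^{-\deg P}$). Your explicit handling of the mismatch in centering terms is the right and only nontrivial point, and it is carried out correctly.
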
 

\noindent
The following lemma is an analog of Lemma \ref{useful turan-kubilius inequality} for irreducible polynomials.
\begin{lemma}\label{useful turan-kubilius over primes}
Let $h$ be a fixed polynomial of $\deg h<n$. 
For a sequences of complex numbers $\{a(P^m), P\in \mathcal{P}, m\geq 1\}$, we have
\[
\sum_{P\in \mathcal{P}_{n}}\Big|
\sum_{Q^k\|P+h}a(Q^k)-A(n)\Big|\ll |\mathcal{P}_{n}|B(n)
\]
\noindent
where 
\[
A(n):=\sum_{\substack{Q\in \mathcal{P} \\ k\deg Q\leq n}}
\frac{a(Q^k)}{|Q^k|}
\ \text{ and } 
B^2(n):=\sum_{\substack{Q\in \mathcal{P} \\ k\deg Q\leq n}}
\frac{|a(Q^k)|^2}{\Phi(Q^k)}.
\]
\begin{proof}
For $m<n$, using triangle inequality we have
\begin{align}\label{three sum}
&\sum_{P\in \mathcal{P}_{n}}\Big|\sum_{Q^k\|P+h}\psi(Q^k)-A(n)\Big|\leq \sum_{P\in \mathcal{P}_{n}}\Big|\sum_{\substack{Q^k\|P+h\\ k\deg (Q)\leq m}}\psi(Q^k)-A(m)\Big|\\
&+\sum_{P\in \mathcal{P}_{n}}\Big|\sum_{\substack{Q^k\|P+h\\ k\deg (Q)\leq m}}\psi(Q^k)\Big|+\sum_{P\in \mathcal{P}_{n}}|A(n)-A(m)|=: L_1+L_2+L_3, \nonumber
\end{align}
 Using Cauchy-Schwarz inequality and Lemma \ref{prime polynomial theorem}, we get
\[
L_1\leq \bigg(\sum_{P\in \mathcal{P}_{n}}1\bigg)^{\frac{1}{2}}
\bigg(\sum_{P\in \mathcal{P}_{n}}\Big|\sum_{Q^k\|P+h}\psi(Q^k)-A(m)\Big|^2\bigg)^{\frac{1}{2}}
\leq \frac{q^{\frac{n}{2}}}{n^{\frac{1}{2}}} L_{4}^{\frac{1}{2}},
\]
where 
\[
L_4:=\sum_{P\in \mathcal{P}_{n}}\Big|\sum_{Q^k\|P+h}\psi(Q^k)-A(m)\Big|^2.
\]
Note that
\begin{align*}
\sum_{\substack{P\in \mathcal{P}_{n}\\ Q^{k}\|P+h}}1=\pi_A(n, Q^k, -h)-\pi_A(n, Q^{k+1}, -h)
\end{align*}
and 
\begin{align*}
\sum_{\substack{P\in \mathcal{P}_{n}\\ Q_1^{k_1}, Q_2^{k_2}\|P+h}}1 &= \pi_A(n, Q_1^{k_1}Q_2^{k_2}, -h)-\pi_A(n, Q_1^{k_1+1}Q_2^{K_2}, -h)\\
&-\pi_A(n, Q_1^{k_1}Q_2^{k_2+1}, -h)+\pi_A(n, Q_1^{k_1+1}Q_2^{k_2+1}, -h).
\end{align*}
Choosing $m=\frac{n}{4}$, we use \eqref{primes in AP over function field} and by simplifying square of modulus of $L_{4}$, we observe that
\begin{align*}
L_4&=\frac{q^n}{n}\sum_{k\deg Q\leq m}\frac{|\psi(Q^k)|^2}{\Phi(Q^k)}\bigg(1-\frac{1}{q^{\deg Q}}\bigg)\bigg(1-\frac{1}{q^{k\deg Q}}\bigg)\\
&+O\bigg(\frac{q^{\frac{n}{2}+2m}}{nm}\sum_{k\deg Q\leq m}\frac{|\psi(Q^k)|^2}{\Phi(Q^k)}\bigg).
\end{align*}
Thus, we have
\[
L_4\ll \frac{q^n}{n}B^2(n).
\]
The next term of \eqref{three sum} gives
\begin{align*}
L_2 & \le \sum_{n/4 <k\deg Q\le n}|a(Q^k)|\pi(n, Q^k, -h) \\
&=\sum_{n/4 <k\deg Q\le n/2}|a(Q^k)|\pi(n, Q^k, -h)+\sum_{n/2 <k\deg Q\le n}|a(Q^k)|\pi(n, Q^k, -h)\\
&:=L_5+L_6
\end{align*}
It is easy to show using Cauchy-Schwarz inequality that
\[
L_5\ll \frac{q^n}{n} B(n).
\] 
Using Lemma \ref{Brun Titchmarsh inequality}, Lemma \ref{useful lemma for Turan kubilius} and Cauchy-Schwarz inequality,  we have
\begin{align*}
L_6=&\sum_{P\in \mathcal{P}_{n}}\bigg|\sum_{\substack{Q^k\|P+h\\ \frac{n}{2}<k\deg Q\leq n}}a(Q^k)\bigg|\leq \sum_{\substack{\frac{n}{2}<k\deg Q\leq n}}|a(Q^k)|\pi_A(n; Q^k, -h) \\ 
&\ll  \bigg(\sum_{\substack{\frac{n}{2}<k\deg Q\leq n\\ k\geq 1}}
\frac{|a(Q^k)|^2}{\Phi(Q^k)}\bigg)^{\frac{1}{2}} \bigg(\sum_{\substack{\frac{n}{2}<k\deg Q\leq n\\ k\geq 1}}\Phi(Q^k)\pi_A^2(n; Q^k, -h)\bigg)^{\frac{1}{2}}\\
&\ll B(n)\Theta(n)^{\frac{1}{2}}+ B(n)q^n \bigg(\sum_{\substack{\frac{n}{2}<k\deg Q\leq n\\ k\geq 2}}\frac{1}{\Phi(Q^k)}\bigg)^{\frac{1}{2}}\\
&\ll B(n)\frac{q^n}{n} + B(n)\frac{q^n}{q^{n/8}}\ll \frac{q^n}{n} B(n).
\end{align*} 
It is easy to show using Cauchy-Schwarz inequality that
\[
 |A(m)-A(n)|\le \left( \sum_{m<k\deg Q\le n}\frac{|a(Q^k)|^2}{\phi(Q^k)}\right)^{1/2}
 \left(\sum_{m<k\le n}\frac{\phi(Q^k)}{|Q^k|^2}\right)^{1/2}
 \ll B(n)
\]
for any $m<n$ and thus the last term of \eqref{three sum} becomes
\[
L_3\ll \frac{q^n}{n}B(n). 
\]
This completes the proof of lemma.
\end{proof}
\end{lemma}

\subsection{Probabilistic set-up over $\mathbb{F}_q[x]$}
Let $\psi:\mathcal{M}\to \mathbb{R}$ be a real valued additive function. Define  $\Omega:=\mathcal{M}_{n}$, which is a finite set of $q^n$ elements and $\psi_n$ to be the restriction of $\psi$ to $\mathcal{M}_n$. Let $\psi(\Omega)=\{x_1, \ldots, x_l\}$ be an enumeration. The subsets $A_i:=\{f \in \Omega: \psi_n(f)=x_i \}, \quad i=1, \ldots, l$, of $\Omega$ are  pairwise disjoint and form a partition of $\Omega$. The $\sigma$-algebra $\mathfrak{F}$ generated by this partition consists of union of a finite number of subsets $A_i$.
For $A \in \mathfrak{F}$, let $\nu (A)=\frac{|A|}{q^n}$, where $|A|$ is the cardinality of $A$. Then $\nu$ is a probability measure on $\mathfrak{F}$ and $(\Omega, \mathfrak{F}, \nu)$ is a finite probability space. Now $\psi_n$ is a random variable on $(\Omega, \mathfrak{F}, \nu)$. The distribution function of $\psi_n$ is 
\[
\nu_n(\psi, x)=\frac{1}{q^n}\Big|\{f \in \mathcal{M}_{n}: \psi_n(f)\leq x\}\Big|.
\]
 
\noindent
\begin{definition}
If there exists a distribution function $\Psi$ such that $\frac{1}{q^n}\nu_n(\psi, x)$ converges point-wise to $\Psi(x)$ as $n\rightarrow \infty$ ,
then we say that $\psi$ has the limit distribution function $\Psi$.
\end{definition}

\vspace{2mm}
\noindent
Associated with a distribution function $F(x)$, the characteristic function is defined by
\[
\phi(t)=\int_{-\infty}^{\infty}e^{itx}dF(x).
\]
 This characteristic function is defined for all real values of $t$. It is uniformly continuous for $-\infty<t< \infty$ and satisfies $\phi(0)=1$, $|\phi(t)|\leq 1$.

\begin{lemma}[{\cite{TEN}}, Theorem $3$]\label{continuity theorem}
Let $\left\lbrace F_n\right\rbrace_{n=1}^{\infty}$ be a sequence of distribution functions and $\left\lbrace\phi_n\right\rbrace_{n=1}^{\infty}$ be the corresponding sequence of characteristic functions. Then $F_n$ converges weakly to the distribution function $F$ if and only if $\phi_n$ converges pointwise on $\mathbb{R}$ to a function $\phi$ which is continuous at $0.$ Moreover, $\phi$ is the characteristic function of $F$ and the convergence of $\phi_n$ to $\phi$ is uniform on any compact subset.
\end{lemma}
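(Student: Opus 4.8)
The plan is to establish both directions of this equivalence, which is L\'evy's continuity theorem, and then to deduce the two concluding assertions. The routine direction is that weak convergence forces pointwise convergence of the characteristic functions: if $F_n$ converges weakly to $F$, then for each fixed $t\in\mathbb{R}$ the map $x\mapsto e^{itx}$ is bounded and continuous, so by the very definition of weak convergence
\[
\phi_n(t)=\int_{-\infty}^{\infty}e^{itx}\,dF_n(x)\longrightarrow\int_{-\infty}^{\infty}e^{itx}\,dF(x)=\phi(t),
\]
and $\phi$, being a characteristic function, is continuous on all of $\mathbb{R}$, in particular at $0$.

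The substantive direction is the converse. Assuming $\phi_n\to\phi$ pointwise with $\phi$ continuous at $0$, I would first show that the family $\{F_n\}$ is tight. The mechanism is the standard tail inequality: writing $1-\Re\,\phi_n(t)=\int_{-\infty}^{\infty}(1-\cos tx)\,dF_n(x)$, integrating in $t$ over $[-u,u]$ and applying Fubini yields
\[
\frac{1}{u}\int_{-u}^{u}\bigl(1-\Re\,\phi_n(t)\bigr)\,dt=2\int_{-\infty}^{\infty}\Bigl(1-\frac{\sin ux}{ux}\Bigr)\,dF_n(x)\ge \int_{|x|\ge 2/u}dF_n(x),
\]
where the last step uses the elementary bound $1-\tfrac{\sin\theta}{\theta}\ge\tfrac12$ for $|\theta|\ge 2$. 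Since $0\le 1-\Re\,\phi_n\le 2$ and $\phi_n\to\phi$ pointwise, dominated convergence replaces $\phi_n$ by $\phi$ in the limit; and because $\phi(0)=\lim_n\phi_n(0)=1$ together with continuity of $\phi$ at $0$ forces $\frac{1}{u}\int_{-u}^{u}(1-\Re\,\phi(t))\,dt\to 0$ as $u\to 0$, the tails $\int_{|x|\ge 2/u}dF_n(x)$ are made uniformly small. This is precisely tightness.

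With tightness established, I would invoke Helly's selection theorem: every subsequence of $\{F_n\}$ has a further subsequence converging weakly to a genuine distribution function. By the routine direction already proved, the characteristic function of any such subsequential limit equals $\phi$; by the uniqueness theorem for characteristic functions all these limits coincide with a single distribution function $F$ whose characteristic function is $\phi$. A standard subsequence argument then promotes this to weak convergence of the whole sequence, $F_n\to F$, giving the ``if'' direction and simultaneously the assertion that $\phi$ is the characteristic function of $F$. For the final claim, uniform convergence on compacta follows from uniform equicontinuity of the family: the bound $|\phi_n(t)-\phi_n(s)|\le\int_{-\infty}^{\infty}|e^{i(t-s)x}-1|\,dF_n(x)$, split at $|x|=A$ and controlled using tightness, shows the $\phi_n$ are uniformly equicontinuous, so pointwise convergence upgrades to uniform convergence on compact sets by an Arzel\`a--Ascoli argument.

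The main obstacle is the tightness step, namely converting the hypothesis that $\phi$ is continuous at the single point $0$ into uniform control of the tails of all of the $F_n$. The displayed tail inequality is exactly the device that accomplishes this, and its careful verification (via Fubini together with the nonnegativity $1-\cos\theta\ge 0$ and the bound for $1-\sin\theta/\theta$) is the crux; the remaining steps are soft compactness and uniqueness arguments.
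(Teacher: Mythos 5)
Your proof is correct: it is the standard argument for L\'evy's continuity theorem, with the tightness step carried out via the correct truncation inequality (the factor $2$ in front of $\int(1-\tfrac{\sin ux}{ux})\,dF_n$ combining with the bound $1-\tfrac{\sin\theta}{\theta}\ge\tfrac12$ for $|\theta|\ge 2$ to yield exactly the tail mass), followed by Helly selection, uniqueness of characteristic functions, and equicontinuity for uniform convergence on compacta. Note, however, that the paper does not prove this lemma at all; it is quoted directly from Tenenbaum's book (Theorem 3 of the cited reference), so there is no in-paper argument to compare against --- your write-up is essentially the proof one would find in that reference.
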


\section{Proof of Theorem \ref{main theorem 1}}
We begin by spliting $\psi_1, \psi_2$ into parts, one is trivial on large primes and other on small primes.
For $r\geq 1$ and $j=1, 2$, we define multiplicative functions $\psi_{jr}$ and $\psi_{jr}^{*}$, by
\begin{align*}
\psi_{jr}(P^m)=
\begin{cases}
\psi_j(P^m) \ & \text{if } \deg P\le r, \\
1 \ & \text{if } \deg P >r,
\end{cases} \quad \text{and} \quad
\psi_{jr}^{*}(P^m)=
\begin{cases}
1 \ & \text{ if } \deg P\le r, \\
\psi_j(p^m) \ & \text{if } \deg P>r.
\end{cases}
\end{align*}
We use M\"{o}bius inversion to define  
\begin{equation*}
\alpha_{jr}(P^m)=
\begin{cases}
\psi_j(P^m)-\psi_j(P^{m-1}) \ & \text{if } \deg P\le r,\\
0 \ & \text{if } \deg P>r,
\end{cases}
\end{equation*}
so that $\psi_{jr}=1\ast \alpha_{jr}$, $j=1, 2$.

\begin{lemma}\label{alpha_jr-sum}
 For each $j=1,2$ and for any $\beta\in (0,1)$, we have
 \[
  \sum_{g\in \mathcal{M}}\frac{|\alpha_{jr}(g)|}{|g|^\beta}
  \ll \exp\left(c\frac{q^{(1-\beta)r}}{r}\right)
 \]
 for some absolute constant $c>0$.
\end{lemma}

\begin{proof}
 Since $\alpha_{jr}$ is multiplicative, we can write
 \[
  \sum_{g\in \mathcal{M}}\frac{|\alpha_{jr}(g)|}{|g|^\beta}
  \le \prod_{\substack{P\in \mathcal{P} \\ \deg(P)\le r}}
  \left(1+\sum_{m=1}^\infty\frac{|\alpha_{jr}(P^m)|}{|P^m|^\beta}\right)
 \]
Recall that $\alpha_{jr}(P^m)=\psi_{jr}(P^m)-\psi_{jr}(P^{m-1})$ and hence 
$|\alpha_{jr}(P^m)|\le 2$.
Therefore the last product is bounded above by
\begin{align*}
 \prod_{\substack{P\in \mathcal{P} \\ \deg(P)\le r}}
  \left(1+\sum_{m=1}^\infty\frac{2}{q^{\beta m\deg(P)}}\right)
 & \le \exp\Bigg(2\sum_{\substack{P\in \mathcal{P} \\ \deg(P)\le r}}
 \sum_{m=1}^{\infty}\frac{1}{q^{\beta m\deg(P)}}\Bigg)\\
 & \le \exp\Bigg( c\sum_{\substack{P\in \mathcal{P} \\ \deg(P)\le r}}
 \frac{1}{q^{\beta\deg(P)}}\Bigg)
\end{align*}
for a suitable $c>0$. This completes the proof using $|\mathcal{P}_{m}|\ll \frac{q^m}{m}$ for
any $m$. 

\end{proof}
\noindent
We write
\begin{align*}
& \frac{S_2(n, q)}{q^n}- Q(n)=Q(r, n)\bigg(\frac{1}{q^n}\sum_{f \in \mathcal{M}_{n}}\psi_{1r}(A_1f+h_1)\psi_{2r}(A_2f+h_2)-Q(r)\bigg) 
\\
& + \frac{1}{q^n}\sum_{f \in \mathcal{M}_{n}}\psi_{1r}(A_1f+h_1)\psi_{2r}(A_2f+h_2)\Big(\psi_{1r}^{*}(A_1f+h_1)\psi_{2r}^{*}(A_2f+h_2)-Q(r, n)\Big).
\end{align*}
\noindent
Observe that  $Q(r, n)\ll 1$. Therefore, we have
\begin{align}
\frac{S_2(n, q)}{q^n}- Q(n)& \ll \Big|\frac{1}{q^n}\sum_{f \in \mathcal{M}_{n}}\psi_{1r}(A_1f+h_1)\psi_{2r}(A_2f+h_2)-Q(r)\Big| \\
& + \frac{1}{q^n}\sum_{f \in \mathcal{M}_{n}}\Big|\psi_{1r}^{*}(A_1f+h_1)\psi_{2r}^{*}(A_2f+h_2)-Q(r, n)\Big| \nonumber.
\end{align}
Now we see that
\begin{align*}
&\sum_{f \in \mathcal{M}_{n}}\psi_{1r}(A_1f+h_1)\psi_{2r}(A_2f+h_2)= \sum_{f\in \mathcal{M}_{n}}\sum_{g_1| A_1f+h_1}\alpha_{1r}(g_1) \sum_{g_2| A_2f+h_2}\alpha_{2r}(g_2) \\
& = \sum_{\substack{g_j\in \mathbb{F}_q[x]\setminus \{0\}\\d(g_j) \leq n+d(A_j)\, \forall j\\ (g_1, g_2)\,\mid \, \Delta \\ P\, \mid \, g_j\Longrightarrow d(P)\leq r}}\alpha_{1r}(g_1)\alpha_{2r}(g_2)\sum_{\substack{f\in \mathcal{M}_{n} \\ g_1 \mid A_1f+h_1 \\ g_2\mid A_2f+h_2}}1 .
\end{align*}
By using Lemma \ref{Chinese remainder theorem}, we have 
\[
\big|\{f \in \mathcal{M}_{n}: A_1f+h_1 \equiv 0 \Mod{g_1}, A_2f+h_2 \equiv 0 \Mod{g_2} \}\big|= \frac{q^n}{|[g_1, g_2]|}+O(1),
\]
whenever $(g_1, g_2)|(A_1h_2-A_2h_1)$.
Therefore we obtain
\begin{align*}
&\sum_{f \in \mathcal{M}_{n}}\psi_{1r}(A_1f+h_1)\psi_{2r}(A_2f+h_2)=q^n\sum_{\substack{g_j\in \mathbb{F}_q[x]\setminus \{0\}\\d(g_j) \leq n+d(A_j)\, \forall j\\ (g_1, g_2)\,\mid \, \Delta \\ P\, \mid \, g_j\Longrightarrow d(P)\leq r}}\frac{\alpha_{1r}(g_1)\alpha_{2r}(g_2)}{|[g_1, g_2]|}+\\
& + O\bigg(\sum_{\substack{d(g_j) \leq n+d(A_j)\\ \forall j=1, 2}}|\alpha_{1r}(g_1)\alpha_{2r}(g_2)|\bigg)=:M_1+E_1.
\end{align*}
So, we have
\begin{align*}
M_1 & = q^n \sum_{\substack{g_j\in \mathbb{F}_q[x]\setminus \{0\} \, \forall j \\ (g_1, g_2)\, \mid \, \Delta\\ P\, \mid \, g_j\Longrightarrow d(P)\leq r}}\frac{\alpha_{1r}(g_1)\alpha_{2r}(g_2)}{|[g_1, g_2]|}+O\bigg(q^n \sum_{\deg (g_1)>n+d(A_1)}\sum_{g_2\in \mathbb{F}_q[x]\setminus \{0\}}\frac{|\alpha_{1r}(g_1)\alpha_{2r}(g_2)|}{|[g_1, g_2]|}\bigg)\\
&=q^n Q(r)+E_2.
\end{align*}
Since $(g_1, g_2)|\Delta$ and $\Delta$ is a fixed polynomial we have $|(g_1, g_2)|\ll 1$ with constant depending on $q, A_j$ and $h_j$. By writing $[g_1, g_2]=\frac{g_1g_2}{(g_1, g_2)}$ we get
\begin{align*}
E_2\ll q^n \sum_{\deg (g_1)>n+d(A_1)}\frac{|\alpha_{1r}(g_1)|}{|g_1|}\sum_{g_2 \in \mathbb{F}_q[x]\setminus\{0\}}\frac{|\alpha_{2r}(g_2)|}{|g_2|}.
\end{align*}
Using Lemma \ref{some useful sums} (b), we observe that 
\begin{align*}
\sum_{g\in \mathbb{F}_q[x]\setminus\{0\}}\frac{|\alpha_{jr}(g)|}{|g|}&\ll\prod_{\deg P\leq r}\bigg(1+\sum_{k=1}^{\infty}\frac{|\alpha_{jr}(P^k)|}{q^{k\deg P}}\bigg)\ll \prod_{\deg P\leq r}\bigg(1+\frac{2}{q^{\deg P}-1}\bigg)\\
&\ll  \exp\Big(c\sum_{\deg P\leq r}q^{-\deg P}\Big)\ll r^{c_1}
\end{align*}
for some constant $c, c_1>0$. For $0<\alpha <1$, using Lemma \ref{some useful sums} (a), we have
\begin{align*}
\sum_{\deg (g)>n+d(A_j)}\frac{|\alpha_{jr}(g)|}{|g|} & \ll \frac{1}{q^{(n+d(A_j))\alpha}}\sum_{g\in \mathbb{F}_q[x]\setminus\{0\}}\frac{|\alpha_{jr}(g)|}{q^{(1-\alpha)\deg (g)}}\\
&\ll \frac{1}{q^{(n+d(A_j))\alpha}}\exp \Big(c_2\sum_{\deg P\leq r}\frac{1}{q^{(1-\alpha)\deg P}}\Big) \\
& \ll \frac{1}{q^{(n+d(A_j))\alpha}}\exp\Big(c_3\sum_{m\leq r}\frac{q^{m\alpha}}{m}\Big)\ll \frac{1}{q^{(n+d(A_j))\alpha}}\exp\Big(c_4\frac{q^{r\alpha}}{r}\Big)
\end{align*}
for constants $c_3>0$ and $c_4>0$.
Using these estimates, we get
\[
E_1\ll q^{(2-2\alpha)(n+A)}\exp \Big(c\frac{q^{r\alpha}}{r}\Big) \quad \text{ and }
\quad E_2 \ll q^{(1-\alpha )n-\alpha d(A_1)} \exp \Big(c\frac{q^{r\alpha}}{r}\Big),
\]
where $A=\max\{d(A_1), d(A_2)\}$.
Finally, we have to calculate the following sum
\[
E_3:= \sum_{f \in \mathcal{M}_{n}}\Big|\psi_{1r}^{*}(A_1f+h_1)\psi_{2r}^{*}(A_2f+h_2)-Q(r, n)\Big|.
\]
We decompose $E_3$ as
\begin{align*}
E_3 &=\sum_{f\in \mathcal{N}_r}\Big|\psi_{1r}^{*}(A_1f+h_1)\psi_{2r}^{*}(A_2f+h_2)-Q(r, n)\Big|\\
&+ \sum_{f\not\in \mathcal{N}_r}\Big|\psi_{1r}^{*}(A_1f+h_1)\psi_{2r}^{*}(A_2f+h_2)-Q(r, n)\Big|
=: E_4+E_5.
\end{align*}
Using Lemma \ref{cardinality-N_r-Q_r}, we get
\[
E_4\ll |\mathcal{N}_r|\ll
q^n\sum_{j=1}^{2}\mathbb{D}(\psi_j, 1; r, n+d(A_j)) +\frac{q^{n-r}}{r}
\] 
\noindent
We recall that if $\Re(u)\le 0, \Re(v)\le 0$, then
\begin{align}
& \label{exp} \left|\exp(u)-\exp(v)\right|\le |u-v|\quad \text{and} \\ 
& \label{logarithm} \log (1+z)=z+O(|z|^2), \quad \text{ if } |z|\le 1, |\arg(z)|\le \frac{\pi}{2} . 
\end{align}
Note that 
\begin{align*}
\log Q(r, n)=\sum_{r<\deg P\leq n}\log \bigg(1+\sum_{j=1}^{2}\sum_{m=1}^{\infty}\frac{\psi_j(P^m)-\psi_j(P^{m-1})}{q^{m\deg P}}\bigg).
\end{align*}
 Using \eqref{logarithm}, 
  \[
 \log \psi_{jr}^{*}(A_jf+h_j)= \sum_{\substack{P^m\| A_jf+h_j \\ \deg P>r}}\left(\psi_j(P^m)-1\right) +O\bigg(\sum_{\substack{P^m\| A_jf+h_j \\ \deg P>r}}|\psi_j(P^m)-1|^2\bigg).
  \] 
  \noindent
Using \eqref{exp} and \eqref{logarithm}, we have
\begin{align*}
&E_5 \ll \sum_{j=1}^{2}\sum_{f\in \mathcal{M}_{n}}\bigg|\sum_{\substack{P^m\| A_jf+h_j \\ \deg P>r}}(\psi_j(P^m)-1)-\sum_{\substack{m\deg P\leq n \\ \deg P>r}}\frac{\psi_j(P^m)-1}{q^{m\deg P}}\bigg| \\
& +\sum_{f\in \mathcal{M}_{n}}\bigg|\sum_{j=1}^{2}\sum_{\substack{m\deg P\leq n \\ \deg P>r}}\frac{\psi_j(P^m)-1}{q^{m\deg P}}-\log Q(r, n)\bigg| +O\bigg(\sum_{f\in \mathcal{M}_{n}}\sum_{\substack{P^m\| A_jf+h_j \\ \deg P>r}}|\psi_j(P^m)-1|^2\bigg)\\
&=: E_6+E_7+E_8.
\end{align*} 
\noindent
We obtain
\begin{align*}
E_8 &\ll q^n\sum_{j=1}^{2}\sum_{\substack{m\deg P\leq n+d(A_j) \\m\geq 1;  \deg P>r}} \frac{|\psi_j(P^m)-1|^{2}}{q^{m\deg P}} \ll q^n \sum_{j=1}^{2}\sum_{r<\deg P\leq n+d(A_j)}\frac{|\psi_j(P)-1|^2}{q^{\deg P}}+\frac{q^{n-r}}{r}\\
& \ll q^n \left(\mathbb{D}^2(\psi_1, 1;r, n+d(A_1))+\mathbb{D}^2(\psi_2, 1;r, n+d(A_2))\right)+\frac{q^{n-r}}{r},\\
 &E_7  = q^n \bigg|\sum_{j=1}^{2}\sum_{\substack{r<\deg P\leq n}}\frac{\psi_j(P)-1}{q^{\deg P}}+O\bigg(\sum_{\deg P>r}q^{-2\deg P}\bigg)-\sum_{j=1}^{2}\sum_{\substack{r<\deg P\leq n}}\frac{\psi_j(P)-1}{q^{\deg P}}\bigg| \\
& \ll q^n \sum_{\deg P>r}q^{-\deg P}\ll \frac{q^{n-r}}{r}.
\end{align*}
  \noindent
Following lines of proof of  Lemma \ref{useful turan-kubilius inequality} for the shifts $A_jf+h_j$, we have
\begin{align*}
E_6&\ll q^{n+d(A)} \bigg(\sum_{j=1}^{2}\sum_{\substack{m\deg P\leq n+d(A_j) \\m\geq 1; \deg P>r}}\frac{|\psi_j(P^m)-1|^2}{q^{m\deg P}}\bigg)^{1/2}\\
&\ll q^{n+d(A)} \sum_{j=1}^{2}\mathbb{D}(\psi_j, 1; r, n+d(A_j))+\frac{q^{n}}{(rq^{r})^{\frac{1}{2}}}.
\end{align*}

\noindent
Combining the above estimates, we get the theorem.

\section{Proof of Theorem \ref{main theorem 2}}
We use the functions $\psi_{jr}^* (j=1, 2)$ defined in Section $3$. Writing analogously, we get
\begin{align*}
&\frac{R_2(n, q)}{|\mathcal{P}_{n}|}-Q'(n)=Q'(r, n)\bigg(\frac{1}{|\mathcal{P}_{n}|}\sum_{P\in \mathcal{P}_{n}}\psi_{1r}(P+h_1)\psi_{2r}(P+h_2)-Q'(r)\bigg)\\
&+\frac{1}{|\mathcal{P}_{n}|}\sum_{P\in \mathcal{P}_{n}}\psi_{1r}(P+h_1)\psi_{2r}(P+h_2)\big(\psi_{1r}^{*}(P+h_1)\psi_{2r}^{*}(P+h_2)-Q'(r, n)\big).
\end{align*}

\noindent 
Observe that $Q'(r, n)\ll 1$. Therefore we have
\begin{align}
&\frac{R_2(n, q)}{|\mathcal{P}_{n}|}-Q'(n)\ll \Big|\frac{1}{|\mathcal{P}_{n}|}\sum_{P\in \mathcal{P}_{n}}\psi_{1r}(P+h_1)\psi_{2r}(P+h_2)-Q'(r)\Big| \\
&+ \frac{1}{|\mathcal{P}_{n}|}\sum_{P\in \mathcal{P}_{n}}\big|\psi_{1r}^{*}(P+h_1)\psi_{2r}^{*}(P+h_2)-Q'(r, n)\big|=: E_9+E_{10}\nonumber
\end{align}
Using Lemma \ref{Chinese remainder theorem}, we have
\begin{align*}
&\sum_{P\in \mathcal{P}_{n}}\psi_{1r}(P+h_1)\psi_{2r}(P+h_2)=\sum_{P\in \mathcal{P}_{n}}\sum_{g_1|P+h_1}\alpha_{1r}(g_1)\sum_{g_2|P+h_2}\alpha_{2r}(g_2)\\
&=\sideset{}{'}\sum_{\substack{g_j\in \mathcal{M}_{\leq n}\\ j=1, 2}}\alpha_{1r}(g_1)\alpha_{2r}(g_2)\pi_{A}(n; [g_1, g_2], M)\\
&=\sideset{}{'}\sum_{\substack{g_j \in \mathcal{M}_{\leq z}\\ j=1, 2}}\alpha_{1r}(g_1)\alpha_{2r}(g_2)\Big(\pi_{A}(n; [g_1, g_2], M)
-\frac{q^n}{n\Phi([g_1, g_2])}\Big) + \frac{q^n}{n}\sideset{}{'}\sum_{\substack{g_j\in \mathcal{M}_{\leq z}\\ j=1, 2}}\frac{\alpha_{1r}(g_1)\alpha_{2r}(g_2)}{\Phi([g_1, g_2])}\\
&+O\Bigg(\sum_{\substack{g_1\in \mathcal{M}_{\leq n}\\ z<\deg (g_2)\leq n}}|\alpha_{1r}(g_1)\alpha_{2r}(g_2)|\pi_A(n; [g_1, g_2], M)\Bigg)
\end{align*}
where $M$ is the monic polynomial satisfying $M\equiv -h_j\pmod {g_j}, j=1, 2$ and 
$0\leq \deg (M)< \deg ([g_1, g_2])$, and $\sideset{}{'}\sum$ denotes summation over $g_1, g_2$ 
satisfying $(g_1, g_2)|(h_2-h_1)$, $\alpha_{jr}, j=1, 2$ 
are as defined in section $5$ and $r\leq z\le n/4$ is to be chosen later.

\noindent
Therefore we have
\begin{align*}
&E_9\leq \frac{1}{|\mathcal{P}_{n}|}\sideset{}{'}\sum_{\substack{g_j \in \mathcal{M}_{\leq z}\\ j=1, 2}}|\alpha_{1r}(g_1)\alpha_{2r}(g_2)|\left|\pi_{A}(n; [g_1, g_2], M)-\frac{q^n}{n\Phi([g_1, g_2])}\right|\\
&+
O\Bigg(\sum_{\substack{g_1\in \mathbb{F}_q[x]\setminus\{0\}\\ \deg (g_2)>z}}\frac{|\alpha_{1r}(g_1)\alpha_{2r}(g_2)|}{\Phi([g_1, g_2])}\Bigg)+O\Bigg(\frac{1}{|\mathcal{P}_{n}|}\sum_{\substack{g_1 \in \mathcal{M}_{\leq n}\\ z<\deg (g_2)\leq n}}|\alpha_{1r}(g_1)\alpha_{2r}(g_2)|\pi_A(n; [g_1, g_2], M)\Bigg)\\
&=:E_{11}+E_{12}+E_{13}.
\end{align*}
\noindent
If $\deg(g_1)$ and $\deg(g_2)$ are both $\le z\le n/4$ then $\deg([g_1,g_2])\le n/2$
and hence we can apply \eqref{primes in AP over function field} to get
\[
 \pi(n,[g_1,g_2], M)=\frac{q^n}{n\Phi([g_1,g_2])} +O\left(\frac{q^{\frac{n}{2}}}{n}\right).
\]
Using these estimates we obtain

\begin{align*}
E_{11}&\leq \frac{q^{n/2}}{n|\mathcal{P}_{n}|}
\sum_{g_1, g_2\in \mathcal{M}_{\le z}}
|\alpha_{1r}(g_1)\alpha_{2r}(g_2)|\\
&\ll \frac{q^{n/2+2z\alpha}}{n|\mathcal{P}_{n}|}
\sum_{g_1, g_2\in \mathcal{M}}
\frac{|\alpha_{1r}(g_1)\alpha_{2r}(g_2)|}{|g_1|^\alpha |g_2|^\alpha}\\
&\ll \frac{q^{n/2+2z\alpha}}{n|\mathcal{P}_{n}|}
\prod_{1\le j\le 2}
\left(\sum_{g_j\in \mathcal{M}}\frac{|\alpha_{jr}(g_j)|}{|g_j|^\alpha}\right)
\ll \frac{q^{n/2+2z\alpha}}{n|\mathcal{P}_{n}|}
\exp\Big(2c_3\frac{q^{r(1-\alpha)}}{r}\Big).
\end{align*}
Note that, trivially $\pi(n, [g_1,g_2], M) \ll \left(\frac{q^n}{|[g_1,g_2]|}+1\right)$.
Therefore
\begin{align*}
E_{13}&\ll \frac{1}{|\mathcal{P}_{n}|}
\sum_{\substack{g_1\in \mathcal{M}_{\leq n}\\ z<\deg (g_2)\leq n}}
|\alpha_{1r}(g_1)\alpha_{2r}(g_2)|\Big(\frac{q^n}{|[g_1, g_2]|}+1\Big)\\
&\ll \frac{q^{n-\alpha z}}{|\mathcal{P}_{n}|}r^{c_1}
\exp\left(c_4\frac{q^{\alpha r}}{r}\right)
+ \frac{q^{2n \alpha}}{|\mathcal{P}_{n}|}
\exp\Big(c_5\frac{q^{r(1-\alpha)}}{r}\Big)
\end{align*}
Similarly we have
\begin{align*}
E_{12}\ll q^{-\alpha z}r^{c_1}\exp\Big(c_6\frac{q^{\alpha r}}{r}\Big)\ll q^{-\alpha z}\exp\Big(c_7\frac{q^{\alpha r}}{r}\Big). 
\end{align*}
\noindent
Therefore  we write
\begin{align*}
E_{10}&= \frac{1}{|\mathcal{P}_{n}|}\sum_{P\in \mathcal{Q}_{r}}\big|\psi_{1r}^{*}(P+h_1)\psi_{2r}^{*}(P+h_2)-Q'(r, n)\big|\\
&+ \frac{1}{|\mathcal{P}_{n}|}\sum_{P\not\in \mathcal{Q}_{r}}\big|\psi_{1r}^{*}(P+h_1)\psi_{2r}^{*}(P+h_2)-Q'(r, n)\big|=:\frac{1}{|\mathcal{P}_{n}|}(E_{14}+E_{15}).
\end{align*}
Using Lemma \ref{cardinality-N_r-Q_r}, we get
\[
 E_{14}\ll |\mathcal{Q}_r|
 \ll |\mathcal{P}_{n}|\sum_{j=1}^{2}\mathbb{D}(\psi_j, 1; r, n)
  +\frac{|\mathcal{P}_{n}|}{rq^r}+\frac{|\mathcal{P}_{n}|} {q^{\frac{n}{4}}}.
\]
\noindent
Using \eqref{exp} and \eqref{logarithm}, we have
\begin{align*}
\left|\psi_{1r}^{*}(P+h_1)\psi_{2r}^{*}(P+h_2)-Q'(r, n)\right|&\leq \bigg|\sum_{j=1}^{2}\sum_{\substack{Q^k\| P+h_j\\ \deg Q>r}}\left(\psi_j(Q^k)-1\right)-\log Q'(r, n)\bigg|\\
&+ O\bigg(\sum_{j=1}^{2}\sum_{\substack{Q^k\| P+h_j\\ \deg Q>r}}\left|\psi_j(Q^k)-1\right|^2\bigg). 
\end{align*}
Therefore we get
\begin{align*}
 & E_{15}\leq \sum_{P\not\in \mathcal{Q}_{r}}
\Bigg|
\sum_{j=1}^2\sum_{\substack{Q^{k}\| P+h_j\\ \deg Q>r}}
(\psi_j(Q^{k})-1)-\log Q'(r,n)
\Bigg|+O\Bigg(\sum_{P\not\in \mathcal{Q}_{r}}\sum_{j=1}^2
\sum_{\substack{Q^k\| P+h_j\\ \deg Q>r}}\left|\psi_j(Q^k)-1\right|^2\Bigg)
\\
& \le \sum_{P\not\in \mathcal{Q}_{r}}
\Bigg|
\sum_{j=1}^2\sum_{\substack{Q^{k}\| P+h_j\\ \deg Q>r}}
(\psi_j(Q^{k})-1)-\sum_{j=1}^2\sum_{\substack{k\ge 1 \\ r<\deg(Q)\le n/k}}
\frac{\psi_j(Q^k)-1}{|Q^k|}
\Bigg|\\
&+\sum_{P\not\in \mathcal{Q}_{r}}
\Bigg|\sum_{j=1}^2\sum_{\substack{k\ge 1 \\ r<\deg(Q)\le n/k}}
\frac{\psi_j(Q^k)-1}{|Q^k|}
-\log Q'(r,n)
\Bigg|
+O\Bigg(\sum_{P\not\in \mathcal{Q}_{r}}\sum_{j=1}^2
\sum_{\substack{Q^k\| P+h_j\\ \deg Q>r}}\left|\psi_j(Q^k)-1\right|^2\Bigg)\\
& = E_{18}+E_{19}+E_{20}.
\end{align*}
Applying Lemma \ref{useful turan-kubilius over primes} with $a(Q^k)=\psi_j(Q^k)-1$ for 
$Q\in \mathcal{P}$ and $\deg(Q)>r$, we get
\[
E_{18}\ll |\mathcal{P}_{n}|\bigg(\sum_{\substack{k\deg Q\leq n\\k\geq 1; \deg Q>r}}
\frac{|\psi_j(Q^k)-1|^2}{\Phi(Q^k)}\bigg)^{\frac{1}{2}}\ll |\mathcal{P}_{n}| 
\sum_{j=1}^{2}\mathbb{D}(\psi_j, 1; r, n)
+\frac{|\mathcal{P}_{n}|}{(rq^r)^{\frac{1}{2}}}.
\]
Observe that
\begin{align}
\label{definition of P(r, n)} 
Q'(r, n)=\prod_{r<\deg P\leq n}\bigg(1-\frac{2}{\Phi(P)}+\sum_{k=1}^{\infty}\frac{\psi_1(P^k)+\psi_2(P^k)}{q^{k\deg P}}\bigg).
\end{align}
It is easy to see that
\[
 \log Q'(r,n)=\sum_{\substack{Q\in \mathcal{P} \\ r<\deg(Q)\le n}}
 \left(-\frac{2}{\phi(Q)}+\sum_{j=1}^2 \sum_{m=1}^{\infty}\frac{\psi_j(Q^m)}{|Q^m|}\right)
 +O\left(\frac{1}{rq^r}\right)
\]
and consequently
\[
E_{19}\ll \frac{|\mathcal{P}_{n}|}{rq^r}.
\]
In order to estimate $E_{20}$, we first consider the sum corresponding to $k=1$ and a fixed 
$j\in \{1,2\}$ which is as follows.
\begin{align*}
& \sum_{P\not\in \mathcal{Q}_r}\sum_{\substack{Q\| P+h_j \\ \deg(Q)>r}}|1-\psi_j(Q)|^2 \\
&\le \sum_{\substack{Q\in\mathcal{P} \\ r<\deg(Q)\le n}}|1-\psi_j(Q)|^2 \pi(n, Q, -h_j)\\
&\le \sum_{\substack{Q\in\mathcal{P} \\ r<\deg(Q)\le n/2}}|1-\psi_j(Q)|^2 \pi(n, Q, -h_j)
+\sum_{\substack{Q\in\mathcal{P} \\ n/2<\deg(Q)\le n}}
\frac{|1-\psi_j(Q)|}{\phi(Q)^{1/2}}\phi(Q)^{1/2} \pi(n, Q, -h_j)\\
&\ll \frac{q^n}{n}\sum_{\substack{Q\in\mathcal{P} \\ r<\deg(Q)\le n/2}}
\frac{|1-\psi_j(Q)|^2}{\phi(Q)}
+\Bigg( \sum_{\substack{Q\in\mathcal{P} \\ n/2<\deg(Q)\le n}}
\frac{|1-\psi_j(Q)|^2}{\phi(Q)}\Bigg)^{1/2}(\Theta(n))^{1/2}
\end{align*}

Similar to estimation of $E_{14}$, we get
\[
E_{20}\ll |\mathcal{P}_{n}| \sum_{j=1}^{2}\mathbb{D}(\psi_j, 1; r, n)+\frac{|\mathcal{P}_{n}|}{rq^r}+\frac{|\mathcal{P}_{n}|}{q^{\frac{n}{4}}}. 
\]
\noindent
Choosing $z=A\log_q n$, $A>0$ and combining all these estimates, we get the required theorem.

\section{Proof of Theorem \ref{theorem for hayes pretentious} and Corollary \ref{corollary on different functions}}
   \subsubsection*{Proof of Theorem \ref{theorem for hayes pretentious}}
   Given $f\in \mathcal{M}_n$, define $f_1, f_2\in \mathbb{F}_q[x]\setminus \{0\}$ such that 
    \[
    f_1|(f+h_1)\, \,  \text{ and } \,\,\rad(f_1)|Q_1, \, \left(\frac{f+h_1}{f_1}, Q_1\right)=1
    \] and 
    \[
    f_2|(f+h_2) \,\,\text{ and } \,\, \rad(f_2)|Q_2,\, \left(\frac{f+h_2}{f_2}, Q_2\right)=1.
    \] 
    Note that $(f_1, f_2)|(h_1-h_2).$ So, we can write
    \[
    f+h_1=G\frac{f_1f_2}{(f_1, f_2)}+D_1f_1 \text{ and } f+h_2=G\frac{f_1f_2}{(f_1, f_2)}+D_2 f_2 
    \]
    such that $D_1f_1-D_2f_2=h_1-h_2$, where $D_1$ and $D_2$ are polynomials that depends on $f_1$ and $f_2$.
    
    \vspace{2mm}
    \noindent   
Therefore, using multiplicativity of  $\psi_j$'s, we obtain
    \begin{align*}
    &\sum_{f\in \mathcal{M}_n}\psi_1(f+h_1)\psi_2(f+h_2)\\
    &=\sum_{\substack{\rad(f_1)| Q_1\\ \rad(f_2)| Q_2}}\psi_1(f_1)\psi_2(f_2)\sideset{}{^*}\sum_{d(G)=n-d([f_1, f_2])}\psi_1\left(G\frac{f_2}{(f_1, f_2)}+ D_1\right)\psi_2\left(G\frac{f_1}{(f_1, f_2)}+ D_2\right),
    \end{align*}
    where the sum $\sum^{*}$ varies over the polynomials $G$ such that 
    \begin{align}\label{1st condition on sum}
    \left(G\frac{f_2}{(f_1, f_2)}+ D_1, Q_1\right)=1, \quad \left(G\frac{f_1}{(f_1, f_2)}+ D_2, Q_2\right)=1,
    \end{align}
  
 \noindent
   Define multiplicative functions $\widetilde{\psi}_j, j=1, 2$ by
    \[
     \widetilde{\psi}_j(P^k)= \left\{
	\begin{array}
	[c]{ll}
	\psi_j(P^k) & \text{if}\; \, P\nmid Q_j,\\
	0 & \text{otherwise}.
	\end{array}
	\right.
    \]
    This gives us 
    \begin{align*}
    & \sideset{}{^*}\sum_{d(G)=n-d([f_1, f_2])}\psi_1\left(G\frac{f_2}{(f_1, f_2)}+ D_1\right)\psi_2\left(G\frac{f_1}{(f_1, f_2)}+D_2\right)\\
    &=\sum_{d(G)=n-d([f_1, f_2])}\widetilde{\psi}_1\left(G\frac{f_2}{(f_1, f_2)}+ D_1\right)\widetilde{\psi}_2\left(G\frac{f_1}{(f_1, f_2)}+ D_2\right).
    \end{align*}
    Now we write $\widetilde{\psi}_j$ as 
    \[
    \widetilde{\psi}_j(f)=\chi_j(f) \psi'_j(f), \quad j=1, 2,
    \]
    where $\chi_j$ are the Dirichlet characters in the hypothesis. Then $\chi_1 \chi_2$ is a Dirichlet character modulo $[Q_1, Q_2]$.
    
    \vspace{2mm}
    \noindent
In the above sum we write  
  \[
    G=g[Q_1, Q_2]+h,
    \]
     where $h$ runs over residue classes modulo $[Q_1, Q_2]$. From the Hypothesis, we have that $d(f+h)=d(f)$ for sufficiently large degree of $f$.  Therefore the above sum becomes
    \begin{align*}
    &e^{2\pi i (\theta_1+\theta_2)n}e_{\theta_1}\left(\frac{f_2}{(f_1, f_2)}\right)e_{\theta_2}\left(\frac{f_1}{(f_1, f_2)}\right)e_{-(\theta_1+\theta_2)}([f_1, f_2])\\
    &\times \sum_{h([Q_1, Q_2])}\chi_1\left(\frac{hf_2}{(f_1, f_2)}+ D_1\right)\chi_2\left(\frac{hf_1}{(f_1, f_2)}+D_2\right)\\
  &  \times \sum_{\substack{d(g)=n-d([f_1, f_2])\\-d([Q_1, Q_2])}}(\psi'_1e_{-\theta_1})\left(\frac{gf_2[Q_1, Q_2]}{(f_1, f_2)}+\frac{hf_2}{(f_1, f_2)} + D_1\right)(\psi'_2e_{-\theta_2})\left(\frac{gf_1[Q_1, Q_2]}{(f_1, f_2)}+\frac{hf_1}{(f_1, f_2)} + D_2\right)
    \end{align*}
 
   \noindent
 We apply Theorem \ref{main theorem 1} to the innermost sum with the condition that
    \begin{align}\label{det. cond.}
    \Delta=\frac{[Q_1, Q_2]}{(f_1, f_2)}(h_1-h_2).
    \end{align}
    Since the inner sum does not depend on the residue classes modulo $[Q_1, Q_2]$, so upto a small error of $o(1)$, it is equal to 
    \begin{align*}
    \frac{1}{|[Q_1, Q_2]|}\sum_{d(G)=n-d([f_1, f_2])}(\psi'_1 e_{-\theta_1})\left(G\frac{f_2}{(f_1, f_2)}+ D_1\right)(\psi'_2e_{-\theta_2})\left(G\frac{f_1}{(f_1, f_2)}+ D_2\right).
    \end{align*}

    \vspace{2mm}
    \noindent
    Gathering these estimates, we conclude that
    \begin{align*}
    &\sum_{f\in \mathcal{M}_n}\psi_1(f+h_1)\psi_2(f+h_2)=e^{2\pi i (\theta_1+\theta_2)n}\frac{1}{|[Q_1, Q_2]|}\\&\times \sum_{\substack{\rad(f_1)| Q_1\\ \rad(f_2)| Q_2}}\psi_1(f_1)\psi_2(f_2)e_{\theta_1}\left(\frac{f_2}{(f_1, f_2)}\right)e_{\theta_2}\left(\frac{f_1}{(f_1, f_2)}\right)e_{-(\theta_1+\theta_2)}([f_1, f_2])\\
    &\times \sum_{h([Q_1, Q_2])}\chi_1\left(\frac{hf_2}{(f_1, f_2)}+D_1\right)\chi_2\left(\frac{hf_1}{(f_1, f_2)}+ D_2\right)\\
 &   \times \sum_{d(G)=n-d([f_1, f_2])}(\psi'_1e_{-\theta_1})\left(G\frac{f_2}{(f_1, f_2)}+ D_1\right)(\psi'_2e_{-\theta_2})\left(G\frac{f_1}{(f_1, f_2)}+ D_2\right).
    \end{align*}

    \noindent
   Using Lemma \ref{Single character sum} and Lemma \ref{Double character sum}, the character sum
   \begin{align}\label{DCS}
   \sum_{h([Q_1, Q_2])}\chi_1\left(\frac{hf_2}{(f_1, f_2)}+D_1\right)\chi_2\left(\frac{hf_1}{(f_1, f_2)}+ D_2\right)
   \end{align}
   vanishes unless $\frac{Q_2}{(Q_1, Q_2)}\mid  \frac{f_1}{(f_1, f_2)}$ and $\frac{Q_1}{(Q_1, Q_2)}\mid  \frac{f_2}{(f_1, f_2)}$. 
   
   \vspace{2mm}
   \noindent
Observe that the hypothesis $\mathbb{D}(\psi_j, \chi_j e_{\theta_j} ; \infty)<\infty$ implies that \[
    \mathbb{D}(\psi'_je_{-\theta_j}, 1; \infty)<\infty.
    \]
We use Theorem \ref{main theorem 1} to the above innermost sum 
   to conclude the proof.
   
   \subsubsection*{Proof of Corollary \ref{corollary on different functions}} In this case, we have $(f_1, f_2)=1$. From \eqref{DCS}, \begin{align*}
   \sum_{h([Q_1, Q_2])}\chi_1\left(hf_2+D_1\right)\chi_2\left(hf_1+ D_2\right)
   \end{align*}
   vanishes unless 
   \[
  U= f_1=\frac{Q_2}{(Q_1, Q_2)}\,  \text{ and } \, V=f_2=\frac{Q_1}{(Q_1, Q_2)}.
   \] 

\section{Proof of Theorem \ref{main theorem on hayes pretentiousness}}
  Recall that 
  \begin{align*}
 \mathcal{M}_n=\bigsqcup_{B\in \mathcal{B}}I(B; n-l),
 \end{align*}
 where 
 \[
 \mathcal{B}=\{B=t^n+b_{n-1}t^{n-1}+\ldots + b_{n-l}t^{n-l}: \, b_j\in \mathbb{F}_q\}.
 \]
 So, we have
   \[
    \sum_{f\in \mathcal{M}_n}\psi(f+h_1)\overline{\psi(f+h_2)}=\sum_{B\in \mathcal{B}}\sum_{f\in I(B; n-l)}\psi(f+h_1)\overline{\psi(f+h_2)}.
   \]
   We see that if $f\in I(B; n-l)$ then we have $f+h_j\in I(B; n-l)$, since $\deg(h_j)\leq l$ for all $j=1, 2$. Therefore 
   \[
   f+h_j\in I(B; n-l)\Longrightarrow \xi(f+h_j)=\xi(B).
   \]
   This gives us
   \begin{align*}
   \sum_{B\in \mathcal{B}}\sum_{f\in I(B; n-l)}\psi(f+h_1)\overline{\psi(f+h_2)}&=\sum_{B\in \mathcal{B}}\xi(B)\overline{\xi(B)}\sum_{f\in I(B; n-l)}(\psi \overline{\xi})(f+h_1)\overline{(\psi \overline{\xi})(f+h_2)}\\
   &=\sum_{B\in \mathcal{B}}\sum_{f\in I(B; n-l)}(\psi \overline{\xi})(f+h_1)\overline{(\psi \overline{\xi})(f+h_2)}\\
   &=\sum_{f\in \mathcal{M}_n}(\psi \overline{\xi})(f+h_1)\overline{(\psi \overline{\xi})(f+h_2)}.
   \end{align*}
 The hypothesis $
   \mathbb{D}(\psi, \chi \xi e_{\theta}; \infty)<\infty,
   $ implies that $\mathbb{D}(\psi \overline{\xi}, \chi e_{\theta}; \infty)<\infty$. Hence we can apply Theorem \ref{theorem on Dirichlet character} to conclude the proof.  
  
\section{Proof of Theorem \ref{theorem on Dirichlet character} and Corollary \ref{main theorem on hayes character}}

\subsubsection*{Proof of Theorem \ref{theorem on Dirichlet character}}
 We follow the arguments of proof of the Theorem \ref{theorem for hayes pretentious} to obtain
  \begin{align*}
    &\sum_{f\in \mathcal{M}_n}\psi(f+h_1)\overline{\psi(f+h_2)}=\frac{1}{|Q|}\sum_{\substack{\rad(f_1)| Q\\ \rad(f_2)| Q\\ (f_1, f_2)\mid \, \Delta}}\psi(f_1)\overline{\psi(f_2)}e_{-\theta}\left(\frac{f_2}{(f_1, f_2)}\right)e_{\theta}\left(\frac{f_1}{(f_1, f_2)}\right)\\
    &\times \sum_{h(Q)}\chi\left(h\frac{f_2}{(f_1, f_2)}+D_1\right)\overline{\chi\left(h\frac{f_1}{(f_1, f_2)}+ D_2\right)}\\
    &\times \sum_{d(G)=n-d([f_1, f_2])}(\psi \overline{\chi}e_{-\theta})\left(G\frac{f_2}{(f_1, f_2)}+ D_1\right)(\overline{\psi}\chi e_{\theta})\left(G\frac{f_1}{(f_1, f_2)}+ D_2\right),
    \end{align*} 
    where  $D_1, D_2$ are polynomials depending on $f_1$ and $f_2$ such that $D_2f_2-D_1f_1=\Delta$.\\
    Now we have to estimate 
    \[
    T(Q):= \sum_{h(Q)}\chi\left(h\frac{f_2}{(f_1, f_2)}+D_1\right)\overline{\chi\left(h\frac{f_1}{(f_1, f_2)}+ D_2\right)}.
    \]
    By Chinese remainder theorem on $\mathbb{F}_q[x]$, we have
    \[
    T(Q)=\prod_{P^k\parallel Q}\sum_{h(P^k)}\chi_{P^k}(hf_2+D_1)\overline{\chi_{P^k}(hf_1+D_2)},
    \]
    where $\chi_{P^k}$ is a primitive Dirichlet character of conductor $P^k$.
    
    \vspace{2mm}
    \noindent
    We claim that $T(Q)$ vanishes when $f_1\neq f_2$. In this case, there exists a irreducible polynomial say $P\in \mathbb{F}_q[x]$ such that $Pi^\Vert f_1$ and $P^j\vert f_2$ with $j>i$. Then $\left(\frac{f_1}{(f_1, f_2)}, P\right)=1$. Therefore using the change of variable 
    \[
    h \longmapsto h\frac{f_1}{(f_1, f_2)} \pmod{P^k}
    \]
    the inner sum of $T(Q)$ becomes
    \[
    S(P)=\sum_{h(P^k)}\chi_{P^k}(hP^{j-i}t+D_1')\overline{\chi_{P^k}}(h+D_2'),
    \]
    where $(t, P)=1$ and for polynomials $D_j'\in \mathbb{F}_q[x]$, $j=1, 2$. If $j-i\geq k$ then first term of the sum $S(P)$ is fixed and the second term runs over all residue classes modulo $P^k$, which leads the sum to be zero.
    
    \vspace{2mm}
    \noindent
    Let us assume that $j-i<k$.  We write 
    \[
    h=H+P^{k-(j-i)}F,
    \]
    where $H$ runs over all residue classes modulo $P^{k-(j-i)}$ and $L$ over residue classes modulo $P^{j-i}$. Thus, we obtain
    \[
    S(P)=\sum_{H(P^{k-(j-i)})}\chi_{P^k}(HP^{j-i}t+D_1')\sum_{L(P^{j-i})}\overline{\chi_{P^k}(H+P^{k-(j-i)}F+D_2')}.
    \]
   Applying Lemma \ref{Basic lemma}, we say that the inner sum of $S(P)$ vanishes. Hence, $f_1=f_2=f$ (say). Therefore, we conclude that 
   \begin{align*}
   &\sum_{f\in \mathcal{M}_n}\psi(f+h_1)\overline{\psi(f+h_2)}=\frac{1}{|Q|}\sum_{\substack{\rad(f)|\, Q\\ f\mid \, \Delta}}|\psi(f)|^2\\
    &\times \sum_{h(Q)}\chi\left(h+D_1\right)\overline{\chi\left(h+ D_2\right)}\\
    &\times \sum_{d(G)=n-d(f)}(\psi \overline{\chi} e_{-\theta})\left(G+ D_1\right)(\overline{\psi}\chi e_{\theta})\left(G+ D_2\right),
    \end{align*} 
    where  $D_1, D_2$ are the polynomials depend on $f_1$ and $f_2$ such that $D_2f_2-D_1f_1=\Delta$.\\
   We use Theorem \ref{main theorem 1} to the innermost sum to conclude the proof.

\subsubsection*{Proof of Corollary \ref{main theorem on hayes character}} We apply $h_1=0$ and $h_2=1$ to the Theorem \ref{main theorem on hayes pretentiousness}. The conditions $f\mid \Delta$ and $(D_1-D_2)f=\Delta$ implies that 
$f=1$ and $D_2-D_1=1$. Also $\deg(D_j)\leq 0$ forces $D_2=1$ and $D_1=0$. Therefore,
    \begin{align*}
    S(P)=\sum_{h(P^k)}\chi_{P^k}(h)\overline{\chi_{P^k}}(h+1)=\left\{
	\begin{array}
	[c]{ll}
	 -1 & \text{ if }\;  k=1\\
 0 & \text{ if }\,  k\geq 2.
	\end{array}
	\right.
    \end{align*}
    This yields $T(Q)=\mu(Q)$, which concludes the proof.

  \section{Proof of Theorem \ref{Katai conjecture} and Corollary \ref{Katai conjectiure for pair}}
 \subsubsection*{Proof of Theorem \ref{Katai conjecture}}
  Let us consider
     \[
     \Delta \psi(f)=\psi(f+1)-\psi(f).
     \]
     Using Hypothesis we observe that  
     \begin{align}\label{main observation}
     \sum_{f\in \mathcal{M}_{\leq N}}\frac{|\Delta \psi(f)|^2}{|f|}\leq \sum_{f\in \mathcal{M}_{\leq N}}\frac{2|\Delta \psi(f)|}{|f|}= 2\sum_{n\leq N}\frac{1}{q^n}\sum_{f\in \mathcal{M}_n}|\Delta \psi(f)|=o(N).
     \end{align}
     \noindent
     \textbf{Step $1$:} We first show that if for some $0<\epsilon <1$,
     \begin{align}\label{mean square bound}
     \sum_{f\in \mathcal{M}_{\leq N}}\frac{|\Delta \psi(f)|^2}{|f|}\leq 2(1-\epsilon)N
     \end{align}
     holds then there exists a primitive character $\chi$, a short interval character $\xi$ and an angle $\theta\in [0, 1]$ such that 
     \[
     \mathbb{D}\left(\psi(f), \chi(f)\xi(f) e_{\theta}(f); \infty\right)< \infty.
     \]
     To prove the claim we start by writing
     \[
     \Re (\psi(f)\overline{\psi(f+1)})=1-\frac{|\Delta \psi(f)|^2}{2}
     \]
     so that \eqref{mean square bound} gives
     \[
     \sum_{f\in \mathcal{M}_{\leq N}}\frac{\Re (\psi(f)\overline{\psi(f+1)})}{|f|}\geq \epsilon N.
     \]
     We can apply the Theorem $1.5$ of \cite{KMT} to deduce that for every sufficiently large $N$, there exist a Dirichlet character $\chi_N$ of bounded modulus, a short interval character $\xi_N$ of bounded length, and an angle $\theta_N\in [0, 1)$ such that 
  \[
  \mathbb{D}(\psi, \chi_N \xi_N e_{\theta_N}; N)\ll 1.
  \]
  Following the argument in page $54$ of \cite{KMT}, we can conclude that uniformly in $N$,
  \[
  \mathbb{D}(\psi, \chi \xi e_{\theta}; N)\ll 1,
   \]
     which establishes the claim.
     
     \vspace{2mm}
     \noindent
     \textbf{Step $2$:} We now show that if $\mathbb{D}\left(\psi(f), \chi(f)\xi(f) e_{\theta}(f); \infty\right)< \infty$ for a primitive Dirichlet character $\chi$ of modulus $Q\in \mathbb{F}_q[x]$, a short interval character of length $l\geq 1$ and an angle $\theta\in [0, 1]$ then
     \[
     \sum_{f\in \mathcal{M}_{\leq N}}\frac{|\Delta \psi(f)|^2}{|f|}=2\left(1- \mathbb{E}(\psi)+o(1)\right)N, 
     \]
     where 
     \[
     \mathbb{E}(\psi)=\frac{\mu(Q)}{|Q|}\prod_{P\nmid Q}\bigg(2\left(1-\frac{1}{|P|}\right)\bigg(\sum_{m=0}^{\infty}\frac{\Re\left((\psi\chi\xi e_{\theta})(P^m)\right)}{|P|^m}\bigg)-1\bigg).
     \]
     This step easily follows from Corollary \ref{main theorem on hayes character} together with the estimate that
     \[
      \sum_{f\in \mathcal{M}_{\leq N}}\frac{|\Delta \psi(f)|^2}{|f|}=2\bigg(\sum_{f\in \mathcal{M}_{\leq N}}\frac{1-\Re (\psi(f)\overline{\psi(f+1)})}{|f|}\bigg).
     \]
     \noindent
     \textbf{Step $3$:} Combining Step $1$, Step $2$, and observation \eqref{main observation}, we must have that
     $\mathbb{E}(\psi)=1$. From this condition we have to find out the the desired form of the  function $\psi$.\\
     Observe that the euler factor 
     \[
     2\left(1-\frac{1}{|P|}\right)\bigg(\sum_{m=0}^{\infty}\frac{\Re\left((\psi\overline{\chi}\overline{\xi} e_{-\theta})(P^m)\right)}{|P|^m}\bigg)-1\geq \frac{|P|-4}{|P|}\geq -1,
     \]
     where the equality holds only when $|P|=2$ $\left(\deg(P)=\lfloor \frac{\log 2}{\log q}\rfloor\right)$ and also
     \[
      2\left(1-\frac{1}{|P|}\right)\bigg(\sum_{m=0}^{\infty}\frac{\Re\left((\psi\overline{\chi}\overline{\xi} e_{-\theta})(P^m)\right)}{|P|^m}\bigg)-1\leq 2\left(1-\frac{1}{|P|}\right)\left(\sum_{m=0}^{\infty}\frac{1}{|P|^m}\right)-1\leq 1. 
     \]
     Therefore we must have $Q\in \mathbb{F}_q^{*}$ and for all $P\in \mathcal{P}$, 
     \[
     2\left(1-\frac{1}{|P|}\right)\bigg(\sum_{m=0}^{\infty}\frac{\Re\left((\psi\overline{\xi} e_{-\theta})(P^m)\right)}{|P|^m}\bigg)-1=1,
     \]
     which is possible if and only if $\psi(P^m)=\xi(P^m)e_{\theta}(P^m)$ for all $m\geq 1$.

    \subsubsection*{Proof of Corollary \ref{Katai conjectiure for pair}}
     We can write the hypothesis as
    \begin{align}\label{difference between two functions on average}
    \sum_{f\in \mathcal{M}_n}\frac{|\psi(f+1)-\eta(f)|}{|f|}\to 0, \quad \text{ as } n\to \infty.
    \end{align}
    For any $A\in \mathcal{M}$, we consider $h=A(f+1)-1$. So \eqref{difference between two functions on average} implies that
    \begin{align}\label{shifting difference on average}
     \sum_{h\in \mathcal{M}_{n+\deg(A)}}\frac{|\psi(h+1)-\eta(h)|}{|h|}\to 0, \quad \text{ as } n\to \infty.
    \end{align}
    On the other hand, for $A\in \mathcal{M}$, we also have
    \begin{align*}
    \sum_{f\in \mathcal{M}_n}\frac{\bigg|\frac{\psi(A(f+1))}{\psi(A)}-\eta(f)\bigg|}{|f|}\to 0, \quad \text{ as } n\to \infty,
    \end{align*}
    which turns into
     \begin{align}\label{shifting difference on average 2}
    \sum_{f\in \mathcal{M}_n}\frac{|\psi(A(f+1))-\psi(A)\eta(f)|}{|f|}\to 0, \quad \text{ as } n\to \infty.
    \end{align}
   Also we write \eqref{shifting difference on average} as
    \begin{align}\label{shifting difference on average 3}
     \sum_{f\in \mathcal{M}_n}\frac{|\psi(A(f+1))-\eta(A(f+1)-1)|}{|Af|}\to 0,\quad \text{ as } n\to \infty.
    \end{align}
    From \eqref{shifting difference on average 2} and \eqref{shifting difference on average 3}, we obtain
    \begin{align}\label{shifting difference on average 4}
    \sum_{f\in \mathcal{M}_n}\frac{|\psi(A)\eta(f)-\eta(A(f+1)-1)|}{|f|}\to 0,\quad \text{ as } n\to \infty.
    \end{align}
    We use change of variable $f=(A-1)Bg$ for some $B\in \mathcal{M}$ so that $\deg(g)=\deg(f)-\deg(A)-\deg(B)$. Let $k=n-\deg(A)-\deg(B)$.
    Therefore, \eqref{shifting difference on average 4} becomes
    \begin{align*}
    \sum_{f\in \mathcal{M}_k}\frac{|\psi(A)\eta(A-1)\eta(B)\eta(g)-\eta(A-1)\eta(ABg+1)|}{|g|}\to 0,\quad \text{ as } k\to \infty,
    \end{align*}
    which implies that 
     \begin{align}\label{shifting difference on average 5}
    \sum_{f\in \mathcal{M}_k}\frac{|\psi(A)\eta(B)\eta(g)-\eta(ABg+1)|}{|g|}\to 0,\quad \text{ as } k\to \infty.
    \end{align}
    From the symmetry in the $A$ and $B$, we also have
     \begin{align}\label{shifting difference on average 6}
    \sum_{f\in \mathcal{M}_k}\frac{|\psi(B)\eta(A)\eta(g)-\eta(ABg+1)|}{|g|}\to 0,\quad \text{ as } k\to \infty.
    \end{align}
    Therefore, \eqref{shifting difference on average 5} and \eqref{shifting difference on average 6} give us 
    \[
    \psi(A)\eta(B)=\psi(B)\eta(A).
    \]
    The function $H: \mathcal{M}\to \mathbb{S}^1$ defined by 
    \[
    H=\frac{\psi}{\eta}
    \]
     such that $H(A)=H(B)$ for all $A, B\in \mathcal{M}$. This leads us to conclude that $H$ is constant on $\mathcal{M}$. Then the complete multiplicativity of $H$ implies that $H=1$, which gives $\psi=\eta$. 
     The rest the proof follows from Theorem \ref{Katai conjecture}.
     
     \section{Proof of Corollary \ref{trancated liouville function}}
\noindent
We choose $r=y$ and $\psi_j=\lambda_y$, $j=1, 2$. Let $\alpha_j=\mu*\lambda_y$, $j=1, 2$.
On the basis of this choice, we find that $\mathbb{D}\left(\lambda_y, 1; r, n\right)=0$ and 
\[
\alpha_{j}(P^{t})=
\begin{cases}
2(-1)^{t}  \ & \text{if } \deg P\leq y \\
0 \ & \text{if } \deg P>y .
\end{cases} 
\]
We use Theorem \ref{main theorem 1} to obtain
\begin{align*}
\frac{1}{q^n}\bigg|\sum_{f\in \mathcal{M}_{n}}\lambda_y(f)\lambda_y(f+h)\bigg|\leq |Q(n)|+ O\bigg((yq^y)^{-\frac{1}{2}}+q^{(1-2\alpha)n}\exp\left(\frac{cq^{\alpha y}}{y}\right)\bigg),
\end{align*}
where $Q(n)$ is defined by \eqref{main term over monics introduction}.
Since $\deg (h)\leq y$ then we have
\[
Q(n)=\prod_{\deg P\leq y}\underset{\substack{\left(P^{m_1},P^{m_2}\right) |h}}{\sum_{\substack{m_{1}=0 }} ^{\infty}\sum_{\substack{m_{2}=0}}^{\infty}}\frac{\alpha_1(P^{m_1})\alpha_2(P^{m_2})}{q^{[m_1, m_2]\deg P}}=\prod_{\deg P\leq y}v_P.
\]
\noindent
Note that $\alpha_1=\alpha_2=\alpha_3$ (say). We define the non-negative integer $k(P)$ such that $P^{k(P)}\|h$.
For $\deg P\leq y$, we get
\begin{align*}
v_P&=\sum_{m=0}^{k(P)}\frac{\alpha_3(P^m)^2}{q^{m\deg P}}+2\sum_{m=0}^{k(P)}\alpha_3(P^m)\sum_{l=m+1}^{\infty}\frac{\alpha_3(P^l)}{q^{l\deg P}}\bigg(1+4\sum_{m=0}^{k(P)}\frac{1}{q^{m\deg P}}\bigg)+4\sum_{l=1}^{\infty}\frac{(-1)^l}{q^{l\deg P}}\\
&+ 8\sum_{m=1}^{k(P)}\frac{(-1)^{2m+1}}{q^{(m+1)\deg P}}\sum_{j=0}^{\infty}\frac{(-1)^j}{q^{j\deg P}}=1-\frac{4}{q^{k(P)\deg P}\left(q^{\deg P}+1\right)}.
\end{align*}
Finally, using Lemma \ref{some useful sums} and Lemma \ref{main lemma for TLF} and the hypothesis that $\deg (h) \leq y$, we have
\begin{align*}
Q(n)&= \prod_{\deg P\leq y}\bigg(1-\frac{4}{q^{\deg P}+1}\bigg)\prod_{\substack{\deg P\leq y\\ P|h}}\bigg(1-\frac{4}{q^{k(P)\deg P}\left(q^{\deg P}+1\right)}\bigg)\bigg(1-\frac{4}{q^{\deg P}+1}\bigg)^{-1}\\
&\leq C_1 \exp \bigg(-4\sum_{\deg P\leq y}q^{-\deg P}+4\sum_{\substack{\deg P\leq y\\ P\|h}}q^{-\deg P}\bigg)\leq C\frac{(\log y)^4}{y^4}.
\end{align*}
Using the Hypothesis that $2\leq y\leq \log n$, we have
\[
q^{(1-2\alpha)n}\exp\left(\frac{cq^{\alpha y}}{y}\right)\ll (y\log y)^{-1}.
\] 
Combining the above estimates we conclude the proof.
  
\section{Proof of Theorem \ref{distribution of additive function over function field introduction}}
\noindent
In the case of monic polynomials, the distribution function is 
\[
F_n(x):=\frac{1}{q^n} \nu_n\big\{f; \psi_1(f+h_1)+\psi_2(f+h_2)\leq x\big\}
\]
 and the corresponding characteristic function is 
 \[
 \phi_n(t)=\frac{1}{q^n}\sum_{f\in \mathcal{M}_{n}}\exp \left(it\left(\psi_1(f+h_1)+\psi_2(f+h_2)\right)\right).
 \]

\noindent
We observe that 
\begin{align*}
\sum_{P}\frac{\exp(it\psi_j(P))-1}{q^{\deg P}}=t\sum_{|\psi_j(P)|\leq 1}\frac{\psi_j(P)}{q^{\deg P}}+O\bigg(t^2 \sum_{|\psi_j(P)|\leq 1}\frac{\psi_j^2(P)}{q^{\deg P}}+\sum_{|\psi_j(P)|>1}q^{-\deg P}\bigg).
\end{align*}
Therefore from the hypothesis of the theorem, we can say that $\phi(t)$ is convergent for every real $t$. Further, the infinite product $\phi(t)$ is continuous at $t=0$ because it converges uniformly for $|t|\le T$ where $T>0$ is arbitrary.

\noindent
Also, for $j=1, 2$, we have
\begin{align*}
\mathbb{D}(\psi_j(P), 1;  \infty)\ll t^2 \sum_{|\psi_j)(P)|\leq 1}\frac{\psi_j^2(P)}{q^{\deg P}}+\sum_{|\psi_j(P)|>1}q^{-\deg P}.
\end{align*}

\vspace{2mm}
\noindent
So, using the hypothesis of the theorem we see that $\psi_j$ is close to $1$ and choosing $r=\log n$ in Theorem \ref{main theorem 1} we get that the remainder term disappears when $n\to \infty.$
 
 \noindent
Thus the characteristic function $\phi_n(t)$ has the limit $\phi(t)$ for every real $t$ and this limit is continuous at $t=0.$
 Therefore by Lemma \ref{continuity theorem}, we get the required Theorem \ref{distribution of additive function over function field introduction}.

\vspace{2mm}
\noindent
In the case of monic irreducible polynomials, the distribution function is
\[
\widetilde{F}_n(x):=\frac{1}{|\mathcal{P}_{n}|} \nu_n\big\{P; \psi_1(P+h_1)+\psi_2(P+h_2)\leq x\big\}
\] 
and the corresponding characteristic function is 
 \[
 \widetilde{\phi}_n(t)=\frac{1}{|\mathcal{P}_{n}|}\sum_{P\in \mathcal{P}_{n}}\exp \left(it\left(\psi_1(P+h_1)+\psi_2(P+h_2)\right)\right).
 \]
Following a similar argument as above, we complete the proof.. 

\vspace*{2mm} 
\noindent \textbf{Acknowledgements:}  We are grateful to Oleksiy Klurman  for insightful comments and corrections. We thank the anonymous referees for their valuable comments and insightful suggestions that have improved the quality of the manuscript.

\end{document}